\newcommand{\lf}{\left\lfloor}
\newcommand{\rf}{\right\rfloor}
\newcommand{\lc}{\left\lceil}
\newcommand{\rc}{\right\rceil}
\newcommand{\Fkr}{\widetilde{F}_{k}^{(\rho)}}
\newcommand{\Fk}{F_{k}^{(\rho)}}
\newcommand{\Fkmr}{\widetilde{F}_{k-1}^{(\rho)}}
\newcommand{\Fkm}{F_{k-1}^{(\rho)}}
\newcommand{\Fkpr}{\widetilde{F}_{k+1}^{(\rho)}}
\newcommand{\Fkp}{F_{k+1}^{(\rho)}}
\newcommand{\Skr}{S_{k}^{(\rho)}}
\newtheorem{theorem}{Theorem}[section]
\newtheorem{lemma}[theorem]{Lemma}
\newtheorem{proposition}[theorem]{Proposition}
\newtheorem{corollary}[theorem]{Corollary}
\theoremstyle{definition}
\newtheorem{definition}[theorem]{Definition}
\theoremstyle{remark}
\newtheorem{remark}[theorem]{Remark}
\numberwithin{equation}{section}
\title{Nikishin systems on star-like sets: Ratio asymptotics of the associated multiple orthogonal polynomials}
\date{\today}
\author{Abey L\'{o}pez-Garc\'{i}a\footnotemark[1] \quad Guillermo L\'{o}pez Lagomasino\footnotemark[2]}
\begin{document}

\maketitle

\par{\centering Dedicated to the memory of our dear friend Valery V. Vavilov \par}

\renewcommand{\thefootnote}{\fnsymbol{footnote}}
\footnotetext[1]{Department of Mathematics and Statistics, University of South Alabama, 411 University Blvd North, Mobile AL, 36688, USA. Current address: Department of Mathematics, University of Central Florida, 4393 Andromeda Loop N, Orlando, FL 32816, USA. email: Abey.Lopez-Garcia\symbol{'100}ucf.edu.} \footnotetext[2]{Departamento de
Matem\'{a}ticas, Universidad Carlos III de Madrid, Avda.~Universidad 30, 28911 Legan\'{e}s, Madrid, Spain. email:
lago\symbol{'100}math.uc3m.es.\\ \indent Both authors were partially supported by the grant MTM2015-65888-C4-2-P of the Spanish Ministry of Economy and Competitiveness.
}

\abstract{We investigate the ratio asymptotic behavior of the sequence $(Q_{n})_{n=0}^{\infty}$ of multiple orthogonal polynomials associated with a Nikishin system of $p\geq 1$ measures that are compactly supported on the star-like set of $p+1$ rays $S_{+}=\{z\in\mathbb{C}: z^{p+1}\geq 0\}$. The main algebraic property of these polynomials is that they satisfy a three-term recurrence relation of the form $zQ_{n}(z)=Q_{n+1}(z)+a_{n} Q_{n-p}(z)$ with $a_{n}>0$ for all $n\geq p$. Under a Rakhmanov-type condition on the measures generating the Nikishin system, we prove that the sequence of ratios $Q_{n+1}(z)/Q_{n}(z)$ and the sequence $a_{n}$ of recurrence coefficients are limit periodic with period $p(p+1)$. Our results complement some results obtained by the first author and Mi\~{n}a-D\'{i}az in a recent paper in which algebraic properties and weak asymptotics of these polynomials were investigated. Our results also extend some results obtained by the first author in the case $p=2$.
\smallskip

\textbf{Keywords:} Multiple orthogonal polynomial, Nikishin system, banded
Hessenberg matrix, ratio asymptotics, interlacing of zeros.\smallskip

\textbf{MSC 2010:} Primary $42C05$, $30E10$; Secondary $47B39$.}

\tableofcontents

\section{Introduction}

This paper is a continuation of the investigations initiated in \cite{LopMin, Lop} on properties of multiple orthogonal polynomials associated with Nikishin systems of measures supported on star-like sets. Nikishin systems were introduced by Nikishin himself in his seminal work \cite{Nik}, which served as a starting point for a prolific study of the associated families of multiple orthogonal polynomials and Hermite-Pad\'{e} approximants.

There is now a rather comprehensive literature on the theory of multiple orthogonal polynomials associated with Nikishin systems \emph{on the real line}, which includes the so-called type I, type II, and mixed-type polynomials. Among the topics that have been investigated within this theory we find, e.g., strong asymptotics \cite{Apt}, ratio asymptotics \cite{AptLopRocha,LopLopratio,FidLopLopSor,FidLop}, relative asymptotics \cite{LopLoprel}, zero asymptotic distribution \cite{NikSor, GonRakSor, FidLopLopSor}, Hermite-Pad\'{e} approximation \cite{Nik,Nik2,NikSor,BusLop,DriStI,DriStII,GonRakSor, FidLopHP,LopMed}, recurrence relations \cite{AptKalLopRocha,DelLopLop}, normality and perfectness \cite{DriStnorm,FidLop}, and the list could be enlarged.

Recently, the study of Nikishin systems \emph{on star-like sets} has been motivated by the study of sequences of polynomials $(Q_{n})_{n=0}^{\infty}$ that satisfy a high order three-term recurrence relation of the form
\begin{equation}\label{hthreeterm}
z Q_{n}(z)=Q_{n+1}(z)+a_{n} Q_{n-p}(z),\qquad a_{n}>0,\quad n\geq p,
\end{equation}
where $p$ is a fixed positive integer. Early works that have investigated such recursions are those of Eiermann-Varga \cite{EierVarga} and He-Saff \cite{HeSaff} on Faber polynomials associated with hypocycloidal domains (the constant coefficient case $a_{n}=1/p$, $n\geq p$). Later, Aptekarev-Kalyagin-Van Iseghem \cite{AptKalIseg} studied \eqref{hthreeterm} under no additional hypotheses. They proved a Favard-type theorem, showing that the polynomials $(Q_{n})_{n=0}^{\infty}$ satisfying \eqref{hthreeterm} with initial conditions
\begin{equation}\label{eq:initcondrr}
Q_{\ell}(z)=z^{\ell},\qquad 0\leq \ell\leq p,
\end{equation}
are multi-orthogonal (in the same non-Hermitian sense of Definition~\ref{def:MOP} below) with respect to a system of $p$ complex measures $\mu_{0},\ldots,\mu_{p-1}$ supported on the star-like set
\[
S_{+}=\{z\in\mathbb{C}: z^{p+1}\geq 0\}.
\]
The collection $\{\mu_{0},\ldots,\mu_{p-1}\}$ can be regarded as the system of \emph{spectral measures} \cite{AptKalIseg,Kal1,Kal2} of the difference operator given in the standard basis of $l^{2}(\mathbb{N})$ by the infinite $(p+2)$-banded Hessenberg matrix
\begin{equation}\label{operator}\left(
\begin{array}{ccccccc}
0 &1&0&0&0&\ldots &\ldots\\
0 &0&1&0&0&\ldots &\ldots\\
0 &0&0&1&0&\ldots &\ldots\\
\ldots &\ldots&\ldots&\ldots&\ldots&\ldots &\ldots\\
a_p &0&0&0&0&\ldots &\ldots\\
0 &a_{p+1}&0&0&0&\ldots &\ldots\\
0 &0 & a_{p+2}&0&0&\ldots &\ldots\\
\ldots &\ldots&\ldots&\ldots&\ldots&\ldots &\ldots
\end{array}
\right).
\end{equation}

More recently, Aptekarev-Kalyagin-Saff \cite{AptKalSaff} considered strong asymptotics of polynomials $Q_{n}$ satisfying \eqref{hthreeterm}--\eqref{eq:initcondrr} under the hypothesis
\[
\sum_{n=p}^{\infty}|a_{n}-a|<\infty,\qquad a>0,
\]
and some properties of the measures $\mu_{j}$ were also deduced. In particular, for the first time a formal connection with Nikishin systems on star-like sets was established. In \cite{DelLop}, Delvaux and the first author studied in more detail properties of polynomials $Q_{n}$ satisfying \eqref{hthreeterm}--\eqref{eq:initcondrr}, analyzing them in the more general setting of Riemann-Hilbert minors (or generalized eigenvalue polynomials associated with truncations of the matrix \eqref{operator}). A variety of asymptotic and non-asymptotic results were obtained there, and in particular a connection was explicitly established between \eqref{hthreeterm} and Nikishin systems on star-like sets in the case of periodic recurrence coefficients satisfying some additional conditions, see \cite[Theorem 2.10]{DelLop}. Another paper that has recently studied \eqref{hthreeterm} in connection with the location and interlacing of the zeros of the polynomials $Q_{n}$ is Ben Romdhane \cite{Romd}.

The results mentioned so far can be regarded as direct spectral results, since they are obtained under assumptions on the recurrence coefficients. The first paper that analyzed an inverse spectral problem for \eqref{operator} was \cite{Lop}. In that work a Nikishin system consisting of two measures (case $p=2$) was considered on a $3$-star centered at the origin, and the asymptotic behavior of the associated multiple orthogonal polynomials (defined as in Definition~\ref{def:MOP} below) was studied. In particular the ratio asymptotic behavior was described under a Rakhmanov-type condition on the measures generating the Nikishin system, and it was observed that this behavior was limit periodic with period $6$, which was shomewhat of a surprise (in the analogous situation on the real line the period is $2$, cf. \cite{AptLopRocha,AptKalLopRocha}). The main goal of the present work is to obtain a generalization of this result for an arbitrary $p\geq 1$.

Our main reference in this paper will be \cite{LopMin}, where the fundamental algebraic properties of the polynomials under investigation were proved. In \cite{LopMin} the authors also described the zero asymptotic distribution of these polynomials under regularity conditions on the measures generating the Nikishin system.

The strategy that we follow in the present work to obtain our ratio asymptotic results was first used in \cite{AptLopRocha}, where an analogue of Rakhmanov's celebrated theorem on ratio asymptotics \cite{Rakh1,Rakh2} was first proved for multiple orthogonal polynomials associated with Nikishin systems on the real line. That strategy has been applied in several other papers \cite{LopLopratio,FidLopLopSor,FidLop,Lop}. It is based on the analysis of certain boundary value relations between the limiting functions and the application of asymptotic results of orthogonal polynomials on the real line with respect to varying measures. 

This paper is organized as follows. In Section~\ref{section:Pre} we define Nikishin systems on star-like sets, and reproduce those results and definitions from \cite{LopMin} that will be needed for our analysis. Only the information that is strictly relevant has been presented. In Section~\ref{section:Main} we state our main results. In Section~\ref{sec:diffZnk} we analyze the difference $Z(n+1,k)-Z(n,k)$, where the values $Z(n,k)$~\eqref{def:Znk} are the degrees of certain polynomials $P_{n,k}$ introduced in Definition~\ref{Def:polyPnk} that play a leading role in our analysis. In Section~\ref{sec:interlacing} we prove the interlacing property of the zeros of $P_{n,k}$. In the last section we analyze the ratio asymptotic behavior of the polynomials $P_{n,k}$ and prove our main asymptotic results.

\section{Preliminaries}\label{section:Pre}

In this section we describe the background material from \cite{LopMin} that is essential for the present work. We start with the definition of Nikishin systems on stars.

\subsection{Definition of Nikishin system on a star and induced hierarchy on the real line}\label{sect:defNik}

Let $p\geq 1$ be an integer, and let
\[
S_{\pm}:=\{z\in\mathbb{C}: z^{p+1}\in\mathbb{R}_{\pm}\},\qquad \mathbb{R}_{+}=[0,+\infty),\quad \mathbb{R}_{-}=(-\infty,0].
\]

We construct $p$ finite stars contained in $S_{\pm}$ as follows:
\begin{align*}
\Gamma_{j} & :=\{z\in\mathbb{C}: z^{p+1}\in[a_{j},b_{j}]\},\qquad \quad 0\leq
j\leq p-1,
\end{align*}
where
\begin{equation}\label{relpositint}
\begin{aligned}
0\leq a_{j}<b_{j}<\infty,\quad j\equiv 0\mod 2,\\
-\infty< a_{j}<b_{j}\leq 0, \quad j\equiv 1\mod 2.
\end{aligned}
\end{equation}
Hence $\Gamma_{j}\subset S_{+}$ if $j$ is even, and $\Gamma_{j}\subset S_{-}$
if $j$ is odd. We assume throughout that $\Gamma_{j}\cap
\Gamma_{j+1}=\emptyset$ for all $0\leq j\leq p-2$.

We define now a Nikishin system on $(\Gamma_{0},\ldots,\Gamma_{p-1})$. For each
$0\leq j\leq p-1$,  let $\sigma_{j}$ denote a positive, rotationally invariant
measure on $\Gamma_{j}$, with infinitely many points in its support. These will
be the measures generating the Nikishin system.

Let
\[
\widehat{\mu}(x):=\int\frac{d\mu(t)}{x-t}
\]
denote the Cauchy transform of a complex measure $\mu$, and let
$\mu_1,\ldots,\mu_N$ be $N\geq 1$ measures such that $\mu_j$ and $\mu_{j+1}$
have disjoint supports for every $1\leq j\leq N-1$.
We define the measure $\langle \mu_{1},\ldots,\mu_{N}\rangle$ by the following
recursive procedure.
For $N=1$, $\langle \mu_{1}\rangle:= \mu_{1}$, for $N=2$,
\[
d\langle \mu_{1},\mu_{2}\rangle (x):= \widehat{\mu}_{2}(x)\,d\mu_{1}(x),
\]
and for $N>2$,
\[
\langle \mu_{1},\ldots,\mu_{N}\rangle := \langle \mu_{1},\langle
\mu_{2},\ldots,\mu_{N}\rangle\rangle.
\]

We then define the Nikishin system
$(s_{0},\ldots,s_{p-1})=\mathcal{N}(\sigma_0,\ldots,\sigma_{p-1})$ generated by
the vector of
$p$ measures $(\sigma_0,\ldots,\sigma_{p-1})$ by setting
\begin{equation}\label{def:sj}
s_{j}:=\langle \sigma_{0},\ldots, \sigma_{j}\rangle, \qquad 0\leq j\leq p-1.
\end{equation}
Notice that these measures $s_{j}$ are all supported on the first star $\Gamma_{0}$.

An alternative and more convenient way to define the Nikishin system $(s_{0},\ldots,s_{p-1})$ is as the first row of
the following hierarchy  of measures $s_{k,j}$,
\begin{align}\label{hiers}
 \begin{array}{ccccc}
  s_{0,0} & s_{0,1} & s_{0,2} & \cdots & s_{0,p-1}\\
  & s_{1,1} & s_{1,2} & \cdots & s_{1,p-1}\\
& & s_{2,2} & \cdots & s_{2,p-1}\\
 &  & & \ddots & \vdots\\
  &  & &  & s_{p-1,p-1}
 \end{array}
\end{align}
where
\[
 s_{k,j}=\langle \sigma_k,\ldots,\sigma_j\rangle,\quad 0\leq k\leq j\leq p-1.
\]
More descriptively, the measures $s_{k,j}$ are inductively defined by setting
\[
\begin{split}
s_{k,k}:={}&\sigma_k, \quad 0\leq k\leq p-1,\\
d s_{k,j}(z)={}&\int_{\Gamma_{k+1}} \frac{ds_{k+1,j}(t)}{z-t}d\sigma_k(z),\quad
0\leq k < j\leq p-1.
\end{split}
\]

Notice then that for each pair $k$, $j$ with $0\leq k\leq j\leq p-1$,
$(s_{k,k},\ldots,s_{k,j})=\mathcal{N}(\sigma_k,\ldots,\sigma_j)$ is the
Nikishin system generated by
$(\sigma_k,\ldots,\sigma_j)$.

For every $0\leq j\leq p-1$, we shall denote by $\sigma^*_j$ the push-forward of $\sigma_j$ under the map $z\mapsto z^{p+1}$, that is,
 $\sigma^*_j$ is the measure on $[a_j,b_j]$ such that for every Borel set $E\subset [a_j,b_j]$,
\begin{equation}\label{def:sigma:star}
\sigma^*_j(E):=\sigma_j\left(\{z:z^{p+1}\in E\}\right).
\end{equation}

We now construct, out of these $\sigma_j^*$, a new hierarchy of measures
$\mu_{k,j}$, $0\leq k\leq j\leq p-1$:
\begin{align}\label{hiermu}
 \begin{array}{ccccc}
  \mu_{0,0} & \mu_{0,1} & \mu_{0,2} & \cdots & \mu_{0,p-1}\\
  & \mu_{1,1} & \mu_{1,2} & \cdots & \mu_{1,p-1}\\
& & \mu_{2,2} & \cdots & \mu_{2,p-1}\\
 &  & & \ddots & \vdots\\
  &  & &  & \mu_{p-1,p-1}
 \end{array}
\end{align}
where  the measures $\mu_{k,j}$ are inductively defined by setting
\[
\mu_{k,k}:=\sigma^*_k, \quad 0\leq k\leq p-1,
\]
\[
d
\mu_{k,j}(\tau)=\left(\tau\int_{a_{k+1}}^{b_{k+1}}\frac{d\mu_{k+1,j}(s)}{\tau-s}
\right)d\sigma^*_{k}(\tau),\quad \tau \in [a_k,b_k],\quad 0\leq k < j\leq p-1.
\]

The following result, proved in \cite[Prop. 2.2]{LopMin}, describes the
relation between the measures $s_{k,j}$ and $\mu_{k,j}$.

\begin{proposition} \label{proposition2} For every $0\leq k\leq j\leq p-1$, we
have
\begin{align*}
\int_{\Gamma_{k}}\frac{ds_{k,j}(t)}{z-t}={}&
z^{p+k-j}\int_{a_{k}}^{b_{k}}\frac{d\mu_{k,j}(\tau)}{z^{p+1}-\tau},
\end{align*}
that is,
\[
\widehat{s}_{k,j}(z)=z^{p+k-j}\widehat{\mu}_{k,j}(z^{p+1}).
\]
\end{proposition}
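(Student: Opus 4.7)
The plan is to proceed by induction on $j-k$. Let $\omega$ denote a primitive $(p+1)$-th root of unity, so that each star $\Gamma_k$ is invariant under the rotation $t\mapsto\omega t$, and the rotational invariance of $\sigma_k$ gives $\int f(t)\, d\sigma_k(t)=\int f(\omega^\ell t)\, d\sigma_k(t)$ for every $\ell$.

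For the base case $k=j$ one has $s_{k,k}=\sigma_k$ and $\mu_{k,k}=\sigma_k^*$, so the claim reduces to $\widehat{\sigma}_k(z)=z^p\,\widehat{\sigma_k^*}(z^{p+1})$. Averaging the Cauchy kernel $1/(z-t)$ over the $p+1$ rotations and applying the partial fraction identity
\[
\frac{1}{p+1}\sum_{\ell=0}^{p}\frac{1}{z-\omega^\ell t}=\frac{z^p}{z^{p+1}-t^{p+1}},
\]
followed by the change of variables $\tau=t^{p+1}$ defining $\sigma_k^*$, yields the base case.

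For the inductive step, I would use the recursion $s_{k,j}=\langle\sigma_k,s_{k+1,j}\rangle$, so that $ds_{k,j}(t)=\widehat{s}_{k+1,j}(t)\,d\sigma_k(t)$. By the induction hypothesis, $\widehat{s}_{k+1,j}(t)=t^{\,p+k+1-j}\,\widehat{\mu}_{k+1,j}(t^{p+1})$. Substituting and symmetrizing the integrand over rotations $t\mapsto\omega^\ell t$, and noting that $t^{p+1}$ is rotation-invariant while $t^{\,p+k+1-j}$ contributes the character $\omega^{\ell m}$ with $m=p+k+1-j$, reduces matters to the generalized identity
\[
\sum_{\ell=0}^{p}\frac{\omega^{\ell m}}{z-\omega^\ell t}=\frac{(p+1)\,z^{p+k-j}\,t^{\,j-k}}{z^{p+1}-t^{p+1}},
\]
which one obtains from the partial fraction expansion of $(z^{p+1}-t^{p+1})^{-1}$ together with the vanishing of $\sum_{\ell=0}^{p}\omega^{\ell r}$ unless $(p+1)\mid r$. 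After this cancellation the factor $z^{p+k-j}$ comes out in front, and the remaining integrand carries precisely a factor $t^{p+1}$; pushing forward via $\tau=t^{p+1}$ and recognizing the definition $d\mu_{k,j}(\tau)=\tau\,\widehat{\mu}_{k+1,j}(\tau)\,d\sigma_k^*(\tau)$ delivers the right-hand side.

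The only non-routine ingredient is the generalized partial fraction identity above; the exponent $m=p+k+1-j$ must be tracked carefully so that the accumulated power of $t$ in the numerator is exactly $t^{p+1}$, matching the factor $\tau$ built into the inductive definition of $\mu_{k,j}$ and permitting a clean reduction to $\widehat{\mu}_{k,j}(z^{p+1})$.
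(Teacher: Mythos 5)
Your proof is correct. The paper itself gives no proof of this proposition (it is quoted from Proposition 2.2 of \cite{LopMin}), but your induction on $j-k$ --- averaging over the rotations $t\mapsto\omega^{\ell}t$ using the rotational invariance of $\sigma_{k}$ and invoking the partial-fraction identity $\sum_{\ell=0}^{p}\omega^{\ell m}/(z-\omega^{\ell}t)=(p+1)\,z^{m-1}t^{p+1-m}/(z^{p+1}-t^{p+1})$ with $m=p+k+1-j$, so that the total power of $t$ in the numerator becomes exactly $t^{p+1}$ and is absorbed as the factor $\tau$ in the inductive definition of $\mu_{k,j}$ --- is precisely the standard argument used there, and your exponent bookkeeping ($z^{m-1}=z^{p+k-j}$, $t^{m}\cdot t^{p+1-m}=t^{p+1}$) checks out.
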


\subsection{Multiple orthogonal polynomials and functions of the second kind}

\begin{definition}\label{def:MOP}
Let $\{Q_{n}(z)\}_{n=0}^\infty$ be the sequence of monic polynomials of lowest
degree that satisfy the following non-hermitian orthogonality conditions:
\begin{equation}\label{orthog:Qn}
\int_{\Gamma_{0}} Q_{n}(z)\,z^{l}\,d s_{j}(z)=0,\qquad l=0,\ldots,\left\lfloor
\frac{n-j-1}{p}\right\rfloor,\qquad 0\leq j\leq p-1,
\end{equation}
where the measures $s_{j}$ are defined in \eqref{def:sj}
\end{definition}
In more detail, \eqref{orthog:Qn} asserts that the polynomial $Q_{mp+r}$ must
satisfy the orthogonality relations
\[
\int_{\Gamma_{0}} Q_{mp+r}(z)\,z^{l}\,ds_{j}(z)=0,\qquad l=0,\ldots,m-1,\quad
0\leq j\leq p-1,
\]
\[
\int_{\Gamma_{0}} Q_{mp+r}(z)\,z^{m}\,ds_{j}(z)=0,\qquad 0\leq j\leq
r-1.
\]

The goal of \cite{LopMin} was to investigate algebraic properties and the weak
asymptotic behavior of the sequence of multi-orthogonal polynomials $(Q_{n})$.
In the following result we summarize some of the properties proved in \cite{LopMin}.

\begin{proposition}\label{prop:Qnsummary}
The following properties hold:
\begin{itemize}
\item[1)] For each $n\geq 0$, the polynomial $Q_{n}$ has maximal degree $n$.
\item[2)] If $n\equiv \ell \mod (p+1)$, $0\leq \ell\leq p$, then there exists
a monic polynomial $\mathcal{Q}_{d}$ of degree $d=\frac{n-\ell}{p+1}$ such that
\begin{equation}\label{eq:decompQn}
Q_{n}(z)=z^{\ell} \mathcal{Q}_{d}(z^{p+1}),
\end{equation}
where the zeros of $\mathcal{Q}_{d}$ are all simple and located in $(a_{0},b_{0})$. In particular,
the zeros of $Q_{n}$ are located in the star-like set $S_{+}$.
\item[3)] The polynomials $Q_{n}$ satisfy the following three-term recurrence relation of order $p+1$:
\begin{equation}\label{threetermrec}
z Q_{n}(z)=Q_{n+1}(z)+a_{n}\,Q_{n-p}(z),\qquad n\geq p,\qquad
a_{n}\in\mathbb{R},
\end{equation}
where
\begin{equation}\label{eq:firstQns}
Q_{\ell}(z)=z^{\ell},\qquad \ell=0,\ldots,p.
\end{equation}
\item[4)] The recurrence coefficients $a_{n}$ in \eqref{threetermrec} are all positive, i.e. $a_{n}>0$ for all $n\geq p$.
\item[5)] For every $n\geq p+1$, the non-zero roots of the polynomials $Q_{n}$ and $Q_{n+1}$ interlace on $\Gamma_{0}$.
\end{itemize}
\end{proposition}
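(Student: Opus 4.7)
The plan is to exploit the rotational symmetry of the generating measures $\sigma_j$ to reduce the whole problem to one about type II multiple orthogonal polynomials for a Nikishin system on the real interval $[a_0,b_0]$, and then to lift the classical results (AT-property, reality and simplicity of zeros, three-term recurrence, interlacing, positivity of coefficients) back to the star-like setting.

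For Part 2, I would first observe that since each $\sigma_j$ is invariant under $z\mapsto \omega z$, where $\omega$ is a primitive $(p{+}1)$-th root of unity, this invariance propagates through the iterated Cauchy transforms to every measure $s_{k,j}$, and in particular to the $s_j$. Hence, if $Q_n$ is the monic polynomial of lowest degree satisfying \eqref{orthog:Qn}, then so is $\omega^{-n}Q_n(\omega z)$. Once uniqueness is in place, this yields $Q_n(\omega z)=\omega^n Q_n(z)$, which forces the form $Q_n(z)=z^{\ell}\mathcal{Q}_d(z^{p+1})$ with $\ell\equiv n\pmod{p+1}$. Now applying Proposition~\ref{proposition2} to $\widehat{s}_j$ and performing the change of variable $\tau=z^{p+1}$ transforms the orthogonality conditions \eqref{orthog:Qn} into standard type II multiple orthogonality conditions for $\mathcal{Q}_d$ with respect to the Nikishin system $(\mu_{0,0},\ldots,\mu_{0,p-1})$ on $[a_0,b_0]$. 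Part 1, together with the reality, simplicity, and location in $(a_0,b_0)$ of the zeros of $\mathcal{Q}_d$, then follows from the well-known AT-property of Nikishin systems on the real line (Driver--Stahl, Nikishin--Sorokin).

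For Part 3, I would multiply $Q_n$ by $z$ and check directly, using \eqref{orthog:Qn}, that the monic polynomial $zQ_n(z)-Q_{n+1}(z)$ (of degree at most $n$) is orthogonal with respect to each $s_j$ against all powers $z^l$ with $l\leq\lfloor (n-p-1-j)/p\rfloor$. These are exactly the orthogonality conditions characterizing $Q_{n-p}$, and uniqueness forces $zQ_n-Q_{n+1}=a_n Q_{n-p}$ for some $a_n\in\mathbb{R}$. The initial conditions \eqref{eq:firstQns} hold because for $n\leq p$ the orthogonality conditions are vacuous. For Part 4, the positivity of $a_n$ can be extracted either by computing $a_n$ as a ratio of moment determinants (taking advantage of the reduction of Part 2 to a real Nikishin system, where the corresponding Hankel-type determinants are positive by the AT-property), or equivalently by identifying $a_n$ with a ratio of normalizing integrals $\int Q_{n-p}^2\, d\nu$ for an appropriate positive varying measure $\nu$ on $[a_0,b_0]$ coming from the hierarchy \eqref{hiermu}.

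Finally, for Part 5, the decomposition in Part 2 reduces the claim about non-zero zeros of $Q_n$ and $Q_{n+1}$ on $\Gamma_0$ to an interlacing statement on $(a_0,b_0)$ for consecutive polynomials $\mathcal{Q}_d$ and $\mathcal{Q}_{d'}$ (with $d'\in\{d,d+1\}$) from the real Nikishin system, a property that is classical for such systems and follows from the AT-property together with a sign-change argument for the associated linear forms. The main obstacle that I expect is bookkeeping in the reduction of Part 2: one must carefully track how the stepped index structure $\lfloor(n-j-1)/p\rfloor$ in the orthogonality conditions \eqref{orthog:Qn} translates, after the substitution $\tau=z^{p+1}$ and the use of Proposition~\ref{proposition2}, into the standard type II orthogonality conditions with multi-index depending on $(\ell,d)$; getting this translation right is what makes the Nikishin-system machinery on the real line directly applicable and underlies every subsequent part. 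A secondary subtlety is establishing uniqueness of $Q_n$ \emph{before} using it to deduce the $\omega$-symmetry, which one can circumvent by proving the symmetry first for any lowest-degree candidate and then using the AT-property of the transported system on $[a_0,b_0]$ to pin down $\mathcal{Q}_d$ uniquely.
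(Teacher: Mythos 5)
The paper does not actually prove Proposition~\ref{prop:Qnsummary}; it defers to Propositions 2.16 and 3.1, Theorem 3.5 and Corollary 3.6 of \cite{LopMin}, and the machinery reproduced in Section~\ref{section:Pre} (the hierarchy \eqref{hiermu}, Proposition~\ref{proposition2}, the orthogonality conditions \eqref{orthogredPsink}, and the AT-type Lemma~\ref{lemma:orthosystem}) shows that the proof there follows essentially your outline: rotational invariance of the $\sigma_j$ forces $Q_n(\omega z)=\omega^n Q_n(z)$ and hence the decomposition \eqref{eq:decompQn}; Proposition~\ref{proposition2} transports the orthogonality to $[a_0,b_0]$ (producing \eqref{orthogredPsink} with $k=0$ --- note these are conditions with shifted exponents $\tau^{s+\delta}$, not plain type II conditions, which is exactly why the bespoke Lemma~\ref{lemma:orthosystem} is needed rather than an off-the-shelf Driver--Stahl statement); and the AT property then yields full degree, simplicity and localization of the zeros, positivity of $a_n$, and interlacing. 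In substance you are on the intended path, and your closing remarks correctly identify where the bookkeeping lives.

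One step, however, fails as written. In Part 3 you assert that $zQ_n-Q_{n+1}$ has degree at most $n$, satisfies the orthogonality conditions of index $n-p$, and that ``uniqueness forces'' $zQ_n-Q_{n+1}=a_nQ_{n-p}$. Uniqueness of $Q_{n-p}$ only identifies the solution among polynomials of degree at most $n-p$; the space of polynomials of degree at most $n$ satisfying those $n-p$ conditions has dimension at least $p+1$, so nothing is forced at degree $\le n$. The missing ingredient is the symmetry you already established in Part 2: $Q_n$ and $Q_{n+1}$ contain only powers $z^m$ with $m\equiv n$ and $m\equiv n+1 \pmod{p+1}$ respectively, so $zQ_n-Q_{n+1}$ contains only powers $\equiv n+1\pmod{p+1}$ and, having degree at most $n$, in fact has degree at most $n-p$; only then does uniqueness apply. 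Two smaller slips: $zQ_n-Q_{n+1}$ is not monic, and for $1\le \ell\le p$ the conditions \eqref{orthog:Qn} on $Q_\ell$ are not vacuous (there are $\ell$ of them, with $l=0$ and $j=0,\dots,\ell-1$), though $z^\ell$ still satisfies them because $\int_{\Gamma_0}z^\ell\,ds_j=0$ whenever $j\not\equiv\ell\pmod{p+1}$, by the same rotational symmetry.
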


To check the validity of the statements in Proposition~\ref{prop:Qnsummary}, we refer the reader
to the following results in \cite{LopMin}: Propositions 2.16 and 3.1, Theorem 3.5 and Corollary 3.6.

As it is well-known in the theory of multi-orthogonal polynomials, the so-called functions
of the second kind play a crucial role in the asymptotic analysis. These functions are defined next.

\begin{definition}\label{definitionPsi}
Set $\Psi_{n,0}=Q_{n}$ and let
\[
\Psi_{n,k}(z)=\int_{\Gamma_{k-1}}\frac{\Psi_{n,k-1}(t)}{z-t}\,d\sigma_{k-1}(t),
\qquad k=1,\ldots,p.
\]
\end{definition}

Observe that for each $k=1,\ldots,p$, $\Psi_{n,k}$ is analytic in $\overline{\mathbb{C}}\setminus
\Gamma_{k-1}$.

The functions $\Psi_{n,k}$ were also investigated in \cite{LopMin}, and some of
the properties found there are stated in the following result, see
Propositions 2.5--2.7 in \cite{LopMin}.

\begin{proposition}\label{prop:Psinksummary}
The following properties hold:
\begin{itemize}
\item[1)] For each $k=0,\ldots,p-1$, the function $\Psi_{n,k}$ satisfies the
orthogonality conditions
\begin{equation}\label{orthog:Psink}
\int_{\Gamma_{k}} \Psi_{n,k}(z)\,z^{l}\,ds_{k,j} (z)=0,\qquad 0\leq l\leq \lf
\frac{n-j-1}{p} \rf,\qquad k\leq j\leq p-1.
\end{equation}
\item[2)] Let $\omega:=e^{\frac{2\pi i}{p+1}}$. For each $k=0,\ldots,p,$ we have $\Psi_{n,k}(z)=\omega^{n-k} \Psi_{n,k}(z)$.
\item[3)] Assume that $n \equiv \ell \mod (p+1)$ with $0\leq \ell\leq p$. Then, for
each $k=1,\ldots, p$ we have
\begin{equation}\label{altintrep:Psink}
\Psi_{n,k}(z)=
z^{p-s}\,\int_{\Gamma_{k-1}}\frac{\Psi_{n,k-1}(t)\,t^{s}}{z^{p+1}-t^{p+1}}\,
d\sigma_{k-1}(t),
\end{equation}
where $s$ is the only integer in $\{0,\ldots,p\}$ such that $s\equiv
k-1-\ell\mod (p+1)$, that is,
\begin{equation}\label{param:s}
s=\begin{cases}
k-1-\ell, & \ell< k,\\
p+k-\ell, & k \leq \ell.
\end{cases}
\end{equation}
\end{itemize}
\end{proposition}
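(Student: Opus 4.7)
My plan is to prove the three items in the order (2), (3), (1): part (2) encodes the rotational symmetry imposed by the star $S_{+}$ and is the most primitive; part (3) is essentially a repackaging of (2) via a geometric-sum identity; and part (1) is a separate induction that uses the hierarchy of measures $s_{k,j}$. For (2), which I read as $\Psi_{n,k}(\omega z)=\omega^{n-k}\Psi_{n,k}(z)$, the base case $k=0$ is handled by the factorization $Q_{n}(z)=z^{\ell}\mathcal{Q}_{d}(z^{p+1})$ of Proposition~\ref{prop:Qnsummary}: since $\omega^{p+1}=1$ and $n\equiv\ell\pmod{p+1}$, one has $Q_{n}(\omega z)=\omega^{\ell}Q_{n}(z)=\omega^{n}Q_{n}(z)$. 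The inductive step substitutes $t=\omega u$ in the defining integral for $\Psi_{n,k}(\omega z)$ and combines rotational invariance of $\sigma_{k-1}$, the cancellation $\omega z-\omega u=\omega(z-u)$, and the inductive hypothesis $\Psi_{n,k-1}(\omega u)=\omega^{n-k+1}\Psi_{n,k-1}(u)$ to produce the required factor $\omega^{n-k}$.

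For (3), start from the identity
\[
\frac{1}{z-t}=\sum_{j=0}^{p}\frac{z^{p-j}t^{j}}{z^{p+1}-t^{p+1}},
\]
valid since $z^{p+1}-t^{p+1}=(z-t)\sum_{j=0}^{p}z^{p-j}t^{j}$. Plugging this into $\Psi_{n,k}(z)$ yields $\Psi_{n,k}(z)=\sum_{j=0}^{p}z^{p-j}I_{j}(z)$ with $I_{j}(z)=\int_{\Gamma_{k-1}}\Psi_{n,k-1}(t)\,t^{j}\,(z^{p+1}-t^{p+1})^{-1}\,d\sigma_{k-1}(t)$. Substituting $t=\omega u$ and applying part (2), the rotational invariance of $\sigma_{k-1}$, and $\omega^{p+1}=1$, one obtains $I_{j}(z)=\omega^{n-k+1+j}\,I_{j}(z)$, forcing $I_{j}\equiv 0$ unless $n-k+1+j\equiv 0\pmod{p+1}$. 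The unique $j\in\{0,\ldots,p\}$ with this property is exactly the $s$ of \eqref{param:s}, and only that term survives, giving \eqref{altintrep:Psink}.

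For (1), I induct on $k$ with base case the orthogonality \eqref{orthog:Qn} of $Q_n$. In the inductive step, expand $z^{l}=t^{l}+(z-t)(z^{l-1}+\cdots+t^{l-1})$ to write
\[
\int_{\Gamma_{k}}\frac{z^{l}\,ds_{k,j}(z)}{z-t}=t^{l}\,\widehat{s}_{k,j}(t)+R_{l-1}(t),
\]
where $R_{l-1}$ is a polynomial in $t$ of degree at most $l-1$. Inserting this into $\int_{\Gamma_{k}}\Psi_{n,k}(z)z^{l}\,ds_{k,j}(z)$ by Fubini and using the hierarchy identity $\widehat{s}_{k,j}(t)\,d\sigma_{k-1}(t)=ds_{k-1,j}(t)$ converts the integral into
\[
\int_{\Gamma_{k-1}}\Psi_{n,k-1}(t)\,t^{l}\,ds_{k-1,j}(t)+\int_{\Gamma_{k-1}}\Psi_{n,k-1}(t)\,R_{l-1}(t)\,d\sigma_{k-1}(t),
\]
both of which vanish by the inductive hypothesis for $\Psi_{n,k-1}$: the first since $j\geq k>k-1$, and the second against $\sigma_{k-1}=s_{k-1,k-1}$, i.e.\ at index $j=k-1$. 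The main obstacle I anticipate is the degree bookkeeping: verifying $\deg R_{l-1}\leq l-1\leq\lfloor(n-k)/p\rfloor$ under the running hypothesis $l\leq\lfloor(n-j-1)/p\rfloor$ with $j\geq k$ reduces to the elementary inequality $\lfloor(n-k-1)/p\rfloor\leq\lfloor(n-k)/p\rfloor$, which holds but requires a little care with the floors. Once this is nailed down, Fubini, the rotational change of variable, and the geometric-sum identity are all routine.
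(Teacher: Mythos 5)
The paper itself offers no proof of Proposition~\ref{prop:Psinksummary}: it is quoted from \cite{LopMin} (Propositions 2.5--2.7), so there is no in-text argument to compare yours against. That said, your proposal is a correct and essentially self-contained derivation along the standard lines one would expect in \cite{LopMin}. Parts (2) and (3) check out: the base case $k=0$ follows from $Q_n(z)=z^{\ell}\mathcal{Q}_d(z^{p+1})$, the inductive step from the rotational invariance of $\sigma_{k-1}$, and in (3) the relation $I_j=\omega^{\,n-k+1+j}I_j$ kills every term of the geometric-sum expansion except $j=s$ with $s\equiv k-1-\ell \bmod (p+1)$, which is precisely \eqref{param:s}. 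Part (1) also closes: after Fubini and the division $z^l=t^l+(z-t)(z^{l-1}+\cdots+t^{l-1})$, the term $t^l\int_{\Gamma_k}(z-t)^{-1}ds_{k,j}(z)$ produces $\mp t^l\widehat{s}_{k,j}(t)\,d\sigma_{k-1}(t)=\mp t^l\,ds_{k-1,j}(t)$, handled by the inductive hypothesis at level $k-1$ for the same $j$ and the same range of $l$ (the bound $\lf (n-j-1)/p\rf$ does not depend on $k$), while the polynomial remainder of degree $\leq l-1$ is annihilated by the hypothesis at $j=k-1$ because $l-1<l\leq\lf (n-j-1)/p\rf\leq\lf (n-k)/p\rf$. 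Two cosmetic points: with the paper's convention $\widehat{\mu}(x)=\int (x-t)^{-1}d\mu(t)$ your identity should read $\int_{\Gamma_k}z^l(z-t)^{-1}ds_{k,j}(z)=-t^l\widehat{s}_{k,j}(t)+R_{l-1}(t)$ (the sign is immaterial for the vanishing), and in (3) you should remark that $z^{p+1}\neq t^{p+1}$ for $z\notin\Gamma_{k-1}$, $t\in\Gamma_{k-1}$, which legitimizes the expansion on the domain of analyticity of $\Psi_{n,k}$.
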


Some observations on Proposition~\ref{prop:Psinksummary} are appropriate at this point.
First, \eqref{orthog:Psink} shows that the function $\Psi_{n,k}$, $0\leq k\leq p-1$,
satisfies multiple orthogonality conditions similar to those satisfied by $Q_n$
but with respect to the Nikishin system given by the $k$th row of the
hierarchy \eqref{hiers}. Formula \eqref{altintrep:Psink} allows us to find a
representation of $\Psi_{n,k}$ that is similar to the representation of
$Q_{n}$ in \eqref{eq:decompQn}. The functions that are necessary
for this representation are defined next.

\begin{definition}\label{definitionpsi}
Set $\psi_{n,0}:=\mathcal{Q}_d$, where $\mathcal{Q}_d$ is the polynomial that appears in the right-hand side of \eqref{eq:decompQn}. For $1\leq k\leq p$, let
$\psi_{n,k}$ be the function analytic in $\mathbb{C}\setminus
[a_{k-1},b_{k-1}]$ defined by
\[
\psi_{n,k}(z)=\begin{cases}
z\int_{\Gamma_{k-1}}\frac{\Psi_{n,k-1}(t)\,t^{k-1-\ell}}{z-t^{p+1}}\,d\sigma_{
k-1}(t), & \ell< k,\\[1em]
\int_{\Gamma_{k-1}}\frac{\Psi_{n,k-1}(t)\,t^{p+k-\ell}}{z-t^{p+1}}\,d\sigma_{k-1
}(t), & k\leq \ell,
\end{cases}
\]
where, as before, $n\equiv \ell \mod (p+1)$, $0\leq \ell\leq p$.
\end{definition}

Indeed, the following important result is now an immediate consequence of
the above definition and \eqref{altintrep:Psink}--\eqref{param:s}.

\begin{corollary}\label{coroPsi} Suppose $n
\equiv \ell \mod (p+1)$ with $0\leq \ell\leq p$, and define
\begin{equation}\label{varymeas:sigma}
d\sigma_{n,k}(\tau):=\begin{cases}
d\sigma_{k}^{*}(\tau), & \ell\leq k,\\
\tau\,d\sigma_{k}^{*}(\tau), & k<\ell.
\end{cases}
\end{equation}
Then,
\begin{equation}\label{modifiedPsink}
z^{k-\ell}\,\Psi_{n,k}(z)=\psi_{n,k}(z^{p+1}), \quad 0\leq k\leq p,
\end{equation}
and for all $1\leq k\leq p$,
\begin{equation}
\psi_{n,k}(z)=\begin{cases}\label{recurrenceforpsipequena}
z\int_{a_{k-1}}^{b_{k-1}}\frac{\psi_{n,k-1}(\tau)}{z-\tau}\,d\sigma_{n,k-1}(\tau), & \ell<k,\\[1em]
\int_{a_{k-1}}^{b_{k-1}}\frac{\psi_{n,k-1}(\tau)}{z-\tau}\,d\sigma_{n,k-1}(\tau), & k\leq \ell.
\end{cases}
\end{equation}
\end{corollary}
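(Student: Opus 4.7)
The plan is to establish both assertions of the corollary simultaneously by induction on $k$, taking the identity $z^{k-\ell}\Psi_{n,k}(z)=\psi_{n,k}(z^{p+1})$ as the main inductive invariant; the recurrence \eqref{recurrenceforpsipequena} for $\psi_{n,k}$ will emerge as a byproduct of the inductive step, since the integral computed during the step agrees both with the definition of $\psi_{n,k}$ and with the right-hand side of the claimed recurrence. For the base case $k=0$, Proposition \ref{prop:Qnsummary}(2) gives $Q_{n}(z)=z^{\ell}\mathcal{Q}_{d}(z^{p+1})$, and since $\Psi_{n,0}=Q_{n}$ and $\psi_{n,0}=\mathcal{Q}_{d}$ by Definitions \ref{definitionPsi} and \ref{definitionpsi}, this is exactly the required identity.

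For the inductive step I would apply the alternative integral representation \eqref{altintrep:Psink} to $\Psi_{n,k}$ and then insert the inductive hypothesis on $\Psi_{n,k-1}$. The exponent $s$ in \eqref{param:s} has been chosen precisely so that the prefactor $z^{k-\ell}\cdot z^{p-s}$ collapses to $z^{p+1}$ when $\ell<k$ and to $1$ when $k\leq\ell$, matching the two branches of Definition \ref{definitionpsi}. In the first case, $\ell\leq k-1$ and $s=k-1-\ell\geq 0$, so the inductive hypothesis reads $t^{s}\Psi_{n,k-1}(t)=\psi_{n,k-1}(t^{p+1})$. In the second case, $k-1<\ell$, so the inductive hypothesis rewrites as $\Psi_{n,k-1}(t)=t^{\ell-k+1}\psi_{n,k-1}(t^{p+1})$; combined with $s=p+k-\ell$ this yields $t^{s}\Psi_{n,k-1}(t)=t^{p+1}\psi_{n,k-1}(t^{p+1})$.

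In either case the integrand in \eqref{altintrep:Psink} depends on $t$ solely through $t^{p+1}$, so by the very definition \eqref{def:sigma:star} of $\sigma_{k-1}^{*}$ one can push the integral forward,
\[
\int_{\Gamma_{k-1}}F(t^{p+1})\,d\sigma_{k-1}(t)=\int_{a_{k-1}}^{b_{k-1}}F(\tau)\,d\sigma_{k-1}^{*}(\tau),
\]
to land on $[a_{k-1},b_{k-1}]$. The two branches of the definition \eqref{varymeas:sigma} of $\sigma_{n,k-1}$ are tailored precisely to absorb the extra factor of $\tau=t^{p+1}$ appearing in the case $k\leq\ell$: when $\ell\leq k-1$ we simply get $d\sigma_{n,k-1}=d\sigma_{k-1}^{*}$, and when $k-1<\ell$ the factor $\tau$ combines with $d\sigma_{k-1}^{*}$ to give $d\sigma_{n,k-1}=\tau\,d\sigma_{k-1}^{*}$. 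Writing $w=z^{p+1}$, the resulting integral takes exactly the form given by Definition \ref{definitionpsi} (which is $\psi_{n,k}(w)$) and by the right-hand side of the recurrence \eqref{recurrenceforpsipequena}, completing both parts in one stroke.

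I do not anticipate any real obstacle; the proof is essentially bookkeeping of exponents and of the compatibility between the strict/non-strict dichotomy $\ell<k$ vs $k\leq\ell$ in Definition \ref{definitionpsi} and the corresponding dichotomy $\ell\leq k-1$ vs $k-1<\ell$ used to evaluate the inductive hypothesis at index $k-1$ and to unpack \eqref{varymeas:sigma}. The only step requiring a minimum of care is verifying that the integrands are genuinely functions of $t^{p+1}$, which is automatic once the inductive hypothesis is inserted, since the only $t$-dependence coming from $\Psi_{n,k-1}$ is thereby converted into dependence on $t^{p+1}$.
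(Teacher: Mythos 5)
Your proposal is correct and follows exactly the route the paper intends: the paper declares the corollary an immediate consequence of Definition~\ref{definitionpsi} together with \eqref{altintrep:Psink}--\eqref{param:s}, and your argument simply fills in that bookkeeping (the exponent cancellations $z^{k-\ell}z^{p-s}=z^{p+1}$ or $1$, the identity $t^{s}\Psi_{n,k-1}(t)=\psi_{n,k-1}(t^{p+1})$ or $t^{p+1}\psi_{n,k-1}(t^{p+1})$, and the pushforward under $t\mapsto t^{p+1}$ absorbing the extra $\tau$ into $\sigma_{n,k-1}$). The inductive framing is harmless but not strictly needed, since \eqref{modifiedPsink} at each level follows directly from \eqref{altintrep:Psink} and Definition~\ref{definitionpsi} at that same level.
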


We will also refer to the functions $\psi_{n,k}$ as functions of the second kind.

We have seen that the functions $\Psi_{n,k}$ satisfy orthogonality relations
with respect to the hierarchy \eqref{hiers}. An important property is that the associated
functions $\psi_{n,k}$ do the same with respect to the hierarchy
\eqref{hiermu}. We reproduce this property here, which is Proposition 2.10 in \cite{LopMin}.

\begin{proposition}%\label{orthocondforpsink}
Let $0\leq k\leq p-1$ and assume that $n\equiv \ell \mod (p+1)$ with $0\leq
\ell\leq p$. Then the function $\psi_{n,k}$ satisfies the following
orthogonality conditions:
 \begin{equation}\label{orthogredPsink}
\int_{a_{k}}^{b_{k}}\psi_{n,k}(\tau)\,\tau^{s}\,d\mu_{k,j}(\tau)=0,\quad
\left\lceil\frac{\ell-j}{p+1}\right\rceil\leq s\leq \left\lfloor
\frac{n+p\ell-1-j(p+1)}{p(p+1)}\right\rfloor,\quad k\leq j\leq p-1.
\end{equation}
\end{proposition}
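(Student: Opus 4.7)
The plan is to transport the orthogonality of $\Psi_{n,k}$ on the star $\Gamma_k$ (item 1 of Proposition \ref{prop:Psinksummary}) to the stated orthogonality of $\psi_{n,k}$ on $[a_k,b_k]$ by means of the change of variables $\tau=z^{p+1}$ and the rotational invariance of $\sigma_k$. Three ingredients need to be chained together: the decomposition $\Psi_{n,k}(z)=z^{\ell-k}\psi_{n,k}(z^{p+1})$ from Corollary \ref{coroPsi}; the formula $\widehat{s}_{k+1,j}(z)=z^{p+k+1-j}\widehat{\mu}_{k+1,j}(z^{p+1})$ from Proposition \ref{proposition2}; and the recursion $d\mu_{k,j}(\tau)=\tau\widehat{\mu}_{k+1,j}(\tau)\,d\sigma^*_k(\tau)$ defining the hierarchy \eqref{hiermu}.

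Assume first $k<j$. Using the recursive definition $s_{k,j}=\langle\sigma_k,s_{k+1,j}\rangle$, one has $ds_{k,j}(z)=\widehat{s}_{k+1,j}(z)\,d\sigma_k(z)$; substituting this together with the other two ingredients into the orthogonality of $\Psi_{n,k}$ yields
\begin{equation*}
\int_{\Gamma_k}\Psi_{n,k}(z)\,z^l\,ds_{k,j}(z)=\int_{\Gamma_k} z^{\ell+l+p+1-j}\,\psi_{n,k}(z^{p+1})\,\widehat{\mu}_{k+1,j}(z^{p+1})\,d\sigma_k(z).
\end{equation*}
The remaining factor $\psi_{n,k}(z^{p+1})\widehat{\mu}_{k+1,j}(z^{p+1})$ depends only on $z^{p+1}$, so under the rotation $z\mapsto\omega z$ with $\omega=e^{2\pi i/(p+1)}$ the whole integrand scales by $\omega^{\ell+l+p+1-j}$. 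Rotational invariance of $\sigma_k$ then forces the integral to vanish whenever $\ell+l-j\not\equiv 0\pmod{p+1}$, so the non-trivial content lies in the values $l=(p+1)s+j-\ell$. For such $l$, the exponent becomes $(p+1)(s+1)$, and pushing $\sigma_k$ forward to $\sigma^*_k$ via $\tau=z^{p+1}$ collapses the integral to
\begin{equation*}
\int_{a_k}^{b_k}\tau^{s+1}\,\psi_{n,k}(\tau)\,\widehat{\mu}_{k+1,j}(\tau)\,d\sigma^*_k(\tau)=\int_{a_k}^{b_k}\psi_{n,k}(\tau)\,\tau^s\,d\mu_{k,j}(\tau),
\end{equation*}
exactly the integral required to vanish. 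The boundary case $k=j$ is handled identically, with $ds_{k,k}=d\sigma_k$ and $d\mu_{k,k}=d\sigma^*_k$, so the factor $\widehat{\mu}_{k+1,j}$ is simply absent.

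The final step is to translate the range $0\leq l\leq\lfloor(n-j-1)/p\rfloor$ from item 1 of Proposition \ref{prop:Psinksummary} into the range for $s=(l+\ell-j)/(p+1)$. The constraint $l\geq 0$ gives $s\geq\lceil(\ell-j)/(p+1)\rceil$ at once, while the upper bound $l\leq\lfloor(n-j-1)/p\rfloor$ transforms via the elementary identity $\lfloor(\lfloor a/b\rfloor+d)/c\rfloor=\lfloor(a+bd)/(bc)\rfloor$ (valid for positive integers $b,c$ and any integer $d$, obtained by combining Hermite's identity with the shift $\lfloor a/b\rfloor+d=\lfloor(a+bd)/b\rfloor$) into $\lfloor(n+p\ell-1-j(p+1))/(p(p+1))\rfloor$, matching the statement. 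The main obstacle is really just careful bookkeeping: the rotational invariance of $\sigma_k$ is doing two jobs at once (forcing vanishing outside the arithmetic progression of good $l$, and legitimizing the pushforward for the surviving $l$), and the indices must be tracked through three successive substitutions and a nested floor, which is the step most prone to off-by-one errors.
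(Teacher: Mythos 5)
Your argument is correct: reducing the star orthogonality \eqref{orthog:Psink} to \eqref{orthogredPsink} by combining the factorization $\Psi_{n,k}(z)=z^{\ell-k}\psi_{n,k}(z^{p+1})$ from Corollary~\ref{coroPsi}, the identity $\widehat{s}_{k+1,j}(z)=z^{p+k+1-j}\widehat{\mu}_{k+1,j}(z^{p+1})$ from Proposition~\ref{proposition2}, the rotational invariance of $\sigma_k$ (which kills all $l$ with $\ell+l-j\not\equiv 0 \bmod (p+1)$ and permits the pushforward to $\sigma_k^*$ for the surviving $l=(p+1)s+j-\ell$), and the recursion $d\mu_{k,j}(\tau)=\tau\,\widehat{\mu}_{k+1,j}(\tau)\,d\sigma_k^*(\tau)$ is precisely the intended mechanism, and your index bookkeeping, including the floor identity $\lf(\lf a/b\rf+d)/c\rf=\lf(a+bd)/(bc)\rf$ that produces the upper bound on $s$, checks out. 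The present paper does not reprove this statement but cites \cite[Proposition 2.10]{LopMin}, whose argument follows the same route, so there is nothing to add beyond confirming that your proof is complete.
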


\subsection{Counting the number of orthogonality conditions}\label{counting}

For the asymptotic analysis of the multi-orthogonal polynomials and the
functions of the second kind, it is crucial to have a control on the
total number of orthogonality conditions in \eqref{orthogredPsink}. We define this quantity next
in the same way it was defined in \cite{LopMin}.

\begin{definition}
Let  $n$ be a nonnegative integer, and let $\ell$ be the integer satisfying $n\equiv \ell\mod (p+1)$, $0\leq\ell\leq p$. For each
$0\leq j\leq p-1$, let $M_j=M_j(n)$ be the number of integers $s$
satisfying the inequalities
\begin{equation}\label{countingcond}
\left\lceil\frac{\ell-j}{p+1}\right\rceil\leq s\leq \left\lfloor
\frac{n+p\ell-1-j(p+1)}{p(p+1)}\right\rfloor.
\end{equation}
For each $0\leq k\leq p-1$, we define
\begin{equation}\label{def:Znk}
 Z(n,k):=\sum_{j=k}^{p-1}M_j.
\end{equation}
Also, we convene to set $Z(n,p):=0$.
\end{definition}

It is clear from the definition that for every $n$,
\begin{equation}\label{eq:ineqZnk}
Z(n,k)\geq Z(n,k+1), \quad 0\leq k\leq p-2,
\end{equation}
and
\[
Z(n,k)-Z(n,k+1)=\#\left\{s:\left\lceil\frac{\ell-k}{p+1}\right\rceil\leq s\leq
\left\lfloor \frac{n+p\ell-1-k(p+1)}{p(p+1)}\right\rfloor\right\}.
\]

Exact formulas for the quantity $Z(n,k)$ are involved; the reader may look at the expressions that appear
in Lemma 2.13 and Proposition 2.17 from \cite{LopMin}. However, these formulas are not needed in the present paper.
We just note here that
\begin{equation}\label{asympZnk}
Z(n,k)=\frac{n(p-k)}{p(p+1)}+O(1),\qquad n\rightarrow\infty.
\end{equation}
Later it will be important for our study of ratio asymptotics to analyze the difference $Z(n+1,k)-Z(n,k)$.

\subsection{More properties of the functions of the second kind, and the polynomials $P_{n,k}$}

In this paper we will need some more properties of the functions $\psi_{n,k}$ that were proved in \cite{LopMin}.
We gather some of them in the next result.

\begin{proposition}\label{prop:lpsinksummary}
The following properties hold:
\begin{itemize}
\item[1)] Let $1\leq k\leq p$, and suppose that $n\equiv\ell\mod(p+1)$. Then, as
$z\to\infty$,
\begin{align}\label{decayinfpsink}
\psi_{n,k}(z) & =O(z^{-N(n,k)}),
\end{align}
where
\begin{equation}\label{rel:NnkZnk}
 N(n,k)=\begin{cases}
         Z(n,k-1)-Z(n,k), & \ell<k,\\
         Z(n,k-1)-Z(n,k)+1, & k\leq \ell,
        \end{cases}
\end{equation}
recall $Z(n,p)=0$.
\item[2)] For each $n\geq 0$ and $k=0,\ldots,p-1$, the function $\psi_{n,k}$ has exactly $Z(n,k)$
zeros in $\mathbb{C}\setminus([a_{k-1},b_{k-1}]\cup\{0\})$; they are all simple
and lie in the open interval $(a_{k},b_{k})$. The function $\psi_{n,p}$ has no
zeros in $\mathbb{C}\setminus([a_{p-1},b_{p-1}]\cup\{0\})$.
\item[3)] Let $a_n$, $n\geq p$, be the coefficients of the recurrence
relation \eqref{threetermrec}. For every $n\geq p$, $0\leq k\leq p$, we have
\begin{equation}\label{threetermrecsecondkind}
z\Psi_{n,k}(z)=\Psi_{n+1,k}(z)+a_n\Psi_{n-p,k}(z),
\end{equation}
and if $n\equiv\ell\mod(p+1)$, $0\leq \ell \leq p-1$, then
\begin{equation}\label{threetermrecsecondkindpequena1}
\psi_{n,k}(z)=\psi_{n+1,k}(z)+a_n\psi_{n-p,k}(z),
\end{equation}
while if $n\equiv p\mod(p+1)$, then
\begin{equation}\label{threetermrecsecondkindpequena2}
z\psi_{n,k}(z)=\psi_{n+1,k}(z)+a_n\psi_{n-p,k}(z).
\end{equation}
\end{itemize}
\end{proposition}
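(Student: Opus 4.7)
My plan is to address the three parts in the order (3), (1), (2), since (3) is purely algebraic, (1) extracts the precise order of the Laurent expansion of $\psi_{n,k}$ at infinity, and (2) combines these with the orthogonality. For part (3), the recurrence \eqref{threetermrecsecondkind} for $\Psi_{n,k}$ follows by a direct induction on $k$ using Definition~\ref{definitionPsi}: the Cauchy transform against the fixed measure $\sigma_{k-1}$ is a linear operator, so integrating \eqref{threetermrec} term by term propagates the three-term relation from $\Psi_{n,0}=Q_n$ to all higher $k$. To deduce the recurrences for $\psi_{n,k}$, I substitute $\Psi_{n,k}(z)=z^{\ell-k}\psi_{n,k}(z^{p+1})$ from \eqref{modifiedPsink} and use the arithmetic fact that $n+1$ and $n-p$ share the same residue modulo $p+1$, equal to $\ell+1$ when $\ell<p$ and to $0$ when $\ell=p$. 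In the case $\ell<p$ the common prefactor $z^{\ell+1-k}$ cancels from every term and gives \eqref{threetermrecsecondkindpequena1}; in the case $\ell=p$ the exponent imbalance produces a surplus factor $z^{p+1}$ on the left, which becomes the leading $w$ of \eqref{threetermrecsecondkindpequena2} after the substitution $w=z^{p+1}$.

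For part (1), the strategy is to expand the Cauchy kernel $1/(z-\tau)$ as a geometric series in $1/z$ inside the integral representation \eqref{recurrenceforpsipequena} and invoke the orthogonality \eqref{orthogredPsink} for $\psi_{n,k-1}$ with $j=k-1$. When $\ell<k$ the representation carries an extra factor of $z$ outside the integral and the reference measure reduces to $d\mu_{k-1,k-1}$; when $k\le\ell$ the $z$-prefactor is absent but the measure becomes $\tau\,d\mu_{k-1,k-1}$. A direct inspection of the ceiling in \eqref{countingcond} at $j=k-1$ shows that the orthogonality range of exponents $s$ starts at $0$ in the first case and at $1$ in the second, so in either case $\int\psi_{n,k-1}(\tau)\tau^s\,d\mu_{k-1,k-1}(\tau)$ vanishes for $M_{k-1}=Z(n,k-1)-Z(n,k)$ consecutive integers. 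Tracking the external $z$-prefactor in the first case and the shift $s\mapsto s+1$ in the second then produces leading order $z^{-N(n,k)}$ with $N(n,k)$ exactly as in \eqref{rel:NnkZnk}.

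For part (2), I would proceed by upward induction on $k$. The base case $k=0$ is Proposition~\ref{prop:Qnsummary}(2): $\psi_{n,0}=\mathcal{Q}_d$ is a polynomial of degree $d=Z(n,0)$ with all zeros simple and in $(a_0,b_0)$. For the inductive step $1\le k\le p-1$, I exploit that the row $(\mu_{k,k},\ldots,\mu_{k,p-1})$ of the hierarchy \eqref{hiermu} is itself a Nikishin system on $[a_k,b_k]$, hence an AT (Chebyshev) system: if $\psi_{n,k}$ had fewer than $Z(n,k)$ sign changes in $(a_k,b_k)$, the AT property would furnish polynomials $P_j$, $k\le j\le p-1$, with $\deg P_j\le M_j-1$, whose combination shares the sign of $\psi_{n,k}$ throughout the support, so that $\sum_{j=k}^{p-1}\int P_j(\tau)\psi_{n,k}(\tau)\,d\mu_{k,j}(\tau)\neq 0$, contradicting \eqref{orthogredPsink}. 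The matching upper bound on the number of zeros, together with simplicity and confinement to $(a_k,b_k)$ rather than elsewhere in $\mathbb{C}\setminus([a_{k-1},b_{k-1}]\cup\{0\})$, follows from part (1) combined with the analyticity domain of $\psi_{n,k}$ and an argument-principle count. The terminal case $k=p$ then reduces to $Z(n,p)=0$: no sign changes are forced and no further interval is available to host zeros.

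The main obstacle is the AT bookkeeping in part (2): the orthogonality lower endpoint $\lceil(\ell-j)/(p+1)\rceil$ in \eqref{orthogredPsink} is generally positive, so the admissible degrees of the test polynomials $P_j$ are shifted and depend on both $\ell$ and $j$. Verifying that the correspondingly shifted Nikishin system still enjoys the AT property, and that the construction of the test polynomial respects the exact combined degree budget $\sum_{j=k}^{p-1}M_j=Z(n,k)$, requires a careful adaptation of the classical real-line AT argument; this is where the bulk of the technical work will lie.
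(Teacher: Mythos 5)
First, a point of reference: the paper does not prove Proposition~\ref{prop:lpsinksummary} at all --- it is a summary of Propositions 2.18, 2.19 and 3.2 of \cite{LopMin}, so the only internal benchmark is the closely parallel machinery the paper itself deploys in Section~\ref{sec:interlacing} for the combinations $G_{n,k}=A\psi_{n,k}+B\psi_{n+1,k}$. Measured against that, your part (1) is correct and is the standard route: expand the Cauchy kernel in \eqref{recurrenceforpsipequena}, invoke \eqref{orthogredPsink} with $j=k-1$, and observe that the ceiling $\lceil(\ell-(k-1))/(p+1)\rceil$ equals $0$ when $\ell<k$ and $1$ when $k\leq\ell$; your bookkeeping does yield \eqref{rel:NnkZnk}. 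Likewise the reduction of \eqref{threetermrecsecondkind} to \eqref{threetermrecsecondkindpequena1}--\eqref{threetermrecsecondkindpequena2} via $\Psi_{n,k}(z)=z^{\ell-k}\psi_{n,k}(z^{p+1})$ and the fact that $n+1$ and $n-p$ share the residue $\ell+1$ (resp.\ $0$) modulo $p+1$ is sound.

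Two steps are genuinely incomplete. In part (3), linearity of the Cauchy transform does not by itself propagate $zQ_{n}=Q_{n+1}+a_nQ_{n-p}$ up the tower: writing $t/(z-t)=z/(z-t)-1$, integration of the left-hand side against $d\sigma_{k}$ produces $z\Psi_{n,k+1}(z)-\int_{\Gamma_k}\Psi_{n,k}(t)\,d\sigma_{k}(t)$, and you must invoke \eqref{orthog:Psink} (with $j=k$, $l=0$, available because $n\geq p\geq k+1$) to kill the extra moment; without this the induction on $k$ does not close. In part (2), the lower bound via the AT property is the right idea --- it is precisely Lemma~\ref{lemma:orthosystem}, whose parameter $\delta$ absorbs the shifted starting exponent $\lceil(\ell-j)/(p+1)\rceil\in\{0,1\}$ that you flag as the main obstacle --- but the matching upper bound cannot be obtained by ``an argument-principle count'': $\psi_{n,k}$ is not meromorphic across $[a_{k-1},b_{k-1}]$ and you have no a priori control of its winding near that segment. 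The mechanism that actually works, used both in \cite{LopMin} and in the proof of Lemma~\ref{lemma:interlac:1} here, is a descent: assume $\psi_{n,k}$ has too many zeros off $[a_{k-1},b_{k-1}]\cup\{0\}$, divide by the monic polynomial $L$ they generate, use the decay \eqref{decayinfpsink} together with $\deg L$ to see that $\int_{\gamma}z^{j}\psi_{n,k}(z)/L(z)\,dz=0$ for sufficiently many $j$ on a contour $\gamma$ surrounding $[a_{k-1},b_{k-1}]$, and convert this via Cauchy's formula and Fubini into surplus orthogonality conditions for $\psi_{n,k-1}$, contradicting the zero count one level down. Until that descent is made explicit, the upper bound --- and with it the simplicity and localization of the zeros and the terminal case $k=p$ --- is unsupported.
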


See Propositions 2.18, 2.19 and 3.2 in \cite{LopMin} for a proof of these properties.

As in \cite{LopMin}, an important role in the asymptotic analysis will be played
by certain monic polynomials associated with the functions $\psi_{n,k}$ that we define next.

\begin{definition}\label{Def:polyPnk}
For any integers $n\geq 0$ and $k$ with $0\leq k\leq p-1$, let $P_{n,k}$ denote
the monic polynomial whose zeros are the zeros of $\psi_{n,k}$ in
$(a_{k},b_{k})$. For convenience we also define the polynomials $P_{n,-1}\equiv
1$, $P_{n,p}\equiv 1$.
\end{definition}

Hence by Proposition~\ref{prop:lpsinksummary} we know that $P_{n,k}$ has
degree $Z(n,k)$ and all its zeros are simple. Recall that by Definition~\ref{definitionpsi}, $P_{n,0}=\psi_{n,0}$
is the polynomial $\mathcal{Q}_{d}$ that appears in \eqref{eq:decompQn}, and therefore
\begin{equation}\label{eq:formZn0}
Z(n,0)=\deg(P_{n,0})=\left\lfloor\frac{n}{p+1}\right\rfloor.
\end{equation}

The main purpose for introducing the polynomials $P_{n,k}$ is to prove certain
orthogonality conditions satisfied by the functions $\psi_{n,k}$ with respect
to varying measures involving these polynomials.

\begin{proposition}
Let $0\leq k\leq p-1$. Then, the function $\psi_{n,k}$ satisfies the following orthogonality conditions:
\begin{equation}\label{varyorthog:psink:1}
\int_{a_{k}}^{b_{k}}\psi_{n,k}(\tau)\,\tau^{s}\,\frac{d\sigma_{n,k}(\tau)}{P_{n,
k+1}(\tau)}=0,\qquad s=0,\ldots,Z(n,k)-1,
\end{equation}
recall \eqref{varymeas:sigma}.
\end{proposition}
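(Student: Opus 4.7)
I will reduce the stated orthogonality to two ingredients already established in the excerpt: the ``flat'' orthogonality of $\psi_{n,k}$ against ordinary powers of $\tau$ (the $j=k$ case of \eqref{orthogredPsink}), and the vanishing of certain Cauchy transforms at the zeros of $P_{n,k+1}$ (which comes from the integral representation \eqref{recurrenceforpsipequena} for $\psi_{n,k+1}$).

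Fix $s$ with $0\leq s\leq Z(n,k)-1$ and perform the Euclidean division
\[
\tau^{s}=P_{n,k+1}(\tau)\,q_{s}(\tau)+r_{s}(\tau),
\]
so that $\deg r_{s}<Z(n,k+1)$ and, using \eqref{eq:ineqZnk}, $\deg q_{s}\leq s-Z(n,k+1)\leq Z(n,k)-Z(n,k+1)-1=M_{k}-1$ (with $q_{s}\equiv 0$ when $s<Z(n,k+1)$). Splitting the target integral accordingly, it suffices to prove that both
\[
A_{s}:=\int_{a_{k}}^{b_{k}}\psi_{n,k}(\tau)\,q_{s}(\tau)\,d\sigma_{n,k}(\tau) \qquad\text{and}\qquad B_{s}:=\int_{a_{k}}^{b_{k}}\psi_{n,k}(\tau)\,\frac{r_{s}(\tau)}{P_{n,k+1}(\tau)}\,d\sigma_{n,k}(\tau)
\]
vanish.

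For $A_{s}$ it is enough to check $\int\psi_{n,k}(\tau)\,\tau^{i}\,d\sigma_{n,k}(\tau)=0$ for $0\leq i\leq M_{k}-1$. This is precisely the $j=k$ case of \eqref{orthogredPsink} after unwinding \eqref{varymeas:sigma}: when $\ell\leq k$ one has $d\sigma_{n,k}=d\sigma_{k}^{*}=d\mu_{k,k}$ and the admissible range in \eqref{orthogredPsink} is exactly $0\leq i\leq M_{k}-1$ (the lower bound $\lceil(\ell-k)/(p+1)\rceil$ equals $0$); when $\ell>k$ one has $d\sigma_{n,k}=\tau\,d\mu_{k,k}$, converting the condition to $\int\psi_{n,k}\,\tau^{i+1}\,d\mu_{k,k}=0$ for $1\leq i+1\leq M_{k}$, which is again the same range (now the lower bound $\lceil(\ell-k)/(p+1)\rceil$ equals $1$).

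For $B_{s}$, expand by partial fractions
\[
\frac{r_{s}(\tau)}{P_{n,k+1}(\tau)}=\sum_{i=1}^{Z(n,k+1)}\frac{r_{s}(z_{i})}{P_{n,k+1}'(z_{i})}\,\frac{1}{\tau-z_{i}},
\]
where the $z_{i}$ are the simple zeros of $P_{n,k+1}$, lying in the open interval $(a_{k+1},b_{k+1})$ and hence nonzero by the sign constraints in \eqref{relpositint}. Evaluating \eqref{recurrenceforpsipequena} at $z_{i}$ gives
\[
0=\psi_{n,k+1}(z_{i})=z_{i}^{\epsilon}\int_{a_{k}}^{b_{k}}\frac{\psi_{n,k}(\tau)}{z_{i}-\tau}\,d\sigma_{n,k}(\tau)
\]
with $\epsilon\in\{0,1\}$ depending on whether $\ell<k+1$ or $\ell\geq k+1$; since $z_{i}\neq 0$, each Cauchy integral on the right vanishes, and therefore $B_{s}=0$. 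The only genuinely delicate step is the bookkeeping in the polynomial piece, namely matching the shift in \eqref{varymeas:sigma} against the lower limit $\lceil(\ell-k)/(p+1)\rceil$ in \eqref{orthogredPsink} across the cases $\ell\leq k$ and $\ell>k$; once this is checked, the rational piece is essentially automatic.
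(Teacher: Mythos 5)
Your proof is correct: the paper does not reprove this proposition but defers to \cite[Proposition 2.21]{LopMin}, and your argument --- Euclidean division of $\tau^{s}$ by $P_{n,k+1}$, the $j=k$ case of \eqref{orthogredPsink} for the polynomial part, and the vanishing of $\psi_{n,k+1}$ at the nonzero simple zeros of $P_{n,k+1}$ via \eqref{recurrenceforpsipequena} for the rational part --- is precisely the standard argument behind that citation. The case bookkeeping is handled correctly, in particular the fact that the dichotomy $\ell<k+1$ versus $k+1\le\ell$ in \eqref{recurrenceforpsipequena} coincides with the dichotomy $\ell\le k$ versus $k<\ell$ in \eqref{varymeas:sigma}, and that the lower bound $\lceil(\ell-k)/(p+1)\rceil$ in \eqref{orthogredPsink} absorbs the extra factor of $\tau$.
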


For a justification of \eqref{varyorthog:psink:1}, see Proposition 2.21 in \cite{LopMin}.

Observe that by definition of $Z(n,k)$, the total number of
orthogonality conditions in \eqref{orthogredPsink} agrees with
the number of orthogonality conditions in \eqref{varyorthog:psink:1},
but the advantage of \eqref{varyorthog:psink:1} is clear since it
involves only one orthogonality measure.

\subsection{The auxiliary functions $H_{n,k}$}

In this subsection we introduce certain functions that will play an important role
in the analysis that will follow.

\begin{definition}\label{defHnk}For integers $n\geq 0$ and $0\leq k\leq p$,
set
\begin{equation}\label{eq:def:Hnk}
H_{n,k}(z):=\frac{P_{n,k-1}(z)\,\psi_{n,k}(z)}{P_{n,k}(z)}.
\end{equation}
\end{definition}

Note that $H_{n,0}\equiv 1$. Since the zeros of $P_{n,k}$ are zeros of
$\psi_{n,k}$ outside $[a_{k-1},b_{k-1}]$, we have
\[
H_{n,k}\in\mathcal{H}(\mathbb{C}\setminus[a_{k-1},b_{k-1}]),\quad 1\leq k\leq
p.
\]

Putting together \eqref{varymeas:sigma}, \eqref{varyorthog:psink:1}, and \eqref{eq:def:Hnk}, we readily obtain
the following result.

\begin{proposition}\label{orthoPnk}
For any $k=0,\ldots,p-1$, the polynomial $P_{n,k}$ satisfies the following
orthogonality conditions:
\[
\int_{a_{k}}^{b_{k}}P_{n,k}(\tau)\,\tau^{s}\,\frac{H_{n,k}(\tau)\,d\sigma_{n,k}
(\tau)}{P_{n,k-1}(\tau)\,P_{n,k+1}(\tau)}=0,\qquad s=0,\ldots,Z(n,k)-1.
\]
Recall that $P_{n,-1}, P_{n,p}\equiv 1$.
\end{proposition}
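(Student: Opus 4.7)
The plan is to show that Proposition~\ref{orthoPnk} is essentially a direct rewrite of the already-established orthogonality relations \eqref{varyorthog:psink:1}, after substituting the definition of $H_{n,k}$. From Definition~\ref{defHnk} we have, away from the zeros of $P_{n,k}$,
\[
\psi_{n,k}(\tau)=\frac{H_{n,k}(\tau)\,P_{n,k}(\tau)}{P_{n,k-1}(\tau)}.
\]
Substituting this expression into \eqref{varyorthog:psink:1}, which asserts
\[
\int_{a_{k}}^{b_{k}}\psi_{n,k}(\tau)\,\tau^{s}\,\frac{d\sigma_{n,k}(\tau)}{P_{n,k+1}(\tau)}=0,\qquad s=0,\ldots,Z(n,k)-1,
\]
immediately yields the desired identity. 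For the boundary cases $k=0$ and $k=p-1$, one uses the conventions $P_{n,-1}\equiv 1$ and $P_{n,p}\equiv 1$, together with the fact that $H_{n,0}\equiv 1$ (since then $\psi_{n,0}=P_{n,0}$), so the statement reduces to \eqref{varyorthog:psink:1} verbatim in the endpoint case $k=0$.

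The only point that needs checking is that the integrand in the new formulation is well-defined on $[a_{k},b_{k}]$, so that the substitution is legitimate and the resulting integral makes sense. This is where part (2) of Proposition~\ref{prop:lpsinksummary} enters: the zeros of $\psi_{n,k-1}$ (respectively $\psi_{n,k+1}$) lying outside $[a_{k-2},b_{k-2}]\cup\{0\}$ (respectively $[a_{k},b_{k}]\cup\{0\}$) are located in the open interval $(a_{k-1},b_{k-1})$ (respectively $(a_{k+1},b_{k+1})$). Since by the standing hypothesis $\Gamma_{j}\cap\Gamma_{j+1}=\emptyset$, which translates to $[a_{j},b_{j}]\cap[a_{j+1},b_{j+1}]=\emptyset$ for the push-forward intervals, neither $P_{n,k-1}$ nor $P_{n,k+1}$ vanishes on $[a_{k},b_{k}]$. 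Thus the weight $H_{n,k}(\tau)\,d\sigma_{n,k}(\tau)/(P_{n,k-1}(\tau)P_{n,k+1}(\tau))$ is a well-defined (signed, generally) measure on $[a_{k},b_{k}]$, and the substitution is valid. There is no genuine obstacle here; the proposition is a notational repackaging of \eqref{varyorthog:psink:1} that prepares the ground for the varying-measure analysis to come.
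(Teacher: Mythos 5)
Your proposal is correct and coincides with the paper's own argument: the paper states that the proposition follows by "putting together" \eqref{varymeas:sigma}, \eqref{varyorthog:psink:1}, and \eqref{eq:def:Hnk}, which is exactly the substitution $\psi_{n,k}=H_{n,k}P_{n,k}/P_{n,k-1}$ you perform. Your added check that $P_{n,k\pm1}$ do not vanish on $[a_k,b_k]$ is a harmless (and valid) elaboration of a point the paper leaves implicit.
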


The function $H_{n,k}$ has an integral representation that is analogous
to the integral representation of the function $\psi_{n,k}$ in \eqref{recurrenceforpsipequena}.

\begin{proposition}
Let $1\leq k\leq p$ and $n\equiv \ell \mod(p+1)$, $0\leq \ell\leq p$. Then,
\begin{equation}\label{intrep:Hnk}
H_{n,k}(z)=\begin{cases}
z\int_{a_{k-1}}^{b_{k-1}}\frac{P_{n,k-1}^{2}(\tau)}{z-\tau}\,\frac{H_{n,k-1}
(\tau)\,d\sigma_{n,k-1}(\tau)}{P_{n,k-2}(\tau)\,P_{n,k}(\tau)}, & \ell<k,\\[1em]
\int_{a_{k-1}}^{b_{k-1}}\frac{P_{n,k-1}^{2}(\tau)}{z-\tau}\,\frac{H_{n,k-1}
(\tau)\,d\sigma_{n,k-1}(\tau)}{P_{n,k-2}(\tau)\,P_{n,k}(\tau)}, &
k\leq \ell.\\
\end{cases}
\end{equation}
\end{proposition}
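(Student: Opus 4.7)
The plan is to begin from the definition $H_{n,k}(z) = P_{n,k-1}(z)\psi_{n,k}(z)/P_{n,k}(z)$ in \eqref{eq:def:Hnk}, substitute the integral representation \eqref{recurrenceforpsipequena} for $\psi_{n,k}$, and then rewrite the integrand by means of Definition~\ref{defHnk} applied to $H_{n,k-1}$, namely
\[
\psi_{n,k-1}(\tau) = \frac{H_{n,k-1}(\tau)\,P_{n,k-1}(\tau)}{P_{n,k-2}(\tau)}.
\]
In the case $\ell < k$ this produces
\[
H_{n,k}(z) = \frac{z\,P_{n,k-1}(z)}{P_{n,k}(z)} \int_{a_{k-1}}^{b_{k-1}} \frac{P_{n,k-1}(\tau)\,H_{n,k-1}(\tau)\,d\sigma_{n,k-1}(\tau)}{P_{n,k-2}(\tau)\,(z-\tau)},
\]
and an analogous identity without the outer factor of $z$ in the case $k\leq \ell$. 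The target identity \eqref{intrep:Hnk} differs from this expression only in that the rational factor $P_{n,k-1}(z)/P_{n,k}(z)$ sitting outside the integral is replaced by $P_{n,k-1}(\tau)/P_{n,k}(\tau)$ inside the integral. The whole content of the proof is therefore to justify this interchange.

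The standard tool for such an interchange is orthogonality. I would use the algebraic decomposition
\[
\frac{P_{n,k-1}(z)}{P_{n,k}(z)} - \frac{P_{n,k-1}(\tau)}{P_{n,k}(\tau)} = \frac{P_{n,k-1}(z)P_{n,k}(\tau) - P_{n,k-1}(\tau)P_{n,k}(z)}{P_{n,k}(z)\,P_{n,k}(\tau)},
\]
and observe that the numerator vanishes at $\tau=z$, so
\[
q(\tau,z) := \frac{P_{n,k-1}(z)P_{n,k}(\tau) - P_{n,k-1}(\tau)P_{n,k}(z)}{z-\tau}
\]
is a polynomial in $\tau$ of degree at most $\max\{Z(n,k-1),Z(n,k)\}-1 = Z(n,k-1)-1$, where the final equality uses \eqref{eq:ineqZnk}. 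After this substitution, the discrepancy between the two expressions for $H_{n,k}(z)$ reduces, up to the prefactor $z^{\epsilon}/P_{n,k}(z)$, to
\[
\int_{a_{k-1}}^{b_{k-1}} P_{n,k-1}(\tau)\,q(\tau,z)\,\frac{H_{n,k-1}(\tau)\,d\sigma_{n,k-1}(\tau)}{P_{n,k-2}(\tau)\,P_{n,k}(\tau)}.
\]

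To finish, I would invoke Proposition~\ref{orthoPnk} with $k$ replaced by $k-1$, which asserts exactly that
\[
\int_{a_{k-1}}^{b_{k-1}} P_{n,k-1}(\tau)\,\tau^{s}\,\frac{H_{n,k-1}(\tau)\,d\sigma_{n,k-1}(\tau)}{P_{n,k-2}(\tau)\,P_{n,k}(\tau)} = 0, \qquad s=0,\ldots,Z(n,k-1)-1.
\]
Since $q(\cdot,z)$ has degree at most $Z(n,k-1)-1$ in $\tau$ for each fixed $z$, the discrepancy integrates to zero, which yields \eqref{intrep:Hnk}. The boundary cases $k=1$ (where $P_{n,-1}\equiv 1$ and $H_{n,0}\equiv 1$, so the orthogonality collapses to \eqref{varyorthog:psink:1} at $k=0$) and $k=p$ (where $P_{n,p}\equiv 1$) are handled by the same computation. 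There is no genuine obstacle in this argument; the only care needed is the degree-counting step, where the inequality $Z(n,k-1)\geq Z(n,k)$ must be matched exactly with the number of orthogonality conditions available for $P_{n,k-1}$. The prefactor $z^{\epsilon}$ with $\epsilon\in\{0,1\}$ distinguishing the two cases $\ell<k$ and $k\leq \ell$ is inert in the manipulation and propagates transparently to the final formula.
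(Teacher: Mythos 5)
Your argument is correct. Note first that this paper does not actually prove \eqref{intrep:Hnk}: it is quoted from \cite[Proposition 2.25]{LopMin}, so there is no in-paper proof to compare against. Your route is the natural one and it closes: starting from $H_{n,k}=P_{n,k-1}\psi_{n,k}/P_{n,k}$, inserting \eqref{recurrenceforpsipequena}, and rewriting $\psi_{n,k-1}=H_{n,k-1}P_{n,k-1}/P_{n,k-2}$ reduces the claim to showing that
\[
\int_{a_{k-1}}^{b_{k-1}} P_{n,k-1}(\tau)\,q(\tau,z)\,\frac{H_{n,k-1}(\tau)\,d\sigma_{n,k-1}(\tau)}{P_{n,k-2}(\tau)\,P_{n,k}(\tau)}=0,
\]
and your divided-difference polynomial $q(\cdot,z)$ has degree at most $\max\{Z(n,k-1),Z(n,k)\}-1=Z(n,k-1)-1$ in $\tau$ by \eqref{eq:ineqZnk} (which also covers $k=p$ via the convention $Z(n,p)=0$), so Proposition~\ref{orthoPnk} applied at level $k-1$ kills the integral. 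The degree count matches the number of available orthogonality conditions exactly, which is the only delicate point, and you identified it. Two small remarks: (i) invoking Proposition~\ref{orthoPnk} is not circular, since the underlying orthogonality \eqref{varyorthog:psink:1} is Proposition 2.21 of \cite{LopMin} and thus logically precedes the formula being proved there as Proposition 2.25; (ii) an alternative standard proof would subtract the two sides, check via Plemelj that the jumps across $[a_{k-1},b_{k-1}]$ cancel, and conclude by Liouville from the decay at infinity --- your orthogonality argument avoids any boundary-value analysis and uses only results already quoted in this paper, which is arguably cleaner in this context.
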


Formula \eqref{intrep:Hnk} was proved in \cite[Proposition 2.25]{LopMin}.

\subsection{Normalization}

In this subsection we introduce a convenient normalization of the polynomials
$P_{n,k}$ and the functions $H_{n,k}$.

It follows from the definition of the functions $H_{n,k}$ and the polynomials
$P_{n,k}$ that the measures
\[
\frac{H_{n,k}(\tau)\,d\sigma_{n,k}(\tau)}{P_{n,k-1}(\tau)\,P_{n,k+1}(\tau)},\qquad 0\leq k\leq p-1,
\]
have constant sign on the interval $[a_{k}, b_{k}]$. We then denote by
\[
\frac{|H_{n,k}(\tau)|\,d|\sigma_{n,k}|(\tau)}{|P_{n,k-1}(\tau)\,P_{n,k+1}(\tau)|
}
\]
the positive normalization of this measure and we have
\begin{equation}\label{eq:normvaryorthog:Pnk}
\int_{a_{k}}^{b_{k}}P_{n,k}(\tau)\,\tau^{s}\,\frac{|H_{n,k}(\tau)|\,d|\sigma_{n,
k}|(\tau)}{|P_{n,k-1}(\tau)\,P_{n,k+1}(\tau)|}=0,\quad
s=0,\ldots,Z(n,k)-1,\quad k=0,\ldots,p-1.
\end{equation}

Let
\begin{align}
K_{n,-1} & :=1,\quad K_{n,p}:=1,\label{eq:def:Knm1}\\
K_{n,k} & :=
\left(\int_{a_{k}}^{b_{k}}P_{n,k}^{2}(\tau)\,\frac{|H_{n,k}(\tau)|\,d|\sigma_{n,
k}|(\tau)}{|P_{n,k-1}(\tau)\,P_{n,k+1}(\tau)|}\right)^{-1/2},\qquad
k=0,\ldots,p-1,\label{eq:def:Knk}
\end{align}
and we also define the constants
\begin{equation}\label{def:kappank}
\kappa_{n,k}:=\frac{K_{n,k}}{K_{n,k-1}},\qquad k=0,\ldots,p.
\end{equation}

\begin{definition}
For $k=0,\ldots,p,$ we define
\begin{align}
p_{n,k} & :=\kappa_{n,k}\,P_{n,k},\label{eq:def:lpnk}\\
h_{n,k} & := K_{n,k-1}^{2}\,H_{n,k},\label{eq:def:hnk}
\end{align}
where the constants $\kappa_{n,k}$ and $K_{n,k}$ are given in
\eqref{def:kappank} and \eqref{eq:def:Knm1}--\eqref{eq:def:Knk}, respectively.
\end{definition}

We will denote by $\nu_{n,k}$ the measure on $[a_{k},b_{k}]$ given by
\begin{equation}\label{def:measnunk}
d\nu_{n,k}(\tau):=\frac{h_{n,k}(\tau)\,d\sigma_{n,k}(\tau)}{P_{n,k-1}(\tau)\,P_{
n,k+1}(\tau)},\qquad k=0,\ldots,p-1.
\end{equation}
Again this measure has constant sign in $[a_{k},b_{k}]$, and we will denote by
$\varepsilon_{n,k}$ its sign and by $|\nu_{n,k}|$ its positive normalization,
hence
\begin{equation}\label{positivenunk}
d|\nu_{n,k}|(\tau)=\frac{|h_{n,k}(\tau)|\,d|\sigma_{n,k}|(\tau)}{|P_{n,k-1}
(\tau)\,P_{n,k+1}(\tau)|}=\varepsilon_{n,k}\,\frac{h_{n,k}(\tau)\,d\sigma_{n,k}
(\tau)}{P_{n,k-1}(\tau)\,P_{n,k+1}(\tau)}.
\end{equation}
An exact formula for $\varepsilon_{n,k}$ is given in \eqref{eq:formulaenk}.

\begin{proposition}
For each $k=0,\ldots,p-1,$ the polynomial $p_{n,k}$ defined in
\eqref{eq:def:lpnk} satisfies the following:
\begin{align}
\int_{a_{k}}^{b_{k}} p_{n,k}(\tau)\,\tau^{s}\,d|\nu_{n,k}|(\tau) & =0,\qquad
s=0,\ldots,Z(n,k)-1,\label{orthog:lpnk}\\
\int_{a_{k}}^{b_{k}} p_{n,k}^{2}(\tau)\,d|\nu_{n,k}|(\tau) &
=1,\label{orthonorm:lpnk}
\end{align}
that is, $p_{n,k}$ is the orthonormal polynomial of degree $Z(n,k)$ with
respect to the positive measure $|\nu_{n,k}|$.

For each $k=1,\ldots,p$, the function $h_{n,k}$ defined in \eqref{eq:def:hnk}
satisfies
\begin{equation}\label{intrep:hnk}
h_{n,k}(z)=\begin{cases}
\varepsilon_{n,k-1}\,z\int_{a_{k-1}}^{b_{k-1}}\frac{p_{n,k-1}^{2}(\tau)}{z-\tau}
\,d|\nu_{n,k-1}|(\tau) & \ell<k,\\[1em]
\varepsilon_{n,k-1}\int_{a_{k-1}}^{b_{k-1}}\frac{p_{n,k-1}^{2}(\tau)}{z-\tau}\,
d|\nu_{n,k-1}|(\tau) & k\leq \ell.
\end{cases}
\end{equation}
\end{proposition}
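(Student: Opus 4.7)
The proposition is essentially a normalization statement packaged in three parts; my plan is to treat each part as a direct consequence of definitions, being careful only about the constant factors $K_{n,k}, \kappa_{n,k}$ and the sign $\varepsilon_{n,k}$.

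For the orthogonality \eqref{orthog:lpnk}, I would start from Proposition~\ref{orthoPnk}, which asserts that for $0\leq s\leq Z(n,k)-1$
\[
\int_{a_{k}}^{b_{k}}P_{n,k}(\tau)\,\tau^{s}\,\frac{H_{n,k}(\tau)\,d\sigma_{n,k}(\tau)}{P_{n,k-1}(\tau)\,P_{n,k+1}(\tau)}=0.
\]
By \eqref{eq:def:hnk} and \eqref{def:measnunk} the integrating measure above equals $K_{n,k-1}^{-2}\,d\nu_{n,k}$, and multiplying $P_{n,k}$ by the positive constant $\kappa_{n,k}$ does not affect the vanishing. Since $d\nu_{n,k}=\varepsilon_{n,k}\,d|\nu_{n,k}|$ by \eqref{positivenunk}, we may replace $d\nu_{n,k}$ by $d|\nu_{n,k}|$ up to a nonzero sign. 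This yields \eqref{orthog:lpnk}.

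For the normalization \eqref{orthonorm:lpnk}, I plan a direct unwinding. Write
\[
\int_{a_{k}}^{b_{k}}p_{n,k}^{2}\,d|\nu_{n,k}|
=\kappa_{n,k}^{2}\int_{a_{k}}^{b_{k}}P_{n,k}^{2}\,\frac{|h_{n,k}|\,d|\sigma_{n,k}|}{|P_{n,k-1}\,P_{n,k+1}|}
=\kappa_{n,k}^{2}\,K_{n,k-1}^{2}\int_{a_{k}}^{b_{k}}P_{n,k}^{2}\,\frac{|H_{n,k}|\,d|\sigma_{n,k}|}{|P_{n,k-1}\,P_{n,k+1}|},
\]
where in the first equality I used \eqref{eq:def:lpnk}--\eqref{positivenunk} and in the second I used that $h_{n,k}$ and $H_{n,k}$ differ by the positive constant $K_{n,k-1}^{2}$. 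By \eqref{eq:def:Knk} the remaining integral equals $K_{n,k}^{-2}$, so the whole expression is $(\kappa_{n,k}\,K_{n,k-1}/K_{n,k})^{2}=1$ by \eqref{def:kappank}.

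The integral representation \eqref{intrep:hnk} is the part that requires the most careful bookkeeping, and I expect it to be the only place where one might slip on a constant. I would begin from \eqref{intrep:Hnk}, multiply both sides by $K_{n,k-1}^{2}$, and bring that factor inside the integral. The integrand contains the kernel
\[
\frac{P_{n,k-1}^{2}(\tau)\,H_{n,k-1}(\tau)\,d\sigma_{n,k-1}(\tau)}{P_{n,k-2}(\tau)\,P_{n,k}(\tau)}.
\]
Using $H_{n,k-1}=K_{n,k-2}^{-2}h_{n,k-1}$ from \eqref{eq:def:hnk} and the definition \eqref{def:measnunk} of $\nu_{n,k-1}$, this kernel equals $K_{n,k-2}^{-2}\,P_{n,k-1}^{2}(\tau)\,d\nu_{n,k-1}(\tau)$. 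The key algebraic identity
\[
K_{n,k-2}^{-2}\,P_{n,k-1}^{2}=K_{n,k-1}^{-2}\,p_{n,k-1}^{2},
\]
which is immediate from $p_{n,k-1}=(K_{n,k-1}/K_{n,k-2})P_{n,k-1}$, converts this to $K_{n,k-1}^{-2}\,p_{n,k-1}^{2}\,d\nu_{n,k-1}$. Replacing $d\nu_{n,k-1}$ by $\varepsilon_{n,k-1}\,d|\nu_{n,k-1}|$ and recognizing that the factor $K_{n,k-1}^{-2}$ exactly cancels the $K_{n,k-1}^{2}$ we introduced at the outset yields \eqref{intrep:hnk}. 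The only real obstacle is to verify the $z$ versus $1$ prefactor is preserved, but this is automatic because it depends only on the congruence class of $n$ modulo $p+1$ and is inherited directly from \eqref{intrep:Hnk}.
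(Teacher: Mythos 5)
Your proposal is correct and follows the same route as the paper, which simply notes that \eqref{orthog:lpnk} is immediate from \eqref{eq:normvaryorthog:Pnk} and that \eqref{orthonorm:lpnk} and \eqref{intrep:hnk} follow from \eqref{eq:def:lpnk}--\eqref{eq:def:hnk}, \eqref{def:kappank}, \eqref{eq:def:Knm1}--\eqref{eq:def:Knk} and \eqref{intrep:Hnk}; you have merely written out explicitly the unwinding of constants and signs that the paper leaves to the reader. In particular your identity $K_{n,k-2}^{-2}P_{n,k-1}^{2}=K_{n,k-1}^{-2}p_{n,k-1}^{2}$ and the cancellation $\kappa_{n,k}K_{n,k-1}/K_{n,k}=1$ are exactly the bookkeeping the paper's one-line proof relies on.
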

\begin{proof}
The orthogonality conditions \eqref{orthog:lpnk} are obvious in view of
\eqref{eq:normvaryorthog:Pnk}. The formulas \eqref{orthonorm:lpnk} and
\eqref{intrep:hnk} follow immediately from
\eqref{eq:def:lpnk}--\eqref{eq:def:hnk}, \eqref{def:kappank},
\eqref{eq:def:Knm1}--\eqref{eq:def:Knk} and \eqref{intrep:Hnk}.
\end{proof}

\section{Main results}\label{section:Main}

In this paper we initiate our analysis with the study of the difference $Z(n+1,k)-Z(n,k)=\deg(P_{n+1,k})-\deg(P_{n,k})$, see Section~\ref{sec:diffZnk}. Recall that the zeros of the polynomials $P_{n,k}$ are all simple and lie in the interval $(a_{k},b_{k})$, cf. Definition~\ref{Def:polyPnk} and Proposition~\ref{prop:lpsinksummary}. We show that for each fixed $k=0,\ldots,p-1,$ the difference $Z(n+1,k)-Z(n,k)$ is periodic in $n$ with period $p(p+1)$ and takes only the values $\{-1,0,1\}$, see Lemma~\ref{lem:valuesdiffZnk}.

Our first main result is the interlacing property of the zeros of $P_{n,k}$. This property is proved in Section~\ref{sec:interlacing} and is a consequence of an auxiliary result on the zeros of the function $G_{n,k}$ defined in \eqref{def:Gnk}, see Corollary~\ref{cor:interl}.

\begin{theorem}\label{theo:interl}
Let $(s_{0},\ldots,s_{p-1})=\mathcal{N}(\sigma_{0},\ldots,\sigma_{p-1})$ be a Nikishin system on $\Gamma_{0}$, defined as indicated in Section~\ref{sect:defNik}, and let $(P_{n,k})$ be the system of associated polynomials introduced in Definition~\ref{Def:polyPnk}.
Then for each $k=0,\ldots,p-1$, the zeros of $P_{n,k}$ and $P_{n+1,k}$ interlace; that is, between two consecutive zeros of $P_{n,k}$
there is exactly one zero of $P_{n+1,k}$ and vice versa.
\end{theorem}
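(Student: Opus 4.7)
The plan is to proceed by induction on $k$, using Proposition~\ref{prop:Qnsummary}(5) as the base case and the auxiliary function $G_{n,k}$ from \eqref{def:Gnk} to drive the inductive step. For the base case $k=0$, recall from Proposition~\ref{prop:Qnsummary}(2) that $P_{n,0}=\mathcal{Q}_d$ where $Q_n(z)=z^\ell\mathcal{Q}_d(z^{p+1})$. The interlacing of the non-zero roots of $Q_n$ and $Q_{n+1}$ on $\Gamma_0$ asserted in Proposition~\ref{prop:Qnsummary}(5), combined with the rotational invariance of those zero sets under multiplication by the $(p+1)$-th roots of unity, passes under the substitution $\tau=z^{p+1}$ to interlacing of the zeros of $P_{n,0}$ and $P_{n+1,0}$ on $(a_0,b_0)$.

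For the inductive step I assume that the interlacing of the zeros of $P_{n,k-1}$ and $P_{n+1,k-1}$ on $(a_{k-1},b_{k-1})$ has already been established, and I work with $G_{n,k}$. In analogy with \cite{AptLopRocha,Lop}, I expect $G_{n,k}$ to be a carefully chosen linear combination of $\psi_{n,k}$ and $\psi_{n+1,k}$ (or equivalently of $H_{n,k}/P_{n,k-1}$ and $H_{n+1,k}/P_{n+1,k-1}$), normalized so that it is analytic on $\mathbb{C}\setminus[a_{k-1},b_{k-1}]$ and decays at infinity at the sharp rate dictated by \eqref{decayinfpsink}--\eqref{rel:NnkZnk}. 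Invoking the integral representation \eqref{intrep:Hnk} together with the inductive interlacing at level $k-1$ to pin down the sign of the relevant measure, I would show that $G_{n,k}$ admits enough orthogonality-type identities on $(a_{k-1},b_{k-1})$ to force a prescribed number of sign changes there; combining this with the known decay at infinity then forces every zero of $G_{n,k}$ in $(a_k,b_k)$ to coincide with a zero of the product $P_{n,k}\,P_{n+1,k}$, and conversely each zero of that product to be a simple zero of $G_{n,k}$ in $(a_k,b_k)$. A standard argument then shows that such a zero count is possible only if the zeros of $P_{n,k}$ and $P_{n+1,k}$ alternate on $(a_k,b_k)$.

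The main obstacle is the fine analytic and sign analysis of $G_{n,k}$: verifying that the potential poles at zeros of $P_{n,k-1}\,P_{n+1,k-1}$ on $(a_{k-1},b_{k-1})$ cancel thanks to the induction hypothesis, confirming the sharp decay at infinity, and counting the sign changes on $(a_{k-1},b_{k-1})$ after substitution of \eqref{intrep:Hnk}. Because $Z(n+1,k)-Z(n,k)\in\{-1,0,1\}$ by Lemma~\ref{lem:valuesdiffZnk}, the argument naturally splits into three cases that require slightly different bookkeeping for the degrees and the orthogonality counts. Once the auxiliary zero count for $G_{n,k}$ is established in the form referenced as Corollary~\ref{cor:interl}, Theorem~\ref{theo:interl} follows as a direct consequence.
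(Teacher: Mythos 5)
Your overall architecture matches the paper's: bound the zeros of the auxiliary combination $G_{n,k}$ from above and below, conclude that all its zeros off $[a_{k-1},b_{k-1}]\cup\{0\}$ are real and simple, and extract interlacing; the base case $k=0$ via Proposition~\ref{prop:Qnsummary}(5) is also how the paper frames it. However, the inductive step as you describe it has several concrete problems. First, your induction hypothesis is the wrong statement: knowing only that the zeros of $P_{n,k-1}$ and $P_{n+1,k-1}$ interlace is not enough to run the step from $k-1$ to $k$. What the paper's induction actually carries (Lemma~\ref{lemma:interlac:1}) is an upper bound on the number of zeros, counting multiplicity, of \emph{every} nontrivial combination $A\psi_{n,k-1}+B\psi_{n+1,k-1}$ in $\mathbb{C}\setminus([a_{k-2},b_{k-2}]\cup\{0\})$; the contradiction at level $k$ is obtained by integrating $G_{n,k}/(zL_{n,k})$ over a contour around $[a_{k-1},b_{k-1}]$ (using the decay rates \eqref{decayinfpsink}--\eqref{rel:NnkZnk} and the inequality \eqref{eq:rel:diffZnk}) to manufacture too many orthogonality conditions for $A\psi_{n,k-1}+B\psi_{n+1,k-1}$, violating that upper bound --- not the interlacing --- at level $k-1$. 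Second, your claim that every zero of $G_{n,k}$ in $(a_k,b_k)$ must coincide with a zero of $P_{n,k}\,P_{n+1,k}$ is false for generic $(A,B)$: the zeros of the combination move with $A/B$ and in general lie at neither factor's zeros. Third, there are no poles to cancel: $G_{n,k}$ as defined in \eqref{def:Gnk} is a combination of $\psi_{n,k}$ and $\psi_{n+1,k}$, which are analytic off $[a_{k-1},b_{k-1}]$, so the worry about poles at zeros of $P_{n,k-1}P_{n+1,k-1}$ suggests you are conflating $G_{n,k}$ with a combination of the $H$-functions.

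Two further ingredients are missing. The lower bound on the number of sign changes of $G_{n,k}$ in $(a_k,b_k)$ is not a routine orthogonality count: it rests on the AT-system Lemma~\ref{lemma:orthosystem} applied to the orthogonality relations \eqref{orthogredPsink} shared by $\psi_{n,k}$ and $\psi_{n+1,k}$, with the index bookkeeping of Lemma~\ref{lem:valuesdiffZnk} (the interval comparisons \eqref{relbounds:1}--\eqref{relbounds:10}) determining which exponents are common to both. And the passage from the zero counts to interlacing is not automatic from a statement about a single combination: the paper first rules out common zeros of $\psi_{n,k}$ and $\psi_{n+1,k}$ by choosing $(A,B)$ to force a double zero of $G_{n,k}$, and then runs a Wronskian argument --- for each $y\in(a_k,b_k)$ the combination with $A=\psi_{n+1,k}(y)$, $B=-\psi_{n,k}(y)$ has only simple zeros, so $\psi_{n+1,k}\psi_{n,k}'-\psi_{n,k}\psi_{n+1,k}'$ never vanishes on $(a_k,b_k)$, hence has constant sign, and Bolzano's theorem yields the alternation. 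You would need to supply both of these steps, neither of which appears in your outline, to complete the proof.
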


The following theorem is our main asymptotic result, from which we derive all other asymptotic formulas. In all these formulas, convergence is uniform on compact subsets of the indicated regions, and the periodicity modulo $p(p+1)$ is preserved. Theorem~\ref{ratioP} and the corollaries that follow are all proved in Section~\ref{sec:ratioasymp}. 

\begin{theorem}\label{ratioP}
Assume that for each $k=0,\ldots,p-1$, the measure $\sigma_{k}^{*}$ defined in \eqref{def:sigma:star} has positive Radon-Nikodym derivative with respect to Lebesgue measure a.e. on $[a_{k},b_{k}]$. Then, for each fixed $0\leq \rho\leq p(p+1)-1$,
\begin{equation}\label{limitP}
\lim_{\lambda \to \infty}\frac{P_{\lambda p(p+1)+\rho+1,k}(z)}{P_{\lambda p(p+1)+\rho,k}(z)}=\widetilde{F}_{k}^{(\rho)}(z),\qquad z\in\mathbb{C}\setminus[a_{k},b_{k}], \qquad k=0,\ldots,p-1,
\end{equation}
and
\begin{equation}\label{limitP2}
\lim_{n \to \infty}\frac{P_{n + p(p+1) ,k}(z)}{P_{n,k}(z)}=\prod_{\rho =0}^{p(p+1)-1}\widetilde{F}_{k}^{(\rho)}(z),\qquad z\in\mathbb{C}\setminus[a_{k},b_{k}], \qquad k=0,\ldots,p-1,
\end{equation}
where $\widetilde{F}_k^{(\rho)}$ is $F_k^{(\rho)}$ divided by its leading coefficient (in the Laurent series expansion at infinity) and $(F_k^{(\rho)})_{k=0}^{p-1}$ is a collection of analytic functions that satisfies, for the given value of $\rho$, the properties stated in Lemma \ref{boundary}.
\end{theorem}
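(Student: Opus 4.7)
The plan is to fix the residue class $\rho\in\{0,\ldots,p(p+1)-1\}$ and the level $k\in\{0,\ldots,p-1\}$, and analyze the ratios $R_\lambda(z):=P_{\lambda p(p+1)+\rho+1,k}(z)/P_{\lambda p(p+1)+\rho,k}(z)$ as $\lambda\to\infty$. By Lemma~\ref{lem:valuesdiffZnk}, the degree difference $Z(n+1,k)-Z(n,k)\in\{-1,0,1\}$ is $p(p+1)$-periodic in $n$, so along the progression $n=\lambda p(p+1)+\rho$ it is a fixed constant $\delta\in\{-1,0,1\}$. By Theorem~\ref{theo:interl} the zeros of $P_{n,k}$ and $P_{n+1,k}$ interlace on $(a_k,b_k)$, so after multiplying or dividing $R_\lambda$ by a single compensating linear factor to account for $\delta$ one obtains a locally bounded family on $\mathbb{C}\setminus[a_k,b_k]$. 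Montel's theorem then supplies a convergent subsequence $R_{\lambda_j}\to F$; the candidate limit $\widetilde{F}_k^{(\rho)}$ will be $F$ normalized by its leading Laurent coefficient at $\infty$.

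\textbf{Reduction to varying-measure orthogonality and boundary relations.} The heart of the argument is to identify every such subsequential limit with the solution of the boundary value problem of Lemma~\ref{boundary}. The tool is the normalized orthogonality \eqref{orthog:lpnk}--\eqref{orthonorm:lpnk}: each $p_{n,k}$ is orthonormal with respect to the positive varying measure $|\nu_{n,k}|$, whose weight carries the factors $|h_{n,k}|$ and $|P_{n,k-1}P_{n,k+1}|^{-1}$, cf.\ \eqref{def:measnunk}--\eqref{positivenunk}. Under the hypothesis that each $\sigma_k^{*}$ has a positive Radon--Nikodym derivative a.e.\ on $[a_k,b_k]$, and assuming inductively that the ratios at the adjacent levels $k-1$ and $k+1$ converge along the same subsequence, I would invoke the Rakhmanov-type theorem on ratio asymptotics of orthogonal polynomials with respect to varying measures (in the form used in \cite{AptLopRocha,LopLopratio,Lop}). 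This yields convergence of $R_{\lambda_j}$ to an analytic function whose boundary values on $[a_k,b_k]$ are controlled by the limit of $h_{n,k}$ through the integral representation \eqref{intrep:hnk}, and hence ultimately by the ratio-asymptotic limits at level $k-1$. Coupling these relations across all $k$ and all $\rho$ produces the closed system recorded in Lemma~\ref{boundary}.

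\textbf{Uniqueness and conclusion.} Lemma~\ref{boundary} asserts that this coupled boundary value problem has a unique solution within the admissible class of analytic functions, with the prescribed normalization, signs, and decay at $\infty$. Consequently every subsequential limit of $R_\lambda$ coincides with $\widetilde{F}_k^{(\rho)}$, upgrading subsequential convergence to convergence of the full sequence and establishing \eqref{limitP}. The product identity \eqref{limitP2} is then immediate from the telescoping
\[
\frac{P_{n+p(p+1),k}(z)}{P_{n,k}(z)}=\prod_{j=0}^{p(p+1)-1}\frac{P_{n+j+1,k}(z)}{P_{n+j,k}(z)},
\]
since each factor on the right converges to $\widetilde{F}_k^{(\rho)}$ for the residue $\rho\equiv n+j\pmod{p(p+1)}$.

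\textbf{Principal obstacle.} The delicate ingredient is the second step. The measures $|\nu_{n,k}|$ are genuinely varying: their weights depend on the polynomials $P_{n,k\pm 1}$ at the adjacent levels and on the functions $h_{n,k}$, which themselves encode the previous-level ratios through \eqref{intrep:hnk}. I therefore expect to need a simultaneous bootstrap over all $k\in\{0,\ldots,p-1\}$ and all residues $\rho\in\{0,\ldots,p(p+1)-1\}$, carefully tracking the periodic signs $\varepsilon_{n,k}$, the degree-shift constants $\delta$, and the Blaschke-type contributions coming from zeros of $P_{n,k\pm 1}$ that lie in the denominators of the weights. Organizing this bootstrap and then verifying the uniqueness claim in Lemma~\ref{boundary} will be the main technical burden of the proof.
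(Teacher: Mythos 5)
Your proposal follows essentially the same route as the paper: normal families via interlacing and Montel, identification of the subsequential limits as Szeg\H{o}-type functions (possibly times $\phi_k^{\pm 1}$) using the ratio and relative asymptotics of \cite{BarCalLop} for the varying measures $|\nu_{n,k}|$, the coupled boundary value system of Lemma~\ref{boundary}, uniqueness (which the paper isolates in Lemma~\ref{unique} via the associated homogeneous harmonic system and \cite{AptLopRocha}), and the telescoping product for \eqref{limitP2}. Two small points of divergence from the paper's execution: the limits of $h_{n,k}$ in \eqref{converghnk} are not driven by the level-$(k-1)$ ratio limits but by the weak-$*$ convergence \eqref{convergtoequil} of $p_{n,k-1}^{2}\,d|\nu_{n,k-1}|$ to the arcsine measure, which holds for the full sequence and thus decouples that ingredient from the subsequence $\Lambda$; and since the coupling of the levels is circular rather than inductive, the subsequence must be extracted so that \eqref{eq:limratios} holds simultaneously for all $k=0,\ldots,p-1$, exactly as the paper does.
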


We describe next the ratio asymptotic behavior of the multiple orthogonal polynomials $(Q_{n})_{n=0}^{\infty}$ and the asymptotic behavior of the associated recurrence coefficients $(a_{n})_{n=p}^{\infty}$.

\begin{corollary}\label{cor:ratioQnan}
Assume that for each $k=0,\ldots,p-1$, the measure $\sigma_{k}^{*}$ has positive Radon-Nikodym derivative with respect to Lebesgue measure a.e. on $[a_{k},b_{k}]$. Then the sequence $(Q_{n})$ of multi-orthogonal polynomials introduced in Definition~\ref{def:MOP} and the sequence $(a_{n})$ of recurrence coefficients in \eqref{threetermrec} satisfy the following:

Let $0\leq \rho\leq p(p+1)-1$ be fixed. If $\rho\not\equiv p \mod (p+1)$, then
\begin{equation}\label{eq:ratioQn:1}
\lim_{\lambda\rightarrow\infty}\frac{Q_{\lambda p(p+1)+\rho+1}(z)}{Q_{\lambda p(p+1)+\rho}(z)}=z\,\widetilde{F}_{0}^{(\rho)}(z^{p+1}),\qquad z\in\mathbb{C}\setminus\Gamma_{0},
\end{equation}
and if $\rho\equiv p\mod (p+1)$, then
\begin{equation}\label{eq:ratioQn:2}
\lim_{\lambda\rightarrow\infty}\frac{Q_{\lambda p(p+1)+\rho+1}(z)}{Q_{\lambda p(p+1)+\rho}(z)}=\frac{\widetilde{F}_{0}^{(\rho)}(z^{p+1})}{z^{p}},\qquad z\in\mathbb{C}\setminus(\Gamma_{0}\cup\{0\}).
\end{equation}
We have
\begin{equation}\label{eq:limreccoeff}
\lim_{\lambda\rightarrow\infty}a_{\lambda p(p+1)+\rho}=a^{(\rho)},
\end{equation}
where the limiting values $a^{(\rho)}$ appear in the Laurent expansion at infinity of $\widetilde{F}_{0}^{(\rho)}$ as follows:
\begin{equation}\label{eq:LaurentFexp}
\widetilde{F}_{0}^{(\rho)}(z)=\begin{cases}
1-a^{(\rho)} z^{-1}+O\left(z^{-2}\right), & \ \mbox{if} \ \rho\not\equiv p \mod (p+1),\\
z-a^{(\rho)}+O\left(z^{-1}\right), & \ \mbox{if} \ \rho\equiv p \mod (p+1).
\end{cases}
\end{equation}
\end{corollary}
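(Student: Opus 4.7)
The plan is to reduce everything to the ratio asymptotics of Theorem~\ref{ratioP} with $k=0$, via the algebraic decomposition $Q_n(z) = z^\ell P_{n,0}(z^{p+1})$ from part 2) of Proposition~\ref{prop:Qnsummary}, where $n = d(p+1)+\ell$ with $0\le\ell\le p$. Writing $n = \lambda p(p+1)+\rho$, one has $\ell = \rho \bmod(p+1)$ since $p(p+1)\equiv 0\bmod(p+1)$.

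For \eqref{eq:ratioQn:1}--\eqref{eq:ratioQn:2}, I would distinguish the two subcases determined by the residue $\rho \bmod(p+1)$. If $\rho\not\equiv p\bmod(p+1)$, then $\ell<p$ forces $\ell_{n+1}=\ell+1$ and $d_{n+1}=d$, so that $Q_{n+1}(z)/Q_n(z) = z\cdot P_{n+1,0}(z^{p+1})/P_{n,0}(z^{p+1})$; Theorem~\ref{ratioP} then yields \eqref{eq:ratioQn:1}, the excluded region $\Gamma_0$ being exactly the preimage of $[a_0,b_0]$ under $z\mapsto z^{p+1}$. If $\rho\equiv p\bmod(p+1)$, then $\ell=p$, $\ell_{n+1}=0$, $d_{n+1}=d+1$, whence $Q_{n+1}(z)/Q_n(z)=z^{-p}P_{n+1,0}(z^{p+1})/P_{n,0}(z^{p+1})$, giving \eqref{eq:ratioQn:2} with the extra puncture at $z=0$.

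For the recurrence coefficients and the Laurent expansion \eqref{eq:LaurentFexp}, I would transport \eqref{threetermrec} to the variable $w = z^{p+1}$. Substituting the decompositions of $Q_n$, $Q_{n+1}$, $Q_{n-p}$ and simplifying produces a polynomial identity in $w$: in case $\rho\not\equiv p$, it reads $P_{n,0}(w) = P_{n+1,0}(w) + a_n P_{n-p,0}(w)$ with $\deg P_{n,0} = \deg P_{n+1,0} = d$ and $\deg P_{n-p,0} = d-1$; in case $\rho\equiv p$, it reads $w\,P_{n,0}(w) = P_{n+1,0}(w) + a_n P_{n-p,0}(w)$ with $\deg P_{n+1,0} = d+1$ and $\deg P_{n-p,0} = d$. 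Denoting by $s_{n,1}$ the coefficient of $w^{\deg P_{n,0}-1}$ in $P_{n,0}$, comparison of subleading coefficients gives $a_n = s_{n,1}-s_{n+1,1}$ in both cases. Consequently the Laurent expansion at infinity of the monic quotient $P_{n+1,0}(w)/P_{n,0}(w)$ begins $1 - a_n/w + O(w^{-2})$ when $\rho\not\equiv p$ and $w - a_n + O(w^{-1})$ when $\rho\equiv p$. Since Theorem~\ref{ratioP} gives uniform convergence on compacts of $\mathbb{C}\setminus[a_0,b_0]$, the Laurent coefficients at infinity pass to the limit (via Cauchy integrals over a large circle $|w|=R$); this simultaneously forces the existence of $a^{(\rho)}:=\lim_{\lambda}a_n$, establishing \eqref{eq:limreccoeff}, and identifies the Laurent expansion of $\widetilde{F}_0^{(\rho)}$ as in \eqref{eq:LaurentFexp}.

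The argument is essentially a change of variable $w=z^{p+1}$ together with bookkeeping of Laurent coefficients; I do not foresee any substantive obstacle beyond tracking the two cases $\rho\not\equiv p$ and $\rho\equiv p\bmod(p+1)$ and the corresponding shifts in the degrees of $P_{n,0}$, $P_{n+1,0}$, $P_{n-p,0}$.
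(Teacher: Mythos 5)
Your proposal is correct, and the first half coincides with the paper's argument: both substitute $z^{p+1}$ into \eqref{limitP} for $k=0$ and use the decomposition $Q_{n}(z)=z^{\ell}P_{n,0}(z^{p+1})$ to get \eqref{eq:ratioQn:1}--\eqref{eq:ratioQn:2}. For the recurrence coefficients you take a mildly but genuinely different route. The paper starts from the same polynomial identities $P_{n,0}=P_{n+1,0}+a_{n}P_{n-p,0}$ and $wP_{n,0}=P_{n+1,0}+a_{n}P_{n-p,0}$ (these are \eqref{threetermrecsecondkindpequena1}--\eqref{threetermrecsecondkindpequena2} with $k=0$, since $\psi_{n,0}=P_{n,0}$), but then divides by $P_{n-p,0}$, writes $P_{n,0}/P_{n-p,0}$ as a telescoping product of $p$ consecutive ratios, and lets $\lambda\to\infty$ to obtain the identity $a^{(\rho)}=(1-\widetilde{F}_{0}^{(\rho)}(z))\prod_{i=\rho-p}^{\rho-1}\widetilde{F}_{0}^{(i)}(z)$ (with cyclic notation), which is finally evaluated by expanding each factor at infinity. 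You instead observe that $-a_{n}$ is \emph{exactly} the subleading Laurent coefficient of the single ratio $P_{n+1,0}/P_{n,0}$ (coefficient of $w^{-1}$ when $\rho\not\equiv p$, constant term when $\rho\equiv p$) and pass to the limit via Cauchy integrals over a circle $|w|=R$, which is a legitimate compact subset of $\mathbb{C}\setminus[a_{0},b_{0}]$ on which all the ratios admit Laurent expansions. Your version needs \eqref{limitP} only for the single residue class $\rho$ and is slightly more economical; the paper's version yields as a by-product the product formula linking $a^{(\rho)}$ to all the functions $\widetilde{F}_{0}^{(i)}$, $i=\rho-p,\ldots,\rho$, which is of independent interest. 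Your degree bookkeeping ($\deg P_{n-p,0}=d-1$ when $\ell\le p-1$, $\deg P_{n+1,0}=d+1$ when $\ell=p$) is correct, so both arguments go through.
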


In the next result we describe the ratio asymptotic behavior of the functions of the second kind $(\psi_{n,k})$, $(\Psi_{n,k})$ and the normalized polynomials $(p_{n,k})$.

\begin{corollary}\label{ratioP3}
Assume that for each $k=0,\ldots,p-1$, the measure $\sigma_{k}^{*}$ has positive Radon-Nikodym derivative with respect to Lebesgue measure a.e. on $[a_{k},b_{k}]$. Then, for each fixed $0\leq \rho \leq p(p+1)-1$,
\begin{equation}\label{limitP3}
\lim_{\lambda \to \infty}\frac{\kappa_{\lambda p(p+1)+\rho+1,k}}{\kappa_{\lambda p(p+1)+\rho,k}}= \kappa_k^{(\rho)}, \qquad k=0,\ldots,p-1,
\end{equation}
\begin{equation}\label{limitP3+}
\lim_{\lambda \to \infty}\frac{K_{\lambda p(p+1)+\rho+1,k}}{K_{\lambda p(p+1)+\rho,k}}= \kappa_0^{(\rho)}\cdots\kappa_k^{(\rho)}, \qquad k=0,\ldots,p-1,
\end{equation}
and
\begin{equation}\label{limitP4}
\lim_{\lambda \to \infty}\frac{p_{\lambda p(p+1)+\rho+1,k}(z)}{p_{\lambda p(p+1)+\rho,k}(z)}=\kappa_k^{(\rho)}\widetilde{F}_{k}^{(\rho)}(z),\qquad z\in\mathbb{C}\setminus[a_{k},b_{k}], \qquad k=0,\ldots,p-1,
\end{equation}
where
\begin{equation}
\label{kappa}
\kappa_k^{(\rho)} = \frac{c_k^{(\rho)}}{(c_{k-1}^{(\rho)}c_{k+1}^{(\rho)})^{1/2}},
\end{equation}
and $(c_k^{(\rho)}), k=0,\ldots,p-1$ is the unique solution of the system of equations \eqref{eq:cs} $(c_{-1}^{(\rho)} = c_{p}^{(\rho)} =1 )$.

Regarding the functions of the second kind $(\psi_{n,k})$ and $(\Psi_{n,k})$, for $k=1,\ldots,p$ we have
\begin{equation}
\label{second}
\lim_{\lambda \to \infty } \frac{\psi_{\lambda p(p+1) + \rho +1,k}(z)}{\psi_{\lambda p(p+1) + \rho,k}(z)} = \frac{\varepsilon_{k}^{(\rho)}\,h_{k}^{(\rho)}(z)}{(\kappa_0^{(\rho)}\cdots\kappa_{k-1}^{(\rho)})^2} \frac{\widetilde{F}_k^{(\rho)}(z)}{\widetilde{F}_{k-1}^{(\rho)}(z)}, \qquad z\in\mathbb{C}\setminus([a_{k-1},b_{k-1}]\cup[a_{k},b_{k}]\cup\{0\}),
\end{equation}
where $\varepsilon_{k}^{(\rho)}$ is either $1$ or $-1$ depending on $k$ and $\rho$, and
\begin{equation}\label{def:hkrho}
h_{k}^{(\rho)}(z)=\begin{cases}
z, & \mbox{if}\ \rho\equiv p \mod (p+1),\\
z^{-1}, & \mbox{if}\ \rho\equiv k-1 \mod (p+1),\\
1, & \mbox{otherwise}.
\end{cases}
\end{equation}
In \eqref{second} we use the convention $\widetilde{F}_{p}^{(\rho)}\equiv 1$. Under the same assumptions on $\rho$ and $k$ we have
\begin{equation}\label{asymp:Psink}
\lim_{\lambda\rightarrow\infty}\frac{\Psi_{\lambda p(p+1)+\rho+1,k}(z)}{\Psi_{\lambda p(p+1)+\rho,k}(z)}=\frac{\varepsilon_{k}^{(\rho)}\,g_{k}^{(\rho)}(z)}{(\kappa_0^{(\rho)}\cdots\kappa_{k-1}^{(\rho)})^2} \frac{\widetilde{F}_k^{(\rho)}(z^{p+1})}{\widetilde{F}_{k-1}^{(\rho)}(z^{p+1})}, \qquad z\in\mathbb{C}\setminus(\Gamma_{k-1}\cup\Gamma_{k}\cup\{0\}),
\end{equation}
where
\[
g_{k}^{(\rho)}(z)=\begin{cases}
z^{-p}, & \mbox{if} \ \rho\equiv k-1 \mod (p+1),\\
z, & \mbox{otherwise}.
\end{cases}
\]
\end{corollary}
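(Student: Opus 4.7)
The plan is to deduce each of the five limits from Theorem~\ref{ratioP} by a chain of algebraic manipulations, with the key new ingredient being a careful asymptotic analysis of the auxiliary functions $h_{n,k}$ via their integral representation \eqref{intrep:hnk}.

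First I would establish \eqref{limitP3}. The idea is to combine \eqref{intrep:hnk}, which represents $h_{n,k}$ as an (optionally $z$-weighted) Cauchy transform of the probability measure $p_{n,k-1}^{2}\,d|\nu_{n,k-1}|$, with the identity $\psi_{n,k}=h_{n,k}P_{n,k}/(K_{n,k-1}^{2}P_{n,k-1})$ obtained directly from \eqref{eq:def:Hnk}--\eqref{eq:def:hnk}. Under the assumption that $\sigma_{k}^{*}$ has positive density a.e., Rakhmanov-type theory for orthonormal polynomials with respect to varying measures---the same tool used to derive Theorem~\ref{ratioP} through Lemma~\ref{boundary}---gives weak-$*$ convergence of $p_{n,k-1}^{2}\,d|\nu_{n,k-1}|$ along the subsequence $n=\lambda p(p+1)+\rho$ to an explicit limit measure, so that $h_{n,k}$ converges locally uniformly on $\mathbb{C}\setminus[a_{k-1},b_{k-1}]$ to a function $h_{k}^{(\rho)}$. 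Matching the leading behavior at infinity of $h_{n,k}$ computed in two different ways---from \eqref{intrep:hnk} on the one hand, and from $h_{n,k}=K_{n,k-1}^{2}P_{n,k-1}\psi_{n,k}/P_{n,k}$ together with Theorem~\ref{ratioP} on the other---produces a coupled system of $p$ equations for constants $c_{k}^{(\rho)}$ that encode the limits of the normalization constants $K_{n,k}$. This is precisely the system \eqref{eq:cs}, whose unique solution yields \eqref{limitP3} via \eqref{kappa}.

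The limits \eqref{limitP3+} and \eqref{limitP4} then follow directly: the first from the telescoping identity $K_{n,k}=\prod_{j=0}^{k}\kappa_{n,j}$ (using $K_{n,-1}=1$), and the second by writing $p_{n,k}=\kappa_{n,k}P_{n,k}$ and combining \eqref{limitP3} with Theorem~\ref{ratioP}. For \eqref{second} I take the ratio
$$\frac{\psi_{n+1,k}(z)}{\psi_{n,k}(z)}=\frac{h_{n+1,k}(z)}{h_{n,k}(z)}\cdot\frac{P_{n+1,k}(z)/P_{n,k}(z)}{P_{n+1,k-1}(z)/P_{n,k-1}(z)}\cdot\frac{K_{n,k-1}^{2}}{K_{n+1,k-1}^{2}},$$
whose three factors converge respectively by the first step, by Theorem~\ref{ratioP}, and by \eqref{limitP3+}. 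The prefactor $h_{k}^{(\rho)}(z)$ in \eqref{def:hkrho} tracks the $z^{\delta}$ factor in \eqref{intrep:hnk} (which equals $z$ when $\ell<k$ and $1$ when $k\leq\ell$, with $\ell=n\bmod(p+1)$): passing from $n$ to $n+1$ toggles this prefactor only when $\rho\equiv k-1\bmod(p+1)$ (loss of a factor $z$) or $\rho\equiv p\bmod(p+1)$ (gain of a factor $z$ from the reset $\ell=p\to\ell=0$), leaving it invariant otherwise. The sign $\varepsilon_{k}^{(\rho)}$ absorbs the corresponding ratio of signs in \eqref{intrep:hnk}. Finally, \eqref{asymp:Psink} follows from \eqref{second} together with the rearrangement $\Psi_{n,k}(z)=z^{\ell-k}\psi_{n,k}(z^{p+1})$ of \eqref{modifiedPsink}: substituting $z\mapsto z^{p+1}$ in \eqref{second} and multiplying by $z^{\ell_{n+1}-\ell_{n}}$ produces the claimed limit, and the exponent $\ell_{n+1}-\ell_{n}$ equals $1$ except when $\rho\equiv p\bmod(p+1)$, where it equals $-p$, matching the piecewise definition of $g_{k}^{(\rho)}$.

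The main obstacle is the first step: one must rigorously justify that $p_{n,k-1}^{2}\,d|\nu_{n,k-1}|$ converges (which, in practice, forces an inductive argument on $k$, with the Rakhmanov-type theory applied at level $k-1$) and then identify the correct limit of \eqref{intrep:hnk}. The combinatorial case analysis dictated by $\rho\bmod(p+1)$, combined with the distinction $\ell<k$ versus $k\leq\ell$ in \eqref{intrep:hnk}, makes the derivation of \eqref{eq:cs} intricate; the principal bookkeeping burden is keeping track of which of the two cases of \eqref{intrep:hnk} applies at levels $k-1$ and $k$ simultaneously for two consecutive values of $n$.
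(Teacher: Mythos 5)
There is a genuine gap in your first step, the derivation of \eqref{limitP3}. Your proposed mechanism is to match the leading behavior of $h_{n,k}$ at infinity computed from \eqref{intrep:hnk} against the one computed from $h_{n,k}=K_{n,k-1}^{2}P_{n,k-1}\psi_{n,k}/P_{n,k}$. The first computation gives $h_{n,k}(z)\sim \varepsilon_{n,k-1}z^{\delta-1}$ (since $\int p_{n,k-1}^{2}\,d|\nu_{n,k-1}|=1$), so the matching yields only the identity that the leading Laurent coefficient of $\psi_{n,k}$ at infinity equals $\varepsilon_{n,k-1}/K_{n,k-1}^{2}$. To extract from this the limit of $K_{n+1,k-1}/K_{n,k-1}$ you would need to know the limit of the ratio of leading coefficients of $\psi_{n+1,k}$ and $\psi_{n,k}$ from an independent source; but that limit is exactly the constant in \eqref{second}, which itself contains $(\kappa_{0}^{(\rho)}\cdots\kappa_{k-1}^{(\rho)})^{2}$ --- the quantity you are trying to determine. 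Theorem~\ref{ratioP} and the weak-$*$ convergence \eqref{convergtoequil} do not break this circularity: they control $P_{n+1,k}/P_{n,k}$ and $h_{n,k}$, but leave the single relation above with two unknowns. You also assert that the matching ``produces precisely the system \eqref{eq:cs}''; this conflates two different objects, since \eqref{eq:cs} is the linear system relating the normalizing constants $c_{k}^{(\rho)}$ of the boundary-value problem to the Szeg\H{o}-function values $w_{k}^{(\rho)}$, and it is established in Lemma~\ref{boundary}, not from asymptotics of $h_{n,k}$ at infinity. The missing ingredient is the one the paper actually uses: apply the \emph{orthonormal} versions of the ratio and relative asymptotics theorems for varying measures (Theorems 1 and 2 of \cite{BarCalLop}), exactly as in Subsection~\ref{preliminary}, to obtain $\lim_{\lambda}p_{n+1,k}/p_{n,k}=(w_{k}^{(\rho)})^{1/2}\widetilde{F}_{k}^{(\rho)}$; dividing by \eqref{limitP} gives $\lim_{\lambda}\kappa_{n+1,k}/\kappa_{n,k}=(w_{k}^{(\rho)})^{1/2}$, and \eqref{eq:cs} then identifies $(w_{k}^{(\rho)})^{1/2}$ with $\kappa_{k}^{(\rho)}=c_{k}^{(\rho)}/(c_{k-1}^{(\rho)}c_{k+1}^{(\rho)})^{1/2}$.

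The remainder of your argument coincides with the paper's: the telescoping $K_{n,k}=\kappa_{n,0}\cdots\kappa_{n,k}$ for \eqref{limitP3+}, the identity $p_{n,k}=\kappa_{n,k}P_{n,k}$ for \eqref{limitP4}, the four-factor decomposition of $\psi_{n+1,k}/\psi_{n,k}$ for \eqref{second}, and the substitution $z\mapsto z^{p+1}$ with the exponent $\ell(n+1)-\ell(n)$ for \eqref{asymp:Psink}; your bookkeeping for $h_{k}^{(\rho)}$ and $g_{k}^{(\rho)}$ is correct. One further point you gloss over: since \eqref{converghnk} controls $\varepsilon_{n,k-1}h_{n,k}$ rather than $h_{n,k}$ itself, the convergence of $h_{n+1,k}/h_{n,k}$ requires that the sign ratio $\varepsilon_{n+1,k-1}/\varepsilon_{n,k-1}$ be constant along $n=\lambda p(p+1)+\rho$; this is not automatic and is the content of Lemma~\ref{lem:ratioepsilon} combined with Lemma~\ref{lem:valuesdiffZnk}.
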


For an exact formula of $\varepsilon_{k}^{(\rho)}$, see Remark~\ref{rmksignepsilon}.

\section{Analysis of $Z(n+1,k)-Z(n,k)$ and some consequences}\label{sec:diffZnk}

The study of ratio asymptotics requires as a preliminary step the analysis of $Z(n+1,k)-Z(n,k)$, which is the difference in the degrees of the consecutive polynomials $P_{n+1,k}$ and $P_{n,k}$. Since we are using in this section both indices $n$ and $n+1$, we will indicate below the dependence with respect to $n$ of the quantities that appear in Lemma~\ref{lem:valuesdiffZnk}. Thus we will write for instance $\ell(n)$ for the integer satisfying
\[
n \equiv  \ell(n) \mod (p+1),\qquad 0\leq \ell(n)\leq p,
\]
and so on.

Throughout this section we decompose $n$ in the form
\begin{equation}\label{decompmodpp}
n=\lambda p(p+1)+\rho,
\end{equation}
where $\lambda=\lambda(n)$, $\rho=\rho(n)$ are integers and $0\leq\rho\leq p(p+1)-1$.
We also decompose $\rho$ modulo $p+1$ and write
\begin{equation}\label{eq:decomp:rho}
\rho(n)=\eta(p+1)+\ell,\qquad \eta=\eta(n)\in\{0,\ldots,p-1\},\qquad \ell=\ell(n)\in\{0,\ldots,p\}.
\end{equation}
Given two integers $n$ and $n'$ with $n\leq n'$, in this section and in the rest of the paper we will use for convenience the notation $[n:n']$ to indicate the set $\{s\in\mathbb{Z}: n\leq s\leq n'\}$.

Before describing the difference $Z(n+1,k)-Z(n,k)$, we make some relevant observations. The lower bound in \eqref{countingcond} is simply
\[
\left\lceil\frac{\ell(n)-j}{p+1}\right\rceil=\begin{cases}
0, & \mathrm{if}\ \ell(n)\leq j,\\
1, & \mathrm{if}\ j<\ell(n).
\end{cases}
\]
Concerning the upper bound in \eqref{countingcond}, using \eqref{decompmodpp} and \eqref{eq:decomp:rho} we get
\begin{equation}\label{upbd1}
\left\lfloor\frac{n+p\,\ell(n)-1-j(p+1)}{p(p+1)}\right\rfloor=\left\lfloor \lambda+\frac{\eta(n)+\ell(n)-j}{p}-\frac{1}{p(p+1)}\right\rfloor,
\end{equation}
where $\lambda$ corresponds to $n$, but we prefer not to write $\lambda(n)$ in \eqref{upbd1} and below. If $\ell(n)<p$, then $\ell(n+1)=\ell(n)+1$,
and in this case we have that the upper bound in \eqref{countingcond} that corresponds to $n+1$ is
\begin{equation}\label{upbd2}
\left\lfloor\frac{n+1+p\,\ell(n+1)-1-j(p+1)}{p(p+1)}\right\rfloor=\left\lfloor \lambda+\frac{\eta(n)+\ell(n)-j+1}{p}-\frac{1}{p(p+1)}\right\rfloor.
\end{equation}
However, if $\ell(n)=p$, then $\ell(n+1)=0$, and then
\begin{equation}\label{upbd3}
\left\lfloor\frac{n+1+p\,\ell(n+1)-1-j(p+1)}{p(p+1)}\right\rfloor=\left\lfloor \lambda+\frac{\eta(n)-j+1}{p}-\frac{1}{p(p+1)}\right\rfloor.
\end{equation}

Assume now that $\ell(n)<p$. Then we easily see that the quantities in \eqref{upbd1} and \eqref{upbd2}
are equal for all values of $j$ (recall $j\in[k:p-1]$) except those satisfying $\eta(n)+\ell(n)-j\in p\mathbb{Z}$.
In virtue of the restrictions
on the quantities $\eta$, $\ell$ and $j$,
the only possible exceptional cases are $j=\eta(n)+\ell(n)$ and $j=\eta(n)+\ell(n)-p$. In any of these
two exceptional cases, we have that the quantity in \eqref{upbd2} is one unit greater than the quantity in \eqref{upbd1}.
Also, as $j$ runs from $k$ to $p-1$, clearly $j$ can take at most one of these exceptional values. These are the key observations to keep in mind in order to prove the following result.

\begin{lemma}\label{lem:valuesdiffZnk}
Let $k\in[0:p-1]$ be fixed. Then, as a function of $n$, the expression $Z(n+1,k)-Z(n,k)$ is periodic with period $p(p+1)$,
and $Z(n+1,k)-Z(n,k)\in\{-1,0,1\}$ for all $n$. Moreover, using the notation \eqref{eq:decomp:rho} we have the following:

If $\ell(n)\in[0:p-1]$, then
\begin{equation}\label{eq:values:diffZnk:1}
Z(n+1,k)-Z(n,k)=\begin{cases}
0, & \mathrm{if}\ \eta(n)+\ell(n)\notin[k:p-1],\quad \ell(n)\in[0:k-1],\\[0.5em]
1, & \mathrm{if}\ \eta(n)+\ell(n)\in[k:p-1],\quad \ell(n)\in [0:k-1],\\[0.5em]
0,  & \mathrm{if}\ \eta(n)+\ell(n)\notin [p:p+k-1],\quad \ell(n)\in[k:p-1],\\[0.5em]
-1, &  \mathrm{if}\ \eta(n)+\ell(n)\in [p:p+k-1],\quad \ell(n)\in[k:p-1].
\end{cases}
\end{equation}
If $\ell(n)=p$, then
\begin{equation}\label{eq:values:diffZnk:2}
Z(n+1,k)-Z(n,k)=\begin{cases}
0, & \mathrm{if}\ \eta(n)\in[0:k-1],\\[0.5em]
1, & \mathrm{if}\ \eta(n)\in[k:p-1].
\end{cases}
\end{equation}
\end{lemma}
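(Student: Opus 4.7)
I would write $Z(n+1,k) - Z(n,k) = \sum_{j=k}^{p-1} \delta_j(n)$ with $\delta_j(n) := M_j(n+1) - M_j(n)$, and analyze the single-step change in the lower endpoint $L_j(n) := \lceil (\ell(n)-j)/(p+1) \rceil$ and the upper endpoint $U_j(n)$ of the range in \eqref{countingcond}. Since $L_j(n)$, together with the fractional part of $U_j(n) - \lambda(n)$, depends only on $\rho(n) = n \mod p(p+1)$, and the passage from $n$ to $n+1$ shifts $\rho$ by $1$ modulo $p(p+1)$ in a manner governed solely by $\rho$, each $\delta_j$ is automatically $p(p+1)$-periodic in $n$, giving the first assertion. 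The values $\delta_j(n) \in \{-1,0,1\}$ and the explicit formulas will come from a direct bookkeeping, split into two cases matching the lemma.

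\textbf{Case $\ell(n) \in [0 : p-1]$.} Here $\ell(n+1) = \ell(n)+1$ and $\lambda(n+1) = \lambda(n)$. A one-line check with the ceiling function shows that $L_j(n+1) - L_j(n) = 1$ when $j = \ell(n)$ and $0$ otherwise. For the upper endpoint, the observation made by the authors between \eqref{upbd2} and Lemma~\ref{lem:valuesdiffZnk} yields $U_j(n+1) - U_j(n) = 1$ at the unique $j^*(n) \in [0 : p-1]$ satisfying $\eta(n)+\ell(n)-j^*(n) \in p\mathbb{Z}$, namely
\[
j^*(n) = \begin{cases} \eta(n)+\ell(n), & \eta(n)+\ell(n) \le p-1, \\ \eta(n)+\ell(n)-p, & \eta(n)+\ell(n) \ge p, \end{cases}
\]
and $U_j$ is unchanged for every other $j$. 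Hence $\delta_j(n) = \mathbf{1}[j = j^*(n)] - \mathbf{1}[j = \ell(n)]$, and summing over $j \in [k : p-1]$ gives
\[
Z(n+1,k) - Z(n,k) = \mathbf{1}[j^*(n) \ge k] - \mathbf{1}[\ell(n) \ge k] \in \{-1,0,1\}.
\]
A short sub-case analysis on whether $\ell(n) < k$ or $\ell(n) \ge k$, and on whether $\eta+\ell \le p-1$ or $\eta+\ell \ge p$, using that $\ell \in [0:k-1]$ and $\eta \le p-1$ force $\eta + \ell - p \le k-2 < k$, reproduces the four entries of \eqref{eq:values:diffZnk:1}.

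\textbf{Case $\ell(n) = p$.} Now $\ell(n+1) = 0$, so $L_j(n+1) - L_j(n) = -1$ uniformly in $j$. Using \eqref{upbd1} for $n$ and \eqref{upbd3} for $n+1$, with $\epsilon := 1/(p(p+1))$,
\[
U_j(n+1) - U_j(n) = \bigl\lfloor (\eta(n)-j+1)/p - \epsilon \bigr\rfloor - 1 - \bigl\lfloor (\eta(n)-j)/p - \epsilon \bigr\rfloor.
\]
Evaluating these floors on the three ranges $j < \eta(n)$, $j = \eta(n)$, $j > \eta(n)$ (each of which keeps the arguments inside a single unit interval) shows the difference equals $0$ when $j = \eta(n)$ and $-1$ otherwise. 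Combining with the uniform lower-endpoint drop, $\delta_j(n) = \mathbf{1}[j = \eta(n)]$, which upon summation over $j \in [k : p-1]$ produces \eqref{eq:values:diffZnk:2}.

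\textbf{Main obstacle.} The essential care is needed in the second case: the clean "shift by $1/p$" observation that drives the first case does not apply because the integer part of the upper bound can jump by a full unit at the transition, and both endpoints move simultaneously. One has to carry out the floor arithmetic explicitly and see the cancellation that isolates the single contributing index $j = \eta(n)$; once this is done, the rest of the lemma is a routine sub-case bookkeeping.
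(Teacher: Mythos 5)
Your proof is correct. It rests on exactly the two observations the paper itself records just before the lemma---the ceiling lower bound $L_j(n)=\lceil(\ell(n)-j)/(p+1)\rceil$ increments precisely at $j=\ell(n)$, and the floor upper bound increments precisely at the unique $j^*(n)\in[0:p-1]$ with $\eta(n)+\ell(n)-j^*(n)\in p\mathbb{Z}$---but you organize the computation differently. The paper enumerates five main cases and in each one writes out the full intervals $[\varsigma(n,j):\kappa(n,j)]$ and $[\varsigma(n+1,j):\kappa(n+1,j)]$ for every $j\in[k:p-1]$ before comparing totals (displays \eqref{relbounds:5}--\eqref{relbounds:10}); you instead telescope per index, obtaining the closed forms $\delta_j(n)=\mathbf{1}[j=j^*(n)]-\mathbf{1}[j=\ell(n)]$ when $\ell(n)<p$ and $\delta_j(n)=\mathbf{1}[j=\eta(n)]$ when $\ell(n)=p$, so that summation gives $Z(n+1,k)-Z(n,k)=\mathbf{1}[j^*(n)\geq k]-\mathbf{1}[\ell(n)\geq k]$ in one line, from which the four entries of \eqref{eq:values:diffZnk:1}, the bound $|Z(n+1,k)-Z(n,k)|\leq 1$, and the $p(p+1)$-periodicity all drop out at once. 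This is a genuinely more economical bookkeeping of the same underlying facts, and your floor arithmetic in the $\ell(n)=p$ case is right. The one step you use silently is that $M_j$ equals (upper bound) $-$ (lower bound) $+1$ with no truncation at zero, i.e.\ that the counting range in \eqref{countingcond} never has negative length; this does hold (one checks $U_j\geq -1$ always, and $L_j=1$ forces $\eta(n)+\ell(n)-j\geq 1$, hence $U_j\geq\lambda\geq 0$), and it deserves a sentence, although the paper's own interval lists rely on the same convention for $\lambda=0$.
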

\begin{proof}
For convenience let us introduce the notation
\[
\varsigma(n,j):=\left\lceil\frac{\ell(n)-j}{p+1}\right\rceil,\qquad \kappa(n,j):=\left\lfloor\frac{n+p\,\ell(n)-1-j(p+1)}{p(p+1)}\right\rfloor.
\]

Assume that $\ell(n)<k$ and $\eta(n)+\ell(n)\in[k:p-1]$. Then we have that
\begin{equation}\label{relbounds:1}
\varsigma(n,j)=\varsigma(n+1,j)=0,\qquad \mbox{for all}\quad j\in[k:p-1],
\end{equation}
and
\begin{equation}\label{relbounds:2}
\kappa(n,j)=\kappa(n+1,j),
\qquad \mbox{for all}\quad j\in[k:p-1]\setminus\{\eta(n)+\ell(n)\}.
\end{equation}
As it was already observed, if $j=\eta(n)+\ell(n)$ then $\kappa(n+1,j)=\kappa(n,j)+1$. Therefore, by definition of $Z(n,k)$ and $Z(n+1,k)$, we obtain that if $\ell(n)<k$ and $\eta(n)+\ell(n)\in[k:p-1]$ then $Z(n+1,k)=Z(n,k)+1$.

Assume now that $\ell(n)<k$ and $\eta(n)+\ell(n)\notin[k:p-1]$. We still have the relation \eqref{relbounds:1}, and the first assumption shows that $\eta(n)+\ell(n)-p<\ell(n)<k$, hence $\eta(n)+\ell(n)-p\notin[k:p-1]$. Therefore, as $j$ runs from $k$ to $p-1$, $j$ does not take the exceptional values $\eta(n)+\ell(n)$ and $\eta(n)+\ell(n)-p$, which implies that $\kappa(n,j)=\kappa(n+1,j)$ for all $j\in[k:p-1]$. It follows that in this case $Z(n,k)=Z(n+1,k)$.

Now we assume that $k\leq \ell(n)\leq p-1$ and $\eta(n)+\ell(n)\in[k:p-1]$. Then
\begin{equation}\label{relbounds:3}
\varsigma(n,j)=\begin{cases}
1, & \ j\in[k:\ell(n)-1],\\[0.5em]
0, & \ j\in[\ell(n):p-1],
\end{cases}
\end{equation}
and
\begin{equation}\label{relbounds:4}
\varsigma(n+1,j)=\left\lceil\frac{\ell(n)+1-j}{p+1}\right\rceil=\begin{cases}
1, & \ j\in[k:\ell(n)],\\[0.5em]
0, & \ j\in[\ell(n)+1:p-1].
\end{cases}
\end{equation}
On the other hand, since $\eta(n)+\ell(n)\in[k:p-1]$, we see that \eqref{relbounds:2} holds and if $j=\eta(n)+\ell(n)$,
then $\kappa(n+1,j)=\kappa(n,j)+1$.

Assume additionally for the moment that $\eta(n)=0$. Then according to \eqref{relbounds:3}--\eqref{relbounds:4} and the above observations, we obtain that the intervals \eqref{countingcond} that correspond to $n$ and $n+1$ take the following form:
\begin{equation}\label{relbounds:5}
\begin{gathered}
\left[\varsigma(n,j):\kappa(n,j)\right]=\left[\varsigma(n+1,j):\kappa(n+1,j)\right]=[1:\lambda],\qquad j\in[k:\ell(n)-1],\\
[\varsigma(n,j),\kappa(n,j)]=[0:\lambda-1],\qquad [\varsigma(n+1,j),\kappa(n+1,j)]=[1:\lambda],\qquad \mbox{for}\ j=\ell(n),\\
\left[\varsigma(n,j):\kappa(n,j)\right]=\left[\varsigma(n+1,j):\kappa(n+1,j)\right]=[0:\lambda-1],\qquad j\in[\ell(n)+1:p-1].
\end{gathered}
\end{equation}
Therefore in this case we have $Z(n,k)=Z(n+1,k)$. Assume now that $\eta(n)>0$, while still assuming that $k\leq \ell(n)\leq p-1$ and $\eta(n)+\ell(n)\in[k:p-1]$. In this case the intervals \eqref{countingcond} for $n$ and $n+1$ have the form
\begin{equation}\label{relbounds:6}
\begin{gathered}
\left[\varsigma(n,j):\kappa(n,j)\right]=\left[\varsigma(n+1,j):\kappa(n+1,j)\right]=[1:\lambda],\qquad j\in[k:\ell(n)-1],\\
[\varsigma(n,j),\kappa(n,j)]=[0:\lambda],\qquad [\varsigma(n+1,j),\kappa(n+1,j)]=[1:\lambda],\qquad \mbox{for}\ j=\ell(n),\\
\left[\varsigma(n,j):\kappa(n,j)\right]=\left[\varsigma(n+1,j):\kappa(n+1,j)\right]=[0:\lambda],\qquad j\in[\ell(n)+1:\eta(n)+\ell(n)-1],\\
[\varsigma(n,j),\kappa(n,j)]=[0:\lambda-1],\qquad [\varsigma(n+1,j),\kappa(n+1,j)]=[0:\lambda],\qquad \mbox{for}\ j=\eta(n)+\ell(n),\\
\left[\varsigma(n,j):\kappa(n,j)\right]=\left[\varsigma(n+1,j):\kappa(n+1,j)\right]=[0:\lambda-1],\qquad j\in[\eta(n)+\ell(n)+1:p-1].
\end{gathered}
\end{equation}
It follows that in this case $Z(n,k)=Z(n+1,k)$. The two cases that we just analyzed show that if $k\leq \ell(n)\leq p-1$ and $\eta(n)+\ell(n)\in[k:p-1]$, then $Z(n,k)=Z(n+1,k)$.

Suppose now that $k\leq \ell(n)\leq p-1$ and $\eta(n)+\ell(n)\geq p+k$. Then we know that $\kappa(n,j)=\kappa(n+1,j)$ for all $j$ except for $j=\eta(n)+\ell(n)-p$, and \eqref{relbounds:3}--\eqref{relbounds:4} are still valid. Notice that $k\leq \eta(n)+\ell(n)-p\leq \ell(n)-1\leq p-2$. Therefore in this case we have the following:
\begin{equation}\label{relbounds:7}
\begin{gathered}
\left[\varsigma(n,j):\kappa(n,j)\right]=\left[\varsigma(n+1,j):\kappa(n+1,j)\right]=[1:\lambda+1],\qquad j\in[k:\eta(n)+\ell(n)-p-1],\\
[\varsigma(n,j),\kappa(n,j)]=[1:\lambda],\qquad [\varsigma(n+1,j),\kappa(n+1,j)]=[1:\lambda+1],\qquad \mbox{for}\ j=\eta(n)+\ell(n)-p,\\
\left[\varsigma(n,j):\kappa(n,j)\right]=\left[\varsigma(n+1,j):\kappa(n+1,j)\right]=[1:\lambda],\qquad j\in[\eta(n)+\ell(n)-p+1:\ell(n)-1],\\
[\varsigma(n,j),\kappa(n,j)]=[0:\lambda],\qquad [\varsigma(n+1,j),\kappa(n+1,j)]=[1:\lambda],\qquad \mbox{for}\ j=\ell(n),\\
\left[\varsigma(n,j):\kappa(n,j)\right]=\left[\varsigma(n+1,j):\kappa(n+1,j)\right]=[0:\lambda],\qquad j\in[\ell(n)+1:p-1].
\end{gathered}
\end{equation}
This shows that if $k\leq \ell(n)\leq p-1$ and $\eta(n)+\ell(n)\geq p+k$, then $Z(n,k)=Z(n+1,k)$.

We assume now that $k\leq \ell(n)\leq p-1$ and $\eta(n)+\ell(n)\in[p:p+k-1]$. Then $\eta(n)+\ell(n)\notin[k:p-1]$ and $\eta(n)+\ell(n)-p\notin[k:p-1]$, hence $\kappa(n,j)=\kappa(n+1,j)$ for all $j\in[k:p-1]$. The relations \eqref{relbounds:3}--\eqref{relbounds:4} hold. Therefore we have
\begin{equation}\label{relbounds:8}
\begin{gathered}
\left[\varsigma(n,j):\kappa(n,j)\right]=\left[\varsigma(n+1,j):\kappa(n+1,j)\right]=[1:\lambda],\qquad j\in[k:\ell(n)-1],\\
[\varsigma(n,j),\kappa(n,j)]=[0:\lambda],\qquad [\varsigma(n+1,j),\kappa(n+1,j)]=[1:\lambda],\qquad \mbox{for}\ j=\ell(n),\\
\left[\varsigma(n,j):\kappa(n,j)\right]=\left[\varsigma(n+1,j):\kappa(n+1,j)\right]=[0:\lambda],\qquad j\in[\ell(n)+1:p-1].
\end{gathered}
\end{equation}
This shows that in this case $Z(n+1,k)=Z(n,k)-1$.

Finally let's assume that $\ell(n)=p$. Since $\ell(n+1)=0$, in this case we have $\varsigma(n,j)=1$ and $\varsigma(n+1,j)=0$ for all $j\in[k:p-1]$.
Writing $n=mp(p+1)+\eta(n)(p+1)+p$, we get
\[
\kappa(n,j)=\left\lfloor m+\frac{\eta(n)+p-j}{p}-\frac{1}{p(p+1)}\right\rfloor=\begin{cases}
\lambda+1, & j\in[0:\eta(n)-1],\\
\lambda, & j\in[\eta(n):p-1],
\end{cases}
\]
and
\[
\kappa(n+1,j)=\left\lfloor m+\frac{\eta(n)+1-j}{p}-\frac{1}{p(p+1)}\right\rfloor=\begin{cases}
\lambda, & j\in[0:\eta(n)],\\
\lambda-1, & j\in[\eta(n)+1:p-1].
\end{cases}
\]
If we assume that $\eta(n)\in[0:k-1]$, then
\begin{equation}\label{relbounds:9}
[\varsigma(n,j):\kappa(n,j)]=[1:\lambda],\quad
[\varsigma(n+1,j):\kappa(n+1,j)]=[0:\lambda-1],\qquad\mbox{for all}\ j\in[k:p-1],
\end{equation}
hence in this case $Z(n,k)=Z(n+1,k)$. If $\eta(n)\in[k:p-1]$, then
\begin{equation}\label{relbounds:10}
\begin{gathered}
\left[\varsigma(n,j):\kappa(n,j)\right]=[1:\lambda+1],\quad \left[\varsigma(n+1,j):\kappa(n+1,j)\right]=[0:\lambda],\qquad j\in[k:\eta(n)-1],\\
[\varsigma(n,j),\kappa(n,j)]=[1:\lambda],\qquad [\varsigma(n+1,j),\kappa(n+1,j)]=[0:\lambda],\qquad \mbox{for}\ j=\eta(n),\\
\left[\varsigma(n,j):\kappa(n,j)\right]=[1:\lambda],\quad\left[\varsigma(n+1,j):\kappa(n+1,j)\right]=[0:\lambda-1],\qquad j\in[\eta(n)+1:p-1],
\end{gathered}
\end{equation}
which shows that in this case $Z(n+1,k)=Z(n,k)+1$.

In virtue of \eqref{decompmodpp}--\eqref{eq:decomp:rho} we have $\ell(n)=\ell(n+p(p+1))$ and $\eta(n)=\eta(n+p(p+1))$, therefore the periodicity of $Z(n+1,k)-Z(n,k)$ follows from \eqref{eq:values:diffZnk:1}--\eqref{eq:values:diffZnk:2}.
\end{proof}

We illustrate in Table~\ref{table:values:Zn} the values of $Z(n+1,k)-Z(n,k)$ in the case $k=3$. It is worth noticing, as it follows from \eqref{eq:values:diffZnk:1}--\eqref{eq:values:diffZnk:2}, that if $k\geq 1$ and $\eta=\eta(n)=0$, then $Z(n+1,k)-Z(n,k)=0$; that is, the entries in the first row of the table of values of $Z(n+1,k)-Z(n,k)$, $k\geq 1$, are all zero.

\begin{table}[h]
\begin{center}
\begin{tikzpicture}[scale=0.8]
\draw [line width=2.1] (-6,5) -- (7.6,5);
\draw [line width=2.1] (-6,5) -- (-6,-5.5);
\draw [thin] (-7,4) -- (7.6,4);
\draw [thin] (-7,3) -- (7.6,3);
\draw [thin] (-7,2) -- (7.6,2);
\draw [thin] (-7,1) -- (7.6,1);
\draw [thin] (-7,0) -- (7.6,0);
\draw [thin] (-7,-1.5) -- (7.6,-1.5);
\draw [thin] (-7,-2.5) -- (7.6,-2.5);
\draw [thin] (-7,-3.5) -- (7.6,-3.5);
\draw [thin] (-7,-4.5) -- (7.6,-4.5);
\draw [thin] (-7,-5.5) -- (7.6,-5.5);
%horizontal lines above
%vertical lines below
\draw [thin] (-4.8,6) -- (-4.8,-5.5);
\draw [thin] (-3.6,6) -- (-3.6,-5.5);
\draw [thin] (-2.4,6) -- (-2.4,-5.5);
\draw [thin] (-1.2, 6) -- (-1.2,-5.5);
\draw [thin] (0,6) -- (0,-5.5);
\draw [thin] (1.6,6) -- (1.6,-5.5);
\draw [thin] (2.8,6) -- (2.8,-5.5);
\draw [thin] (4,6) -- (4,-5.5);
\draw [thin] (5.2,6) -- (5.2,-5.5);
\draw [thin] (6.4,6) -- (6.4,-5.5);
\draw [thin] (7.6,6) -- (7.6, -5.5);
%vertical lines above
%Below, the first five numbers in the first row
\draw (-5.4, 4.5) node [scale=1]{$0$};
\draw (-4.2,4.5) node [scale=1]{$0$};
\draw (-3, 4.5) node [scale=1]{$0$};
\draw (-1.8, 4.5) node [scale=1]{$0$};
\draw (-0.6,4.5) node [scale=1]{$0$};
%Above, the first five numbers in the first row
%Below, the first five numbers in second row
\draw (-5.4,3.5) node [scale=1]{$0$};
\draw (-4.2, 3.5) node [scale=1]{$0$};
\draw (-3,3.5) node [scale=1]{$1$};
\draw (-1.8, 3.5) node [scale=1]{$0$};
\draw (-0.6,3.5) node [scale=1]{$0$};
%Above, the first five numbers in second row
%Below, the first five numbers in third row
\draw (-5.4,2.5) node [scale=1]{$0$};
\draw (-4.2, 2.5) node [scale=1]{$1$};
\draw (-3,2.5) node [scale=1]{$1$};
\draw (-1.8, 2.5) node [scale=1]{$0$};
\draw (-0.6,2.5) node [scale=1]{$0$};
%Above, the first five numbers in third row
%Below, the first five numbers in row 4
\draw (-5.4,1.5) node [scale=1]{$1$};
\draw (-4.2, 1.5) node [scale=1]{$1$};
\draw (-3,1.5) node [scale=1]{$1$};
\draw (-1.8, 1.5) node [scale=1]{$0$};
\draw (-0.6,1.5) node [scale=1]{$0$};
%Above, the first five numbers in row 4
%Below, the first five numbers in row 5
\draw (-5.4,0.5) node [scale=1]{$1$};
\draw (-4.2, 0.5) node [scale=1]{$1$};
\draw (-3,0.5) node [scale=1]{$1$};
\draw (-1.8, 0.5) node [scale=1]{$0$};
\draw (-0.6,0.5) node [scale=1]{$0$};
%Above, the first five numbers in row 5
%Below, the first five numbers in row 6
\draw (-5.4,-2) node [scale=1]{$1$};
\draw (-4.2, -2) node [scale=1]{$1$};
\draw (-3, -2) node [scale=1]{$1$};
\draw (-1.8, -2) node [scale=1]{$0$};
\draw (-0.6, -2) node [scale=1]{$-1$};
%Above, the first five numbers in row 6
%Below, the first five numbers in row 7
\draw (-5.4,-3) node [scale=1]{$1$};
\draw (-4.2, -3) node [scale=1]{$1$};
\draw (-3, -3) node [scale=1]{$1$};
\draw (-1.8, -3) node [scale=1]{$-1$};
\draw (-0.6, -3) node [scale=1]{$-1$};
%Above, the first five numbers in row 7
%Below, the first five numbers in row 8
\draw (-5.4,-4) node [scale=1]{$1$};
\draw (-4.2, -4) node [scale=1]{$1$};
\draw (-3, -4) node [scale=1]{$0$};
\draw (-1.8, -4) node [scale=1]{$-1$};
\draw (-0.6, -4) node [scale=1]{$-1$};
%Above, the first five numbers in row 8
%Below, the first five numbers in row 9
\draw (-5.4,-5) node [scale=1]{$1$};
\draw (-4.2, -5) node [scale=1]{$0$};
\draw (-3, -5) node [scale=1]{$0$};
\draw (-1.8, -5) node [scale=1]{$-1$};
\draw (-0.6, -5) node [scale=1]{$0$};
%Above, the first five numbers in row 9
%Below, the last five numbers in row 1
\draw (2.2, 4.5) node [scale=1]{$0$};
\draw (3.4,4.5) node [scale=1]{$0$};
\draw (4.6, 4.5) node [scale=1]{$0$};
\draw (5.8, 4.5) node [scale=1]{$0$};
\draw (7,4.5) node [scale=1]{$0$};
%Above, the last five numbers in row 1
%Below, the last five numbers in row 2
\draw (2.2, 3.5) node [scale=1]{$0$};
\draw (3.4,3.5) node [scale=1]{$0$};
\draw (4.6, 3.5) node [scale=1]{$0$};
\draw (5.8, 3.5) node [scale=1]{$-1$};
\draw (7,3.5) node [scale=1]{$0$};
%Above, the last five numbers in row 2
%Below, the last five numbers in row 3
\draw (2.2, 2.5) node [scale=1]{$0$};
\draw (3.4,2.5) node [scale=1]{$0$};
\draw (4.6, 2.5) node [scale=1]{$-1$};
\draw (5.8, 2.5) node [scale=1]{$-1$};
\draw (7,2.5) node [scale=1]{$0$};
%Above, the last five numbers in row 3
%Below, the last five numbers in row 4
\draw (2.2, 1.5) node [scale=1]{$0$};
\draw (3.4,1.5) node [scale=1]{$-1$};
\draw (4.6, 1.5) node [scale=1]{$-1$};
\draw (5.8, 1.5) node [scale=1]{$-1$};
\draw (7,1.5) node [scale=1]{$1$};
%Above, the last five numbers in row 4
%Below, the last five numbers in row 5
\draw (2.2, 0.5) node [scale=1]{$-1$};
\draw (3.4,0.5) node [scale=1]{$-1$};
\draw (4.6, 0.5) node [scale=1]{$-1$};
\draw (5.8, 0.5) node [scale=1]{$0$};
\draw (7,0.5) node [scale=1]{$1$};
%Above, the last five numbers in row 5
%Below, the last five numbers in row 6
\draw (2.2, -2) node [scale=1]{$0$};
\draw (3.4,-2) node [scale=1]{$0$};
\draw (4.6, -2) node [scale=1]{$0$};
\draw (5.8, -2) node [scale=1]{$0$};
\draw (7, -2) node [scale=1]{$1$};
%Above, the last five numbers in row 6
%Below, the last five numbers in row 7
\draw (2.2, -3) node [scale=1]{$0$};
\draw (3.4,-3) node [scale=1]{$0$};
\draw (4.6, -3) node [scale=1]{$0$};
\draw (5.8, -3) node [scale=1]{$0$};
\draw (7, -3) node [scale=1]{$1$};
%Above, the last five numbers in row 7
%Below, the last five numbers in row 8
\draw (2.2, -4) node [scale=1]{$0$};
\draw (3.4,-4) node [scale=1]{$0$};
\draw (4.6, -4) node [scale=1]{$0$};
\draw (5.8, -4) node [scale=1]{$0$};
\draw (7, -4) node [scale=1]{$1$};
%Above, the last five numbers in row 8
%Below, the last five numbers in row 9
\draw (2.2, -5) node [scale=1]{$0$};
\draw (3.4,-5) node [scale=1]{$0$};
\draw (4.6, -5) node [scale=1]{$0$};
\draw (5.8, -5) node [scale=1]{$0$};
\draw (7, -5) node [scale=1]{$1$};
%Above, the last five numbers in row 9
%Below, we write the indices for the parameter l
\draw (-5.4, 5.5) node [scale=1]{$0$};
\draw (-4.2,5.5) node [scale=1]{$1$};
\draw (-3, 5.5) node [scale=1]{$2$};
\draw (-1.8, 5.5) node [scale=1]{$3$};
\draw (-0.6,5.5) node [scale=1]{$4$};
\draw (2.2, 5.5) node [scale=1]{$p-4$};
\draw (3.4,5.5) node [scale=1]{$p-3$};
\draw (4.6, 5.5) node [scale=1]{$p-2$};
\draw (5.8, 5.5) node [scale=1]{$p-1$};
\draw (7, 5.5) node [scale=1]{$p$};
%Above, we write the indices for the parameter l
%Below, we write the indices for the parameter m
\draw (-6.6, 4.5) node [scale=1]{$0$};
\draw (-6.6, 3.5) node [scale=1]{$1$};
\draw (-6.6, 2.5) node [scale=1]{$2$};
\draw (-6.6, 1.5) node [scale=1]{$3$};
\draw (-6.6, 0.5) node [scale=1]{$4$};
\draw (-6.6, -2) node [scale=1]{$p-4$};
\draw (-6.6, -3) node [scale=1]{$p-3$};
\draw (-6.6, -4) node [scale=1]{$p-2$};
\draw (-6.6, -5) node [scale=1]{$p-1$};
%Above, we write the indices for the parameter m
\draw [line width=2] (-6,5) -- (-6,6);
\draw [line width=2] (-6,5) -- (-7,5);
%Below, the vertical dots
\fill (-6.6,-1) circle (1.4pt);
\fill (-6.6,-0.8) circle (1.4pt);
\fill (-6.6, -0.6) circle (1.4pt);
\fill (-5.4,-1) circle (1.4pt);
\fill (-5.4,-0.8) circle (1.4pt);
\fill (-5.4, -0.6) circle (1.4pt);
\fill (-4.2,-1) circle (1.4pt);
\fill (-4.2,-0.8) circle (1.4pt);
\fill (-4.2, -0.6) circle (1.4pt);
\fill (-3,-1) circle (1.4pt);
\fill (-3,-0.8) circle (1.4pt);
\fill (-3, -0.6) circle (1.4pt);
\fill (-1.8,-1) circle (1.4pt);
\fill (-1.8,-0.8) circle (1.4pt);
\fill (-1.8, -0.6) circle (1.4pt);
\fill (-0.6,-1) circle (1.4pt);
\fill (-0.6,-0.8) circle (1.4pt);
\fill (-0.6, -0.6) circle (1.4pt);
\fill (2.2,-1) circle (1.4pt);
\fill (2.2,-0.8) circle (1.4pt);
\fill (2.2, -0.6) circle (1.4pt);
\fill (3.4,-1) circle (1.4pt);
\fill (3.4,-0.8) circle (1.4pt);
\fill (3.4, -0.6) circle (1.4pt);
\fill (4.6,-1) circle (1.4pt);
\fill (4.6,-0.8) circle (1.4pt);
\fill (4.6, -0.6) circle (1.4pt);
\fill (5.8,-1) circle (1.4pt);
\fill (5.8,-0.8) circle (1.4pt);
\fill (5.8, -0.6) circle (1.4pt);
\fill (7,-1) circle (1.4pt);
\fill (7,-0.8) circle (1.4pt);
\fill (7, -0.6) circle (1.4pt);
%Above, the vertical dots
%Below, the horizontal dots
\fill (0.6, 5.5) circle (1.4pt);
\fill (0.8, 5.5) circle (1.4pt);
\fill (1, 5.5) circle (1.4pt);
\fill (0.6, 4.5) circle (1.4pt);
\fill (0.8, 4.5) circle (1.4pt);
\fill (1, 4.5) circle (1.4pt);
\fill (0.6, 3.5) circle (1.4pt);
\fill (0.8, 3.5) circle (1.4pt);
\fill (1, 3.5) circle (1.4pt);
\fill (0.6, 2.5) circle (1.4pt);
\fill (0.8, 2.5) circle (1.4pt);
\fill (1, 2.5) circle (1.4pt);
\fill (0.6, 1.5) circle (1.4pt);
\fill (0.8, 1.5) circle (1.4pt);
\fill (1, 1.5) circle (1.4pt);
\fill (0.6, 0.5) circle (1.4pt);
\fill (0.8, 0.5) circle (1.4pt);
\fill (1, 0.5) circle (1.4pt);
\fill (0.6, -2) circle (1.4pt);
\fill (0.8, -2) circle (1.4pt);
\fill (1, -2) circle (1.4pt);
\fill (0.6, -3) circle (1.4pt);
\fill (0.8, -3) circle (1.4pt);
\fill (1, -3) circle (1.4pt);
\fill (0.6, -4) circle (1.4pt);
\fill (0.8, -4) circle (1.4pt);
\fill (1, -4) circle (1.4pt);
\fill (0.6, -5) circle (1.4pt);
\fill (0.8, -5) circle (1.4pt);
\fill (1, -5) circle (1.4pt);
%Above, the horizontal dots
\draw (0.8,6.3) node [scale=1]{$\ell$};
\draw (-7.5,-0.8) node [scale=1]{$\eta$};
\end{tikzpicture}
\end{center}
\caption{The values of $Z(n+1,3)-Z(n,3)$ that correspond to the values of $\rho(n)=\eta(p+1)+\ell$ with $0\leq \eta\leq p-1$ and $0\leq \ell\leq p$.}
\label{table:values:Zn}
\end{table}

We now state a useful lemma that will be applied in the following section.

\begin{lemma}
Let $k\in[1:p-1]$. Then we have the following:
\begin{equation}\label{eq:rel:diffZnk}
Z(n+1,k)-Z(n,k)\leq Z(n+1,k-1)-Z(n,k-1)+\begin{cases}
0, & \mathrm{if} \ \ell(n)\in[0:k-2],\\[0.5em]
1, & \mathrm{if} \ \ell(n)=k-1,\\[0.5em]
0, & \mathrm{if} \ \ell(n)\in[k:p].
\end{cases}
\end{equation}
\end{lemma}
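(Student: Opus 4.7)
The plan is to reduce the inequality \eqref{eq:rel:diffZnk} to a single statement about $M_{k-1}(n)$, and then to control the latter by tracking how the endpoints of the interval in \eqref{countingcond} evolve when $n\to n+1$. Using the identity $Z(n,k-1)-Z(n,k)=M_{k-1}(n)$, which is immediate from \eqref{def:Znk}, \eqref{eq:rel:diffZnk} rewrites equivalently as
\[
M_{k-1}(n+1)-M_{k-1}(n) \geq \begin{cases} -1, & \ell(n)=k-1,\\ 0, & \text{otherwise.}\end{cases}
\]
Since $M_{k-1}(n)$ is the number of integers in the interval $[\varsigma(n,k-1):\kappa(n,k-1)]$ (in the notation of the proof of Lemma~\ref{lem:valuesdiffZnk}), it suffices to control the changes $\Delta\varsigma:=\varsigma(n+1,k-1)-\varsigma(n,k-1)$ and $\Delta\kappa:=\kappa(n+1,k-1)-\kappa(n,k-1)$.

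The first step I would carry out is a direct inspection of $\varsigma(n,k-1)=\lceil(\ell(n)-(k-1))/(p+1)\rceil$, which shows that $\Delta\varsigma=1$ precisely when $\ell(n)=k-1$ (so $\varsigma$ jumps from $0$ to $1$), $\Delta\varsigma=-1$ precisely when $\ell(n)=p$ (so $\varsigma$ drops from $1$ to $0$ as $\ell(n+1)=0$), and $\Delta\varsigma=0$ otherwise. The second step is to compute $\Delta\kappa$: the observations \eqref{upbd1}--\eqref{upbd3} made before Lemma~\ref{lem:valuesdiffZnk} immediately give $\Delta\kappa\in\{0,1\}$ whenever $\ell(n)<p$ (with $\Delta\kappa=1$ only at the exceptional residues $\eta(n)+\ell(n)\in\{k-1,k-1+p\}$). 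For $\ell(n)=p$, a short calculation with \eqref{upbd3}, splitting on $\eta(n)\in[0:k-2]$, $\eta(n)=k-1$ and $\eta(n)\in[k:p-1]$, yields $\Delta\kappa\in\{-1,0\}$, with $\Delta\kappa=0$ precisely when $\eta(n)=k-1$. In particular, $\Delta\kappa\geq -1$ always, and $\Delta\kappa\geq 0$ whenever $\ell(n)<p$.

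Assembling these via $M_{k-1}(n+1)-M_{k-1}(n)=\Delta\kappa-\Delta\varsigma$ (valid whenever both intervals are non-empty) handles every case: for $\ell(n)\in[0:k-2]\cup[k:p-1]$ the difference is $\geq 0$; for $\ell(n)=k-1$ it is $\geq -1$; and for $\ell(n)=p$ the two unit drops cancel to give $\geq 0$. The main obstacle will be precisely the case $\ell(n)=p$, where both endpoints move simultaneously and one must verify carefully that the drop in $\kappa$ never exceeds one unit, so that it is absorbed by the drop in $\varsigma$. A minor technical nuisance is that the identity $M_{k-1}(n+1)-M_{k-1}(n)=\Delta\kappa-\Delta\varsigma$ is strictly valid only when both intervals are non-empty; however, since all the quantities involved lie in $\{-1,0,1\}$, the boundary cases where one of the intervals is empty are easily dispatched by direct inspection.
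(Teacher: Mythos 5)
Your proposal is correct, and it takes a genuinely different route from the paper. The paper deduces the inequality by running through the full case analysis of Lemma~\ref{lem:valuesdiffZnk}: for each configuration of $\ell(n)$ and $\eta(n)+\ell(n)$ it reads off the explicit values of $Z(n+1,k)-Z(n,k)$ and $Z(n+1,k-1)-Z(n,k-1)$ from \eqref{eq:values:diffZnk:1}--\eqref{eq:values:diffZnk:2} and checks the inequality directly. You instead observe that, by \eqref{def:Znk}, the difference of the two sides is exactly $-(M_{k-1}(n+1)-M_{k-1}(n))$, so the whole lemma collapses to a lower bound on the variation of the single counter $M_{k-1}$, which you then control by tracking the two endpoints $\varsigma(n,k-1)$ and $\kappa(n,k-1)$ of the interval in \eqref{countingcond}. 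I checked your endpoint computations ($\Delta\varsigma=1$ iff $\ell(n)=k-1$, $\Delta\varsigma=-1$ iff $\ell(n)=p$; $\Delta\kappa\in\{0,1\}$ for $\ell(n)<p$ and $\Delta\kappa\in\{-1,0\}$ for $\ell(n)=p$ with $\Delta\kappa=0$ exactly when $\eta(n)=k-1$) against \eqref{upbd1}--\eqref{upbd3}, and they are all accurate; the cancellation $\Delta\kappa-\Delta\varsigma\geq 0$ in the case $\ell(n)=p$ comes out exactly as you predict. The ``minor nuisance'' of possibly empty intervals is even easier than you suggest: since $M_{k-1}(n)=\max\left(\kappa(n,k-1)-\varsigma(n,k-1)+1,\,0\right)$ and $x\mapsto\max(x,0)$ is nondecreasing and $1$-Lipschitz, the bound $\Delta\kappa-\Delta\varsigma\geq -c$ with $c\in\{0,1\}$ transfers verbatim to $M_{k-1}(n+1)-M_{k-1}(n)\geq -c$, with no separate inspection needed. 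What your approach buys is brevity and independence from the detailed tabulation in Lemma~\ref{lem:valuesdiffZnk} (you only reuse the elementary observations preceding it); what the paper's approach buys is that the case table is needed elsewhere anyway (e.g.\ in Lemma~\ref{lemma:interlac:2}), so deriving the inequality from it costs little extra.
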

\begin{proof}
To prove the first relation in \eqref{eq:rel:diffZnk}, assume that $\ell(n)\in[0:k-2]$. If $\eta(n)+\ell(n)\in[k:p-1]$, then in virtue of \eqref{eq:values:diffZnk:1} we have $Z(n+1,k)-Z(n,k)=1$. Now, since $\ell(n)\in[0:k-2]$ and $\eta(n)+\ell(n)\in[k:p-1]\subset[k-1:p-1]$, applying the second relation in \eqref{eq:values:diffZnk:1} for $k-1$ we obtain $Z(n+1,k-1)-Z(n,k-1)=1$. Now assume that $\eta(n)+\ell(n)\notin[k:p-1]$. Then $Z(n+1,k)-Z(n,k)=0$ and we have two possibilities, namely $\eta(n)+\ell(n)\notin[k-1:p-1]$ or $\eta(n)+\ell(n)=k-1$. In the former case, we have $Z(n+1,k-1)-Z(n,k-1)=0$, and in the later case $Z(n+1,k-1)-Z(n,k-1)=1$. This justifies the first relation in \eqref{eq:rel:diffZnk}.

Now assume that $\ell(n)=k-1$. We distinguish again the two alternatives $\eta(n)+\ell(n)\in[k:p-1]$ and $\eta(n)+\ell(n)\notin[k:p-1]$. In the first one we have $Z(n+1,k)-Z(n,k)=1$, and because $\ell(n)\in[k-1:p-1]$ and $\eta(n)+\ell(n)\notin[k:p-1]$ we get from \eqref{eq:values:diffZnk:1} that $Z(n+1,k-1)-Z(n,k-1)=0$, so the claim holds. In the second case we have $Z(n+1,k)-Z(n,k)=0$, so the claim holds trivially since $Z(n+1,k-1)-Z(n,k-1)\in\{-1,0,1\}$. This proves the second relation in \eqref{eq:rel:diffZnk}.

Suppose now that $\ell(n)\in[k:p-1]$. If $\eta(n)+\ell(n)\in[p:p+k-1]$ then $Z(n+1,k)-Z(n,k)=-1$ and the inequality is trivially valid. If $\eta(n)+\ell(n)\notin[p:p+k-1]$, then $Z(n+1,k)-Z(n,k)=0$. The assumptions imply that $\ell(n)\in[k-1:p-1]$ and $\eta(n)+\ell(n)\notin[p:p+k-2]$, hence $Z(n+1,k-1)-Z(n,k-1)=0$, and the claim follows.

Finally, if $\ell(n)=p$, inequality follows immediately from \eqref{eq:values:diffZnk:2}.
\end{proof}

In what follows, we shall use the notations $\mathrm{sign}(f,\Delta)$ ($\mathrm{sign}(\nu,\Delta)$) to indicate the sign of the function $f$ (measure $\nu$) on the interval $\Delta$, and $\Delta_{k}$ shall denote the interval $[a_{k},b_{k}]$. Recall that $\varepsilon_{n,k}=\mathrm{sign}(\nu_{n,k},\Delta_{k})$, where $\nu_{n,k}$ is the measure defined in \eqref{def:measnunk}. We continue using the notations \eqref{decompmodpp}--\eqref{eq:decomp:rho}.

\begin{lemma}\label{lem:ratioepsilon}
Let $k\in[0:p-1]$ be fixed. Then, as a function of $n$, the expression $\frac{\varepsilon_{n+1,k}}{\varepsilon_{n,k}}$ is periodic with period $p(p+1)$. More precisely, we have
\begin{equation}\label{eq:ratioepsilon}
\frac{\varepsilon_{n+1,k}}{\varepsilon_{n,k}}=(-1)^{Z(n+1,2\lc k/2 \rc)-Z(n,2\lc k/2 \rc)+\theta(n,k)},
\end{equation}
where
\begin{equation}\label{def:thetank}
\theta(n,k):=\begin{cases}
1, & \mbox{if} \ \ell(n)\in[0:k-1],\\
0, & \mbox{if} \ \ell(n)\in[k+1:p-1],\\
1, & \mbox{if} \ \ell(n)=k,\,\,k\,\,\mbox{odd},\\
0, & \mbox{if} \ \ell(n)=k,\,\,k\,\,\mbox{even},\\
1, & \mbox{if} \ \ell(n)=p.
\end{cases}
\end{equation}
\end{lemma}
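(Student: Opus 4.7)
The plan is to express $\varepsilon_{n,k}$ as a product of the signs of the constituent factors of $\nu_{n,k}$ on $\Delta_k$, derive a recursion in $k$, and then compare at $n$ and $n+1$. Writing
\[
\varepsilon_{n,k}=\mathrm{sign}(h_{n,k},\Delta_k)\,\mathrm{sign}(\sigma_{n,k},\Delta_k)\,\mathrm{sign}(P_{n,k-1}P_{n,k+1},\Delta_k),
\]
I will compute each factor using the fact that $\Delta_k\subset\mathbb{R}_+$ for $k$ even and $\Delta_k\subset\mathbb{R}_-$ for $k$ odd, so consecutive $\Delta_j$ lie on opposite sides of the origin.

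From \eqref{varymeas:sigma}, $\mathrm{sign}(\sigma_{n,k},\Delta_k)$ is $1$ if $\ell(n)\leq k$ and $(-1)^k$ if $\ell(n)>k$. The zeros of $P_{n,k\pm 1}$ all lie on the side of $0$ opposite to $\Delta_k$, so each linear factor $(\tau-x_i)$ has fixed sign, and the product has sign $1$ when $k$ is even and $(-1)^{Z(n,k-1)+Z(n,k+1)}$ when $k$ is odd. For $h_{n,k}$ I use \eqref{intrep:hnk}: for $z\in\Delta_k$ and $\tau\in\Delta_{k-1}$ the kernel $1/(z-\tau)$ has sign $(-1)^k$, so the integral against the positive measure $p_{n,k-1}^2\,d|\nu_{n,k-1}|$ has sign $\varepsilon_{n,k-1}(-1)^k$, and the optional prefactor $z$ (present exactly when $\ell<k$) contributes another $(-1)^k$. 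Thus $\mathrm{sign}(h_{n,k},\Delta_k)=\varepsilon_{n,k-1}$ when $\ell<k$ and $\varepsilon_{n,k-1}(-1)^k$ when $k\leq\ell$. Combining these pieces yields the clean recursion
\[
\frac{\varepsilon_{n,k}}{\varepsilon_{n,k-1}}=\begin{cases}1,& k\text{ even},\\(-1)^{Z(n,k-1)+Z(n,k+1)+\mathbf{1}_{\ell(n)=k}},& k\text{ odd},\end{cases}
\]
with $\varepsilon_{n,0}=1$.

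Telescoping then expresses $\varepsilon_{n,k}$ as a product of signs indexed by the odd $j\in[1{:}k]$. Taking the ratio at $n+1$ and $n$, the interior contributions cancel in pairs modulo $2$, and the sum $\sum_{j\text{ odd}}[\Delta Z(n,j-1)+\Delta Z(n,j+1)]$ collapses to $\Delta Z(n,0)+\Delta Z(n,2\lceil k/2\rceil)$, with $\Delta Z(n,j):=Z(n+1,j)-Z(n,j)$. The remaining task is to verify the congruence
\[
\Delta Z(n,0)+\#\{j\in[1{:}k]\text{ odd}:\ell(n)=j\}+\#\{j\in[1{:}k]\text{ odd}:\ell(n+1)=j\}\equiv\theta(n,k)\pmod 2,
\]
using $\Delta Z(n,0)=\mathbf{1}_{\ell(n)=p}$ (from \eqref{eq:formZn0}) and $\ell(n+1)=\ell(n)+1$ if $\ell(n)<p$, $\ell(n+1)=0$ if $\ell(n)=p$. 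This reduces to a five-way case check on $\ell(n)\in\{0,\ldots,p\}$, whose outcomes match exactly the five cases defining $\theta(n,k)$ in \eqref{def:thetank}. Periodicity modulo $p(p+1)$ is then automatic from Lemma~\ref{lem:valuesdiffZnk} and the $(p+1)$-periodicity of $\ell(n)$.

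The main obstacle I anticipate is step (c), pinning down $\mathrm{sign}(h_{n,k},\Delta_k)$ via \eqref{intrep:hnk}: one must track how the optional $z$-prefactor cancels the sign $(-1)^k$ coming from the Cauchy kernel, so that in both branches the answer has the form $\varepsilon_{n,k-1}$ times a controlled power of $-1$ depending only on the parity of $k$ and on whether $\ell(n)\geq k$. The concluding combinatorial identity is routine, but getting the bookkeeping right between the target index $2\lceil k/2\rceil$ and the odd index set $\{j\leq k:\,j\text{ odd}\}$ is where care is needed, particularly in reconciling the cases $k$ even (where $2\lceil k/2\rceil=k$) and $k$ odd (where $2\lceil k/2\rceil=k+1$).
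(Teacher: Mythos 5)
Your proposal is correct and follows essentially the same route as the paper: the same factorization of $\varepsilon_{n,k}$ into the signs of $\sigma_{n,k}$, $P_{n,k-1}P_{n,k+1}$ and $h_{n,k}$ (the latter via \eqref{intrep:hnk}), the same recursion in $k$ (the paper's \eqref{eq:relepsilons}), the same telescoped closed form (the paper's \eqref{eq:formulaenk}), and the ratio then reduced to a case check on $\ell(n)$ using $Z(n+1,0)-Z(n,0)=\mathbf{1}_{\ell(n)=p}$. The only difference is cosmetic bookkeeping: you package the final verification as a single mod-$2$ congruence with a five-way case split, whereas the paper splits on the parity of $\ell(n)$; both resolve identically.
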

\begin{proof}
From \eqref{relpositint} and the definition of the measure $\sigma_{n,k}$ in \eqref{varymeas:sigma} we deduce that
\[
\mathrm{sign}(\sigma_{n,k},\Delta_{k})=\begin{cases}
1 & \mbox{for}\,\,k\,\,\mbox{even},\\
1 & \mbox{for}\,\,k\,\,\mbox{odd},\,\,\ell(n)\leq k,\\
-1 & \mbox{for}\,\,k\,\,\mbox{odd},\,\, k<\ell(n).
\end{cases}
\]
Since $P_{n,k}$ is a monic polynomial of degree $Z(n,k)$ and its zeros are located in $\Delta_{k}$, we have
\[
\mathrm{sign}(P_{n,k-1} P_{n,k+1},\Delta_{k})=\begin{cases}
1 & \mbox{for}\,\,k\,\,\mbox{even},\\
(-1)^{Z(n,k-1)+Z(n,k+1)} & \mbox{for}\,\,k\,\,\mbox{odd}.
\end{cases}
\]
In view of \eqref{intrep:hnk} we also obtain
\[
\mathrm{sign}(h_{n,k},\Delta_{k})=\begin{cases}
\varepsilon_{n,k-1} & \mbox{for}\,\,k\,\,\mbox{even},\\
\varepsilon_{n,k-1} & \mbox{for}\,\,k\,\,\mbox{odd},\,\,\ell(n)<k,\\
(-1)\varepsilon_{n,k-1} & \mbox{for}\,\,k\,\,\mbox{odd},\,\,k\leq \ell(n).
\end{cases}
\]
From the above sign formulas and \eqref{def:measnunk} we conclude that for each $k=1,\ldots,p-1,$
\begin{equation}\label{eq:relepsilons}
\epsilon_{n,k}=\mathrm{sign}(\nu_{n,k},\Delta_{k})=\begin{cases}
\varepsilon_{n,k-1} & \mbox{for}\,\,k\,\,\mbox{even},\\
\varepsilon_{n,k-1} (-1)^{Z(n,k-1)+Z(n,k+1)} & \mbox{for}\,\,k\,\,\mbox{odd},\,\,\ell(n)\neq k,\\
\varepsilon_{n,k-1} (-1)^{Z(n,k-1)+Z(n,k+1)+1} & \mbox{for}\,\,k\,\,\mbox{odd},\,\,\ell(n)=k.
\end{cases}
\end{equation}
Notice also that $\varepsilon_{n,0}=1$ for all $n$.

A careful iterative application of \eqref{eq:relepsilons} gives the following formula, valid for all $k\in[0:p-1]$:
\begin{equation}\label{eq:formulaenk}
\varepsilon_{n,k}=\begin{cases}
(-1)^{Z(n,0)+Z(n,2\lc k/2 \rc)} & \mbox{for}\,\,\ell(n)\,\,\mbox{even},\\
(-1)^{Z(n,0)+Z(n,2\lc k/2 \rc)} & \mbox{for}\,\,\ell(n)\,\,\mbox{odd},\,\,k<\ell(n),\\
(-1)^{Z(n,0)+Z(n,2\lc k/2 \rc)+1} & \mbox{for}\,\,\ell(n)\,\,\mbox{odd},\,\,\ell(n)\leq k.
\end{cases}
\end{equation}
Recall also that by convention $Z(n,p)=0$.

Suppose that $\ell(n)\in[0:p-1]$ and $\ell(n)$ is even. Then $\ell(n+1)=\ell(n)+1$ is odd and $Z(n+1,0)=Z(n,0)$, therefore from \eqref{eq:formulaenk} we obtain
\begin{equation}\label{eq:epsilon:1}
\frac{\varepsilon_{n+1,k}}{\varepsilon_{n,k}}
=\frac{(-1)^{Z(n+1,0)+Z(n+1,2\lc k/2 \rc)+\theta_{1}(n,k)}}{(-1)^{Z(n,0)+Z(n,2\lc k/2 \rc)}}=(-1)^{Z(n+1,2\lc k/2 \rc)-Z(n,2\lc k/2 \rc)+\theta_{1}(n,k)},
\end{equation}
where
\[
\theta_{1}(n,k):=\begin{cases}
0, & \mbox{if} \ 0\leq k\leq \ell(n),\\
1, & \mbox{if} \ \ell(n)+1\leq k\leq p-1.
\end{cases}
\]
If $\ell(n)\in[0:p-1]$ and $\ell(n)$ is odd, then $\ell(n+1)=\ell(n)+1$ is even and again $Z(n+1,0)=Z(n,0)$. Hence
\begin{equation}\label{eq:epsilon:2}
\frac{\varepsilon_{n+1,k}}{\varepsilon_{n,k}}=\frac{(-1)^{Z(n+1,0)+Z(n+1,2\lc k/2 \rc)}}{(-1)^{Z(n,0)+Z(n,2\lc k/2 \rc)+\theta_{2}(n,k)}}=(-1)^{Z(n+1,2\lc k/2 \rc)-Z(n,2\lc k/2 \rc)+\theta_{2}(n,k)},
\end{equation}
where
\[
\theta_{2}(n,k):=\begin{cases}
0, & \mbox{if} \ 0\leq k\leq \ell(n)-1,\\
1, & \mbox{if} \ \ell(n)\leq k\leq p-1.
\end{cases}
\]
From \eqref{eq:epsilon:1} and \eqref{eq:epsilon:2} we deduce that \eqref{eq:ratioepsilon} holds in the case $\ell(n)\in[0:p-1]$. The justification of \eqref{eq:ratioepsilon} in the case $\ell(n)=p$ is done similarly and it is left to the reader.
\end{proof}

\section{Interlacing property of the zeros of $P_{n,k}$}\label{sec:interlacing}

In this section we prove the interlacing property of the zeros of the polynomials $P_{n,k}$. Recall that by definition, $P_{n,k}$ is the monic polynomial whose zeros are the zeros of $\psi_{n,k}$ in $(a_{k},b_{k})$. These zeros are all simple and there are $Z(n,k)$ of them, cf. Proposition~\ref{prop:lpsinksummary}. The interlacing property will be derived from a series of lemmas that will be proved first. We remark that in the case $k=0$, the interlacing property of the zeros of $P_{n,0}$ is already a consequence of property 5) from Proposition~\ref{prop:Qnsummary}.

In this section we will continue using the notation in \eqref{decompmodpp}--\eqref{eq:decomp:rho}.

\subsection{Auxiliary results}

\begin{lemma}\label{lemma:interlac:1}
Assume that $A, B\in\mathbb{R}$ with $|A|+|B|>0$. For $k\in[0:p-1]$ and $n\geq 0$ integers, let
\begin{equation}\label{def:Gnk}
G_{n,k}(z):=\begin{cases}
A \psi_{n,k}(z)+B \psi_{n+1,k}(z), & \ell(n)\neq p,\\
A z\psi_{n,k}(z)+B \psi_{n+1,k}(z), & \ell(n)=p.
\end{cases}
\end{equation}
Then the following properties hold:
\begin{itemize}
\item[1)] If $\ell(n)\in[0:k-1]$, then $G_{n,k}$ has at most $Z(n,k)+1$ zeros in $\mathbb{C}\setminus([a_{k-1},b_{k-1}]\cup\{0\})$, counting multiplicities.
\item[2)] If $\ell(n)\in[k:p-1]$, then $G_{n,k}$ has at most $Z(n,k)$ zeros in $\mathbb{C}\setminus([a_{k-1},b_{k-1}]\cup\{0\})$, counting multiplicities.
\item[3)] If $\ell(n)=p$, then $G_{n,k}$ has at most $Z(n,k)+1$ zeros in $\mathbb{C}\setminus([a_{k-1},b_{k-1}]\cup\{0\})$, counting multiplicities.
\end{itemize}
\end{lemma}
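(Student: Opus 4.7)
The plan is to bound the zeros of $G_{n,k}$ by contradiction: suppose that $G_{n,k}$ has more zeros in $\mathbb{C}\setminus([a_{k-1},b_{k-1}]\cup\{0\})$ than the lemma permits, build a test polynomial vanishing at those zeros, and force an orthogonality that conflicts with \eqref{orthogredPsink}.

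The base case $k=0$ is immediate. By \eqref{eq:decompQn} and the convention $P_{n,0}=\psi_{n,0}$, both $\psi_{n,0}$ and $\psi_{n+1,0}$ are polynomials of degrees $Z(n,0)$ and $Z(n+1,0)$. When $\ell(n)\in[0:p-1]$ we have $Z(n+1,0)=Z(n,0)$, so $\deg G_{n,0}\leq Z(n,0)$ directly; when $\ell(n)=p$ we have $Z(n+1,0)=Z(n,0)+1$ and the leading prefactor $z$ in \eqref{def:Gnk} gives $\deg G_{n,0}\leq Z(n,0)+1$. Case (1) is vacuous for $k=0$.

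For $k\in[1:p-1]$, I would first express $G_{n,k}$ as a combination of Cauchy integrals on $[a_{k-1},b_{k-1}]$ by applying \eqref{recurrenceforpsipequena} to both $\psi_{n,k}$ and $\psi_{n+1,k}$, keeping careful track of the $z$-prefactor (governed by the relative size of $\ell(n),\ell(n+1)$ and $k$) and of the $\tau$-factor in the measures $d\sigma_{n,k-1}$, $d\sigma_{n+1,k-1}$ (governed by \eqref{varymeas:sigma}). Assume for contradiction that $G_{n,k}$ has $N+1$ zeros $\zeta_{1},\dots,\zeta_{N+1}$ in $\mathbb{C}\setminus([a_{k-1},b_{k-1}]\cup\{0\})$, where $N$ is the bound asserted in the relevant case, and set $T(z)=\prod_{i=1}^{N+1}(z-\zeta_{i})$. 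For every polynomial $r$ of suitably low degree, the quotient $r(z)G_{n,k}(z)/T(z)$ is analytic on $\mathbb{C}\setminus[a_{k-1},b_{k-1}]$ and, using the decay estimates \eqref{decayinfpsink}--\eqref{rel:NnkZnk} for $\psi_{n,k}$ and $\psi_{n+1,k}$, decays at infinity at least like $O(z^{-2})$. Cauchy's theorem on a large contour around $[a_{k-1},b_{k-1}]$ then forces the orthogonality relation
\[
\int_{a_{k-1}}^{b_{k-1}}r(\tau)\,T(\tau)\,\bigl[A\tau^{\alpha}\psi_{n,k-1}(\tau)+B\tau^{\beta}\psi_{n+1,k-1}(\tau)\bigr]\,d\sigma_{k-1}^{*}(\tau)=0
\]
for all such $r$, with $\alpha,\beta\in\{0,1\}$ determined by the case. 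Comparing the number of orthogonality conditions thus imposed on this combination with the maximum that $A\tau^{\alpha}\psi_{n,k-1}+B\tau^{\beta}\psi_{n+1,k-1}$ can genuinely satisfy against $d\sigma_{k-1}^{*}$, obtained by adding the individual orthogonality windows from \eqref{orthogredPsink} and using Lemma \ref{lem:valuesdiffZnk} to relate $Z(n+1,k-1)$ to $Z(n,k-1)$, yields a strict inequality, the desired contradiction.

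The main obstacle is the case-by-case bookkeeping. One has to handle the degenerate transitions in which either the representations of $\psi_{n,k}$ and $\psi_{n+1,k}$ from \eqref{recurrenceforpsipequena} do not have the same $z$-prefactor (subcase $\ell(n)=k-1$, which requires the identity $z/(z-\tau)=1+\tau/(z-\tau)$ to unify them, producing an additive constant) or the measures $d\sigma_{n,k-1}$ and $d\sigma_{n+1,k-1}$ disagree through the $\tau$-factor in \eqref{varymeas:sigma}, as well as the wrap-around case $\ell(n)=p$ where the explicit factor $z$ in \eqref{def:Gnk} shifts the decay count by one. Each such degeneracy either raises the bound by one (cases (1) and (3)) or is absorbed into the decay exponent (case (2)), producing precisely the three bounds claimed.
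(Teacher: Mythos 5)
Your setup --- the polynomial base case $k=0$, the Cauchy-transform representation of $G_{n,k}$ over $[a_{k-1},b_{k-1}]$ via \eqref{recurrenceforpsipequena} and \eqref{varymeas:sigma}, the test polynomial built from the alleged surplus of zeros, and the contour-integral argument powered by the decay estimates \eqref{decayinfpsink}--\eqref{rel:NnkZnk} and the comparison of $Z(n+1,k-1)$ with $Z(n,k-1)$ --- is exactly the paper's strategy. Two small local corrections: Cauchy's integral formula produces the weight $r(\tau)/T(\tau)$, not $r(\tau)\,T(\tau)$, in the resulting orthogonality relation; and $T$ must be taken with real coefficients (pairing conjugate zeros) so that it has constant sign on $[a_{k-1},b_{k-1}]$, which is why the paper works with the real monic polynomial $L_{n,k}$.

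The genuine gap is in how you close the argument. You claim the contradiction comes from comparing the derived orthogonality conditions with ``the maximum that $A\tau^{\alpha}\psi_{n,k-1}+B\tau^{\beta}\psi_{n+1,k-1}$ can genuinely satisfy against $d\sigma_{k-1}^{*}$, obtained by adding the individual orthogonality windows from \eqref{orthogredPsink}.'' No such maximum exists: \eqref{orthogredPsink} lists conditions that $\psi_{n,k-1}$ and $\psi_{n+1,k-1}$ \emph{do} satisfy, which via Lemma~\ref{lemma:orthosystem} yields only a \emph{lower} bound on the number of zeros of the combination (that is the content of Lemma~\ref{lemma:interlac:2}), never an upper bound; moreover the conditions you derive are against the varying measure $d\sigma_{k-1}^{*}/T$, so they are not in tension with \eqref{orthogredPsink} at all. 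Since $\psi_{n,k-1}$ is not a polynomial for $k\geq 2$, there is no degree count to fall back on either. The only available upper bound on the zeros of $A\psi_{n,k-1}+B\psi_{n+1,k-1}$ (or $Az\psi_{n,k-1}+B\psi_{n+1,k-1}$) in $(a_{k-1},b_{k-1})$ is the statement of this very lemma one level down: the paper's proof is an induction on $k$ in which the contour-integral orthogonality forces $G_{n,k-1}$ to have at least $Z(n,k-1)+2$ (resp.\ $Z(n,k-1)+1$) zeros of odd multiplicity in $(a_{k-1},b_{k-1})$, contradicting parts 1)--3) of the inductive hypothesis. You state the base case but never invoke an inductive hypothesis in the step, so as written your argument cannot close.
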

\begin{proof}
The proof is done by induction on $k$. First, since $\psi_{n,0}=P_{n,0}$, in the case $k=0$ we have
\[
G_{n,0}(z)=\begin{cases}
A P_{n,0}(z)+B P_{n+1,0}(z), & \ell(n)\neq p, \\
A z P_{n,0}(z)+B P_{n+1,0}(z), & \ell(n)=p.
\end{cases}
\]
If $\ell(n)\neq p$, then $G_{n,0}$ is a polynomial of degree at most $Z(n,0)=\deg(P_{n,0})=\deg(P_{n+1,0})$, and if $\ell(n)=p$, then $G_{n,0}$ is a polynomial of degree at most $Z(n,0)+1=\deg(P_{n,0})+1=\deg(P_{n+1,0})$. Moreover, $G_{n,0}$ cannot be identically zero, as it easily follows from \eqref{threetermrecsecondkindpequena1}--\eqref{threetermrecsecondkindpequena2} or from the fact that the zeros of $P_{n,0}$ and $P_{n+1,0}$ do not coincide. Hence the result holds in the case $k=0$.

Assume that the result holds for $k-1$ but it does not hold for $k$, where $1\leq k\leq p-1$. So suppose first that $\ell(n)\in[0:k-1]$ and $G_{n,k}(z)=A\psi_{n,k}(z)+B\psi_{n+1,k}(z)$ has at least $Z(n,k)+2$ zeros in $\mathbb{C}\setminus([a_{k-1},b_{k-1}]\cup\{0\})$, counting multiplicities.
Note that $G_{n,k}$ is analytic in the complement of $[a_{k-1},b_{k-1}]$ and satisfies $G_{n,k}(\overline{z})=\overline{G_{n,k}(z)}$, therefore the complex non-real zeros of $G_{n,k}$, if any, must come in conjugate pairs. Thus, we can construct a monic polynomial $L_{n,k}$ with real coefficients and degree at least $Z(n,k)+2$ whose zeros are zeros of $G_{n,k}$ in $\mathbb{C}\setminus([a_{k-1},b_{k-1}]\cup\{0\})$.

Let's assume that in fact $\ell(n)<k-1$ (the case $\ell(n)=k-1$ will be analyzed later). Then according to \eqref{varymeas:sigma} and \eqref{recurrenceforpsipequena}, we have
\begin{equation}\label{eq:rep:psink:1}
\psi_{n,k}(z)=z\int_{a_{k-1}}^{b_{k-1}}\frac{\psi_{n,k-1}(\tau)}{z-\tau} d\sigma_{k-1}^{*}(\tau),
\end{equation}
and since $\ell(n+1)=\ell(n)+1\leq k-1$, we also have
\begin{equation}\label{eq:rep:psink:2}
\psi_{n+1,k}(z)=z\int_{a_{k-1}}^{b_{k-1}}\frac{\psi_{n+1,k-1}(\tau)}{z-\tau} d\sigma_{k-1}^{*}(\tau),
\end{equation}
hence
\begin{equation}\label{rep:Gnk:1}
G_{n,k}(z)=z\int_{a_{k-1}}^{b_{k-1}}\frac{A\psi_{n,k-1}(\tau)+B\psi_{n+1,k-1}(\tau)}{z-\tau} d\sigma_{k-1}^{*}(\tau).
\end{equation}
It follows that
\[\frac{G_{n,k}(z)}{zL_{n,k}(z)} \in \mathcal{H}(\mathbb{C}\setminus [a_{k-1},b_{k-1}]).\]

Let us analyze the order of the zero at infinity. By Proposition \ref{prop:lpsinksummary}, we have
\[\psi_{n,k}(z) = O(z^{-Z(n,k-1)+Z(n,k)}), \qquad \psi_{n+1,k}(z) = O(z^{-Z(n+1,k-1)+Z(n+1,k)}).
\]
Taking this and the degree of $zL_{n,k}(z)$ into account, it follows that
\[
\frac{G_{n,k}(z)}{zL_{n,k}(z)} = O(z^{- Z(n,k-1) -3}) + O(z^{- Z(n+1,k-1) + Z(n+1,k)- Z(n,k) -3}).
\]
Since $\ell(n)\in[0:k-2]$, by \eqref{eq:rel:diffZnk} we obtain that $-Z(n+1,k-1)+Z(n+1,k)-Z(n,k)\leq -Z(n,k-1)$, hence we have
\begin{equation}\label{est:Gnk:1}
\frac{G_{n,k}(z)}{zL_{n,k}(z)} = O(z^{- Z(n,k-1) -3}).
\end{equation}
We consider now a simple Jordan curve $\gamma$ that surrounds $[a_{k-1},b_{k-1}]$ and leaves the zeros of $L_{n,k}$ outside. Then from \eqref{est:Gnk:1} and \eqref{rep:Gnk:1} we deduce that for $j=0,\ldots,Z(n,k-1)+1$,
\begin{align*}
0= & \frac{1}{2\pi i}\int_{\gamma}\frac{G_{n,k}(z)}{z L_{n,k}(z)}\,z^{j}\,dz\\
= & \frac{1}{2\pi i}\int_{\gamma}\frac{z^{j}}{L_{n,k}(z)}\left(\int_{a_{k-1}}^{b_{k-1}}\frac{A\psi_{n,k-1}(\tau)+B\psi_{n+1,k-1}(\tau)}{z-\tau} d\sigma_{k-1}^{*}(\tau)\right) dz\\
= & \int_{a_{k-1}}^{b_{k-1}}(A\psi_{n,k-1}(\tau)+B\psi_{n+1,k-1}(\tau))\,\frac{\tau^{j}}{L_{n,k}(\tau)}\,d\sigma_{k-1}^{*}(\tau),
\end{align*}
where we applied Cauchy's theorem and integral formula and Fubini's theorem. This implies that $G_{n,k-1}=A\psi_{n,k-1}+B\psi_{n+1,k-1}$ has $Z(n,k-1)+2$ zeros with odd multiplicity in $(a_{k-1},b_{k-1})$, which contradicts statement 1) for $k-1$ (recall that $\ell(n)\in[0:k-2]$ at this point). This finishes the analysis in the case $\ell(n)<k-1$.

Assume now that $\ell(n)=k-1$. Then \eqref{eq:rep:psink:1} is valid and since $\ell(n+1)=\ell(n)+1=k$, \eqref{eq:rep:psink:2} is replaced by
\[
\psi_{n+1,k}(z)=\int_{a_{k-1}}^{b_{k-1}}\frac{\psi_{n+1,k-1}(\tau)}{z-\tau}\,\tau\,d\sigma_{k-1}^{*}(\tau),
\]
hence in this case
\begin{equation}\label{rep:Gnk:2}
G_{n,k}(z)=A\psi_{n,k}(z)+B\psi_{n+1,k}(z)=\int_{a_{k-1}}^{b_{k-1}}\frac{A\, z\,\psi_{n,k-1}(\tau)+B\,\tau\,\psi_{n+1,k-1}(\tau)}{z-\tau}\,d\sigma_{k-1}^{*}(\tau).
\end{equation}

In virtue of \eqref{decayinfpsink} and \eqref{rel:NnkZnk} we have the following estimates at infinity:
\begin{align*}
\psi_{n,k}(z) & =O(z^{-N(n,k)})=O(z^{-Z(n,k-1)+Z(n,k)}),\\
\psi_{n+1,k}(z) & =O(z^{-N(n+1,k)})=O(z^{-Z(n+1,k-1)+Z(n+1,k)-1}),
\end{align*}
hence
\[
\frac{G_{n,k}(z)}{L_{n,k}(z)}=O(z^{-Z(n,k-1)-2})+O(z^{-Z(n+1,k-1)+Z(n+1,k)-Z(n,k)-3}).
\]
Since $\ell(n)=k-1$, applying \eqref{eq:rel:diffZnk} we see that $-Z(n+1,k-1)+Z(n+1,k)-Z(n,k)\leq -Z(n,k-1)+1$, so we obtain
\begin{equation}\label{est:Gnk:5}
\frac{G_{n,k}(z)}{L_{n,k}(z)}=O(z^{-Z(n,k-1)-2}).
\end{equation}

Taking a curve $\gamma$ as before, we deduce from \eqref{rep:Gnk:2} and \eqref{est:Gnk:5} that for $j=0,\ldots,Z(n,k-1)$,
\begin{align*}
0= & \frac{1}{2\pi i}\int_{\gamma}\frac{G_{n,k}(z)}{L_{n,k}(z)}\,z^{j}\,dz\\
= & \frac{1}{2\pi i}\int_{\gamma}\frac{z^{j}}{L_{n,k}(z)}\left(\int_{a_{k-1}}^{b_{k-1}}\frac{A\, z\,\psi_{n,k-1}(\tau)+B\,\tau\,\psi_{n+1,k-1}(\tau)}{z-\tau} d\sigma_{k-1}^{*}(\tau)\right) dz\\
= & \int_{a_{k-1}}^{b_{k-1}}(A\psi_{n,k-1}(\tau)+B\psi_{n+1,k-1}(\tau))\,\tau^{j}\,\frac{\tau\, d\sigma_{k-1}^{*}(\tau)}{L_{n,k}(\tau)}.
\end{align*}
This implies that $G_{n,k-1}=A\psi_{n,k-1}+B\psi_{n+1,k-1}$ has at least $Z(n,k-1)+1$ zeros with odd multiplicity in $(a_{k-1},b_{k-1})$, contradicting statement 2) for $k-1$. This concludes the proof of statement 1) for $k$.

The proofs of 2) and 3) proceed in a similar way.

Assume that $\ell(n)\in[k:p-1]$ and $G_{n,k}(z)$ has at least $Z(n,k)+1$ zeros in $\mathbb{C}\setminus([a_{k-1},b_{k-1}]\cup\{0\})$, counting multiplicities. As it was done before, we can take a monic polynomial $L_{n,k}$ with real coefficients and degree at least $Z(n,k)+1$ whose zeros are zeros of $G_{n,k}$ in $\mathbb{C}\setminus([a_{k-1},b_{k-1}]\cup\{0\})$. According to \eqref{recurrenceforpsipequena} and \eqref{varymeas:sigma}, in this case we have
\[
G_{n,k}(z)=A\psi_{n,k}(z)+B\psi_{n+1,k}(z)=\int_{a_{k-1}}^{b_{k-1}}\frac{A\psi_{n,k-1}(\tau)+B\psi_{n+1,k-1}(\tau)}{z-\tau}\,\tau\,d\sigma_{k-1}^{*}(\tau).
\]
Since $\ell(n)\geq k$ and $\ell(n+1)=\ell(n)+1>k$, the following estimates hold as $z$ approaches infinity:
\begin{align*}
\psi_{n,k}(z) & =O(z^{-N(n,k)})=O(z^{-Z(n,k-1)+Z(n,k)-1}),\\
\psi_{n+1,k}(z) & =O(z^{-N(n+1,k)})=O(z^{-Z(n+1,k-1)+Z(n+1,k)-1}),
\end{align*}
hence we have
\begin{equation}
\frac{G_{n,k}(z)}{L_{n,k}(z)}=O(z^{-Z(n,k-1)-2})+O(z^{-Z(n+1,k-1)+Z(n+1,k)-Z(n,k)-2})=O(z^{-Z(n,k-1)-2}),
\end{equation}
where in the second equality we applied the third relation in \eqref{eq:rel:diffZnk}. This implies, as it was done before, that $G_{n,k-1}=A\psi_{n,k-1}+B\psi_{n+1,k-1}$ has at least $Z(n,k-1)+1$ zeros in $(a_{k-1},b_{k-1})$, contradicting statement 2) for $k-1$.

Finally, assume that $\ell(n)=p$, and assume that $G_{n,k}(z)=A z\psi_{n,k}(z)+B\psi_{n+1,k}(z)$ has at least $Z(n,k)+2$ zeros in $\mathbb{C}\setminus([a_{k-1},b_{k-1}]\cup\{0\})$, counting multiplicities. Let $L_{n,k}$ be a polynomial with real coefficients and degree at least $Z(n,k)+2$ whose zeros are zeros of $G_{n,k}$ in $\mathbb{C}\setminus([a_{k-1},b_{k-1}]\cup\{0\})$.

Since $\ell(n)=p>k$ and $\ell(n+1)=0<k$, applying \eqref{recurrenceforpsipequena} and \eqref{varymeas:sigma} we obtain
\[
\psi_{n,k}(z)=\int_{a_{k-1}}^{b_{k-1}}\frac{\psi_{n,k-1}(\tau)}{z-\tau}\,\tau\,d\sigma_{k-1}^{*}(\tau),\qquad \psi_{n+1,k}(z)=z\int_{a_{k-1}}^{b_{k-1}}\frac{\psi_{n+1,k-1}(\tau)}{z-\tau}\,d\sigma_{k-1}^{*}(\tau),
\]
therefore
\[
G_{n,k}(z)=A z\psi_{n,k}(z)+B\psi_{n+1,k}(z)=z\int_{a_{k-1}}^{b_{k-1}}\frac{A\tau\psi_{n,k-1}(\tau)+B\psi_{n+1,k-1}(\tau)}{z-\tau}\,d\sigma_{k-1}^{*}(\tau)
\]
and the function $\frac{G_{n,k}(z)}{zL_{n,k}(z)}$ is analytic outside $[a_{k-1},b_{k-1}]$. Applying \eqref{decayinfpsink} and \eqref{rel:NnkZnk} we obtain
\begin{align*}
\psi_{n,k}(z) & =O(z^{-N(n,k)})=O(z^{-Z(n,k-1)+Z(n,k)-1}),\\
\psi_{n+1,k}(z) & =O(z^{-N(n+1,k)})=O(z^{-Z(n+1,k-1)+Z(n+1,k)}),
\end{align*}
which implies that
\[
\frac{G_{n,k}(z)}{z L_{n,k}(z)}=O(z^{-Z(n,k-1)-3})+O(z^{-Z(n+1,k-1)+Z(n+1,k)-Z(n,k)-3})=O(z^{-Z(n,k-1)-3}),
\]
where in the last equality we have used \eqref{eq:rel:diffZnk}. This easily implies, as shown before,
that the function $A z\psi_{n,k-1}(z)+B\psi_{n+1,k-1}(z)$ has at least $Z(n,k-1)+2$ zeros with odd multiplicity in $(a_{k-1},b_{k-1})$, contradicting statement 3) for $k-1$.
\end{proof}

The following lemma is Corollary 2.15 from \cite{LopMin}, and it was obtained as an application of an AT system property satisfied by the Cauchy transforms of the measures $\mu_{k,j}$, see Section 2.4 in \cite{LopMin}.

\begin{lemma}\label{lemma:orthosystem} Let $k$, $r$ be integers such that
$0\leq k\leq r\leq  p-1$. Let $\{d_j\}_{j=k}^{p-1}$ be a finite sequence of
nonnegative integers such that
\[
d_k\geq d_{k+1}\geq \cdots \geq d_{r}\geq d_{r+1}-1\geq d_{r+2}-1\geq
\cdots\geq d_{p-1}-1.
\]
Suppose $F\not\equiv 0$ is a function analytic and real-valued on $[a_k,b_k]$,
satisfying the orthogonality conditions
\begin{align}\label{orthocondgeneral1}
 \int_{a_k}^{b_k}F(\tau)\tau^{s+\delta}d\mu_{k,j}(\tau)=0,\quad 0\leq s\leq d_j-1,\quad
k\leq j\leq r,
\end{align}
\begin{align}\label{orthocondgeneral2}
 \int_{a_k}^{b_k}F(\tau)\tau^{s}d\mu_{k,j}(\tau)=0,\quad 0\leq s\leq d_j-1,\quad r<
j\leq p-1,
\end{align}
where the constant $\delta=1$ if $r<p-1$ and $d_{r+1}=d_r+1$, otherwise
$\delta$ could be taken to be either $1$ or $0$. Then, $F$ has at least
\[
N:=\sum_{j=k}^{p-1}d_j
\]
zeros of odd multiplicity in $(a_{k},b_k)$.
\end{lemma}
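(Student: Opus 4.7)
The plan is to argue by contradiction and to reduce the orthogonality conditions with respect to the hierarchy $\mu_{k,j}$ to a single orthogonality against $d\sigma_k^{*}=d\mu_{k,k}$, against a test function that belongs to a Chebyshev (AT) system on $[a_k,b_k]$. This reduction, together with the Nikishin/AT system property of the Cauchy transforms of $\mu_{k+1,j}$, will force the number of sign changes of $F$ on $(a_k,b_k)$ to be at least $N=\sum_{j=k}^{p-1}d_j$.

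Assume for contradiction that $F$ has $m\leq N-1$ zeros of odd multiplicity in $(a_k,b_k)$, located at $x_1,\dots,x_m$. Then any real polynomial $q$ of degree $m$ (times $\tau^\delta$) that vanishes at those points with odd multiplicity makes $F(\tau)\,q(\tau)\,\tau^\delta$ of constant sign on $(a_k,b_k)$. My aim is to realize such a $q$ as
\[
q(\tau)\,\tau^{\delta}=q_k(\tau)+\sum_{j=k+1}^{p-1} q_j(\tau)\,\widehat{\mu}_{k+1,j}(\tau),
\qquad \tau\in[a_k,b_k],
\]
where each $q_j$ is a real polynomial with $\deg q_j\leq d_j-1$ for $j>r$, and $\deg q_j\leq d_j-1+\delta$ for $k\leq j\leq r$ (with the appropriate $\tau^\delta$-shift absorbed into $q_k$). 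The inductive definition of $\mu_{k,j}$, namely $d\mu_{k,j}(\tau)=\tau\,\widehat{\mu}_{k+1,j}(\tau)\,d\sigma_k^{*}(\tau)$ for $j>k$, then rewrites the assumed orthogonality relations as
\[
\int_{a_k}^{b_k} F(\tau)\,\Bigl(q_k(\tau)+\sum_{j=k+1}^{p-1} q_j(\tau)\,\widehat{\mu}_{k+1,j}(\tau)\Bigr)\,d\sigma_k^{*}(\tau)=0,
\]
whereas the constant-sign property yields a strictly non-zero integral, producing the desired contradiction.

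The key step is therefore an \emph{AT counting lemma}: for every choice of polynomials $q_k,q_{k+1},\dots,q_{p-1}$ with prescribed degree bounds summing to $N-1$, the linear combination $q_k(\tau)+\sum_{j>k} q_j(\tau)\,\widehat{\mu}_{k+1,j}(\tau)$ has at most $N-1$ zeros on $[a_k,b_k]$, and conversely any prescribed set of $m\leq N-1$ sign-change points inside $(a_k,b_k)$ can be matched (with the appropriate $\tau^\delta$-shift) by a suitable choice of the $q_j$'s. This counting/surjectivity statement is exactly the AT system property of the Markov functions $\widehat{\mu}_{k+1,j}$ (Proposition 2.2 of this paper identifies them, up to monomial prefactors, with the Cauchy transforms of the Nikishin subsystem $\mathcal{N}(\sigma_{k+1},\dots,\sigma_{p-1})$ on $[a_{k+1},b_{k+1}]\subset\mathbb{C}\setminus[a_k,b_k]$). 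The inequality $d_r\geq d_{r+1}-1$ together with the parameter $\delta\in\{0,1\}$ is precisely the combinatorial condition that keeps the total degree count consistent and allows $\tau^\delta$ to be freely absorbed into $q_k$ when building the test function.

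The main obstacle will be the construction of the $q_j$'s matching exactly the $m$ prescribed sign changes of $F$, together with the right degree bounds and the $\tau^\delta$-shift. I would handle it by an interpolation/dimension argument: the space of admissible tuples $(q_k,\dots,q_{p-1})$ has dimension $N$ (the degree bounds add up to $N-1$, plus a scaling), while imposing vanishing at $m\leq N-1$ prescribed points with odd multiplicity gives at most $N-1$ linear conditions, leaving a non-trivial solution. The AT property then guarantees that this non-trivial solution cannot vanish identically on $[a_k,b_k]$ and has no further zeros there, so $F\cdot(q_k+\sum q_j\widehat{\mu}_{k+1,j})$ keeps a constant sign on $(a_k,b_k)$, contradicting the above integral identity.
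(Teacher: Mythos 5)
Your proposal follows essentially the same route as the paper, which does not prove the lemma in-house but quotes it as Corollary 2.15 of \cite{LopMin}, obtained precisely as you do: unfold the hierarchy via $d\mu_{k,j}(\tau)=\tau\,\widehat{\mu}_{k+1,j}(\tau)\,d\sigma_{k}^{*}(\tau)$ to reduce all the orthogonality relations to a single measure, and then apply the AT-system property of the Cauchy transforms $\widehat{\mu}_{k+1,j}$ to build a test function with prescribed sign changes. The one soft spot is your final step: mere dimension counting gives a nontrivial combination vanishing at the $m\leq N-1$ prescribed points but does not by itself rule out extra zeros or non-nodal zeros, so you need the standard Krein-type consequence of the AT property (a generalized polynomial changing sign exactly at a prescribed set of at most $N-1$ points and vanishing nowhere else) --- which is exactly the ingredient the cited AT-system machinery of \cite{LopMin} supplies.
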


Lemma~\ref{lemma:orthosystem} will be repeatedly applied in the proof of the following result, which complements Lemma~\ref{lemma:interlac:1}.

\begin{lemma}\label{lemma:interlac:2}
 Assume that $A, B\in\mathbb{R}$, $|A|+|B|>0$, and let $k\in[0:p-1]$ and $n\geq 0$ be integers. Then the function $G_{n,k}$ defined in \eqref{def:Gnk} satisfies the following properties:
\begin{itemize}
\item[1)] If $\ell(n)\in[0:k-1]$, then $G_{n,k}$ has at least $Z(n,k)$ zeros with odd multiplicity in $(a_{k},b_{k})$.
\item[2)] If $\ell(n)\in[k:p-1]$, then $G_{n,k}$ has at least $Z(n,k)-1$ zeros with odd multiplicity in $(a_{k},b_{k})$.
\item[3)] If $\ell(n)=p$, then $G_{n,k}$ has at least $Z(n,k)$ zeros with odd multiplicity in $(a_{k},b_{k})$.
\end{itemize}
\end{lemma}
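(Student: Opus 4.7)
The plan is to derive, for each case on $\ell(n)$, a collection of orthogonality conditions for $G_{n,k}$ with respect to the measures $\mu_{k,j}$, $k\leq j\leq p-1$, by intersecting the orthogonality conditions \eqref{orthogredPsink} satisfied by $\psi_{n,k}$ and $\psi_{n+1,k}$, and then to invoke Lemma~\ref{lemma:orthosystem} to convert these into the required lower bound on zeros of odd multiplicity. Note that $G_{n,k}$ is analytic and real-valued on $[a_k,b_k]$ since $[a_k,b_k]\cap[a_{k-1},b_{k-1}]=\emptyset$ and the measures and functions involved have real values on the real line; the non-triviality $G_{n,k}\not\equiv 0$ follows from $|A|+|B|>0$ together with the linear independence of $\psi_{n,k}$ and $\psi_{n+1,k}$ (respectively $z\psi_{n,k}$ and $\psi_{n+1,k}$ when $\ell(n)=p$), visible for instance from their different orders of decay at infinity encoded in \eqref{rel:NnkZnk}.

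Keeping the notation $\varsigma(n,j)=\lceil(\ell(n)-j)/(p+1)\rceil$ and $\kappa(n,j)$ from the proof of Lemma~\ref{lem:valuesdiffZnk}, I would in each case determine the intersection of the ranges of admissible $s$ for $\psi_{n,k}$ and $\psi_{n+1,k}$, exploiting that $\kappa(n+1,j)\geq \kappa(n,j)$ when $\ell(n)<p$ and $\kappa(n+1,j)=\kappa(n,j)-1$ for every $j$ when $\ell(n)=p$. In Case 1 ($\ell(n)\in[0:k-1]$), since $\ell(n)\leq\ell(n+1)\leq k\leq j$, both lower bounds are $0$ and the common range is $s\in[0,\kappa(n,j)]$, giving $\kappa(n,j)+1$ conditions for each $j$. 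In Case 2 ($\ell(n)\in[k:p-1]$), the range is $s\in[1,\kappa(n,j)]$ for $j\in[k:\ell(n)]$ (forced by the lower bound of $\psi_{n+1,k}$ at $j=\ell(n)$) and $s\in[0,\kappa(n,j)]$ for $j\in[\ell(n)+1:p-1]$. In Case 3 ($\ell(n)=p$), writing the orthogonality of $z\psi_{n,k}$ as a shifted condition on $\psi_{n,k}$ gives $s\in[0,\kappa(n,j)-1]$ for $z\psi_{n,k}$ and $s\in[0,\kappa(n+1,j)]=[0,\kappa(n,j)-1]$ for $\psi_{n+1,k}$, so the common range is $s\in[0,\kappa(n,j)-1]$ for every $j\in[k:p-1]$.

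To apply Lemma~\ref{lemma:orthosystem}, I would choose the parameters as follows: in Case 1, $r=k-1$ (empty first group), $\delta=0$, $d_j=\kappa(n,j)+1$; in Case 2, $r=\ell(n)$, $\delta=1$, $d_j=\kappa(n,j)$ for $j\in[k:\ell(n)]$ and $d_j=\kappa(n,j)+1$ for $j\in[\ell(n)+1:p-1]$; and in Case 3, $r=k-1$, $\delta=0$, $d_j=\kappa(n,j)$. In each case the monotonicity hypothesis $d_k\geq d_{k+1}\geq\cdots\geq d_r\geq d_{r+1}-1\geq\cdots\geq d_{p-1}-1$ reduces to the manifest monotonicity of $j\mapsto\kappa(n,j)$, and the constraint on $\delta$ in the lemma is compatible with our choices. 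A direct computation using the formulas for $M_j$ from Subsection~\ref{counting} shows that the total $N=\sum_{j=k}^{p-1}d_j$ equals $Z(n,k)$, $Z(n,k)-1$, and $Z(n,k)$ in Cases 1, 2, and 3 respectively, yielding exactly the claimed lower bounds.

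The main bookkeeping obstacle is the careful treatment of the boundary case $j=\ell(n)$ in Case 2, where the lower bounds of $\psi_{n,k}$ and $\psi_{n+1,k}$ differ (this is precisely what forces the loss of one orthogonality condition and hence the weaker bound $Z(n,k)-1$), and the verification that the monotonicity condition in Lemma~\ref{lemma:orthosystem} persists across the index $j=r+1=\ell(n)+1$ where $d_j$ jumps by one. All the remaining subcases (such as $\ell(n)=k$ giving an empty first block, or $\ell(n)=p-1$ giving an empty third block) fit seamlessly into the same scheme.
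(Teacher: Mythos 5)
Your proposal is correct and follows essentially the same route as the paper: extract the common orthogonality conditions of $\psi_{n,k}$ and $\psi_{n+1,k}$ (suitably shifted by $\tau$ when $\ell(n)=p$) from \eqref{orthogredPsink}, and feed them into Lemma~\ref{lemma:orthosystem} with the parameter choices you describe; your organization via intersecting the ranges $[\varsigma,\kappa]$ is just a more compact packaging of the paper's sub-case-by-sub-case listing. Two cosmetic points: the lemma requires $k\leq r$, so the ``empty first group'' should be realized as $r=k$, $\delta=0$ (equivalent to your $r=k-1$); and in Case 3 the identity $\kappa(n+1,j)=\kappa(n,j)-1$ fails at $j=\eta(n)$ (there $\kappa(n+1,j)=\kappa(n,j)$), though the common range $[0,\kappa(n,j)-1]$ and hence the count $Z(n,k)$ are unaffected.
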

\begin{proof}
Assume that $\ell(n)<k$ and $\eta(n)+\ell(n)\in[k:p-1]$. Then the relations \eqref{relbounds:1}--\eqref{relbounds:2} hold, and recall that if $j=\eta(n)+\ell(n)$ then $\kappa(n+1,k)=\kappa(n,k)+1$. As a consequence, both functions $\psi_{n,k}$ and $\psi_{n+1,k}$ satisfy the same orthogonality conditions \eqref{orthogredPsink}. Hence
\begin{equation}\label{int:Gnk:1}
\int_{a_{k}}^{b_{k}}G_{n,k}(\tau)\,\tau^{s}\,d\mu_{k,j}(\tau)=0,\quad
\varsigma(n,j)\leq s\leq \kappa(n,j),\quad k\leq j\leq p-1.
\end{equation}
In this case $\varsigma(n,j)=0$ for all $j\in[k:p-1]$ and the sequence $\{\kappa(n,j)\}_{j=k}^{p-1}$ is non-increasing. So we can apply Lemma~\ref{lemma:orthosystem} to $F=G_{n,k}$, taking $\delta=0$ in \eqref{orthocondgeneral1}, $r=k$ (for example), and $d_{j}=\kappa(n,j)$ for all $j\in[k:p-1]$. It follows that $G_{n,k}$ has at least $Z(n,k)$ zeros with odd multiplicity in $(a_{k},b_{k})$.

Assume now that $\ell(n)<k$ and $\eta(n)+\ell(n)\notin[k:p-1]$. In this case we have $\varsigma(n,j)=\varsigma(n+1,j)=0$ for all $j\in[k:p-1]$, and as it was observed in the proof of Lemma~\ref{lem:valuesdiffZnk}, we also have $\kappa(n,j)=\kappa(n+1,j)$ for all $j$. Hence \eqref{int:Gnk:1} holds and applying Lemma~\ref{lemma:orthosystem} to $F=G_{n,k}$ as before we obtain that $G_{n,k}$ has at least $Z(n,k)$ zeros with odd multiplicity in $(a_{k},b_{k})$. This finishes the proof of part 1).

Suppose that $\ell(n)\in[k:p-1]$ and $\eta(n)+\ell(n)\in[k:p-1]$, and assume additionally for the moment that $\eta(n)=0$. We then deduce from \eqref{orthogredPsink} and \eqref{relbounds:5} that the function $G_{n,k}=A\psi_{n,k}+B\psi_{n+1,k}$ satisfies the following orthogonality conditions. For each $j\in[k:\ell(n)-1]$,
\[
\int_{a_{k}}^{b_{k}}G_{n,k}(\tau)\tau^{s+1} d\mu_{k,j}(\tau)=0,\qquad 0\leq s\leq \lambda-1,
\]
for $j=\ell(n)$,
\[
\int_{a_{k}}^{b_{k}}G_{n,k}(\tau)\tau^{s+1} d\mu_{k,j}(\tau)=0,\qquad 0\leq s\leq \lambda-2,
\]
and for each $j\in[\ell(n)+1:p-1]$ we have
\[
\int_{a_{k}}^{b_{k}}G_{n,k}(\tau)\tau^{s} d\mu_{k,j}(\tau)=0,\qquad 0\leq s\leq \lambda-1.
\]
If we apply Lemma~\ref{lemma:orthosystem} to $F=G_{n,k}$, taking $\delta=1$, $r=\ell(n)$, and indices $d_{j}$ equal to the upper bounds of the parameter $s$ in the orthogonality conditions, we deduce that $G_{n,k}$ has at least $Z(n,k)-1$ zeros with odd multiplicity in $(a_{k}, b_{k})$.

If $\ell(n)\in[k:p-1]$, $\eta(n)+\ell(n)\in[k:p-1]$, and $\eta(n)>0$, then from \eqref{orthogredPsink} and \eqref{relbounds:6} we deduce that $G_{n,k}$ satisfies the following orthogonality conditions:
\begin{align*}
\int_{a_{k}}^{b_{k}}G_{n,k}(\tau)\tau^{s+1} d\mu_{k,j}(\tau) & =0,\qquad 0\leq s\leq \lambda-1,\quad j\in[k:\ell(n)],\\
\int_{a_{k}}^{b_{k}}G_{n,k}(\tau)\tau^{s} d\mu_{k,j}(\tau) & =0,\qquad 0\leq s\leq \lambda,\quad j\in[\ell(n)+1:\eta(n)+\ell(n)-1],\\
\int_{a_{k}}^{b_{k}}G_{n,k}(\tau)\tau^{s} d\mu_{k,j}(\tau) & =0,\qquad 0\leq s\leq \lambda-1,\quad j\in[\eta(n)+\ell(n):p-1],
\end{align*}
which implies by Lemma~\ref{lemma:orthosystem} that $G_{n,k}$ has at least $Z(n,k)-1$ zeros with odd multiplicity in $(a_{k}, b_{k})$.

If $\ell(n)\in[k:p-1]$ and $\eta(n)+\ell(n)\geq p+k$, then from \eqref{orthogredPsink} and \eqref{relbounds:7} we deduce:
\begin{align*}
\int_{a_{k}}^{b_{k}}G_{n,k}(\tau)\tau^{s+1} d\mu_{k,j}(\tau) & =0,\qquad 0\leq s\leq \lambda,\quad j\in[k:\eta(n)+\ell(n)-p-1],\\
\int_{a_{k}}^{b_{k}}G_{n,k}(\tau)\tau^{s+1} d\mu_{k,j}(\tau) & =0,\qquad 0\leq s\leq \lambda-1,\quad j\in[\eta(n)+\ell(n)-p:\ell(n)],\\
\int_{a_{k}}^{b_{k}}G_{n,k}(\tau)\tau^{s} d\mu_{k,j}(\tau) & =0,\qquad 0\leq s\leq \lambda,\quad j\in[\ell(n):p-1],
\end{align*}
showing again that $G_{n,k}=A\psi_{n,k}+B\psi_{n+1,k}$ has at least $Z(n,k)-1$ zeros with odd multiplicity in $(a_{k},b_{k})$.

To finish the proof of part 2), assume now that $\ell(n)\in[k:p-1]$ and $\eta(n)+\ell(n)\in[p:p+k-1]$. Then from \eqref{relbounds:8} we obtain
\begin{align*}
\int_{a_{k}}^{b_{k}}G_{n,k}(\tau)\tau^{s+1} d\mu_{k,j}(\tau) & =0,\qquad 0\leq s\leq \lambda-1,\quad j\in[k:\ell(n)],\\
\int_{a_{k}}^{b_{k}}G_{n,k}(\tau)\tau^{s} d\mu_{k,j}(\tau) & =0,\qquad 0\leq s\leq \lambda,\quad j\in[\ell(n)+1:p-1].
\end{align*}
This implies again that $G_{n,k}$ has at least $Z(n,k)-1$ zeros with odd multiplicity in $(a_{k},b_{k})$.

The proof of part 3) is left to the reader (apply \eqref{relbounds:9}--\eqref{relbounds:10}).
\end{proof}

As an immediate consequence of Lemmas~\ref{lemma:interlac:1} and \ref{lemma:interlac:2}, we obtain the following:

\begin{corollary}\label{cor:interl}
Assume that $A, B\in\mathbb{R}$, $|A|+|B|>0$, and let $k\in[0:p-1]$ and $n\geq 0$ be integers. Then all the zeros of $G_{n,k}$ in $\mathbb{C}\setminus([a_{k-1},b_{k-1}]\cup\{0\})$ are real and simple.
\end{corollary}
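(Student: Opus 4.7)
The plan is to combine the upper bound in Lemma~\ref{lemma:interlac:1} with the lower bound in Lemma~\ref{lemma:interlac:2} by a simple pigeonhole-type counting argument, exploiting the fact that $G_{n,k}$ is real on the real line off $[a_{k-1},b_{k-1}]$.

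First I would record the key symmetry observation: from Definition~\ref{definitionpsi} the functions $\psi_{n,k}$ are analytic on $\mathbb{C}\setminus[a_{k-1},b_{k-1}]$ and take real values on $\mathbb{R}\setminus[a_{k-1},b_{k-1}]$, hence $G_{n,k}(\bar z)=\overline{G_{n,k}(z)}$ in that domain. Consequently, any non-real zero of $G_{n,k}$ in $\mathbb{C}\setminus([a_{k-1},b_{k-1}]\cup\{0\})$ must come paired with its complex conjugate and with the same multiplicity, so such zeros contribute in pairs of even total count.

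Next, denote by $N_{\mathrm{tot}}$ the total number of zeros of $G_{n,k}$ in $\mathbb{C}\setminus([a_{k-1},b_{k-1}]\cup\{0\})$ counted with multiplicity, and by $N_{\mathrm{odd}}$ the number of zeros of odd multiplicity that lie in $(a_k,b_k)$. In each of the three cases of $\ell(n)$, a direct subtraction of the bounds from Lemmas~\ref{lemma:interlac:1} and \ref{lemma:interlac:2} yields $N_{\mathrm{tot}}-N_{\mathrm{odd}}\le 1$: indeed in cases $\ell(n)\in[0:k-1]$ and $\ell(n)=p$ the bounds are $N_{\mathrm{tot}}\le Z(n,k)+1$ and $N_{\mathrm{odd}}\ge Z(n,k)$, while in case $\ell(n)\in[k:p-1]$ the bounds are $N_{\mathrm{tot}}\le Z(n,k)$ and $N_{\mathrm{odd}}\ge Z(n,k)-1$.

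Finally I would show that each of the pathologies we want to exclude forces $N_{\mathrm{tot}}-N_{\mathrm{odd}}\ge 2$, contradicting the inequality just obtained. A zero in $(a_k,b_k)$ of even multiplicity $m\ge 2$ contributes $m\ge 2$ to $N_{\mathrm{tot}}$ and $0$ to $N_{\mathrm{odd}}$; a zero in $(a_k,b_k)$ of odd multiplicity $m\ge 3$ contributes $m\ge 3$ to $N_{\mathrm{tot}}$ and only $1$ to $N_{\mathrm{odd}}$; any zero outside $(a_k,b_k)$ of multiplicity $\ge 2$ contributes $\ge 2$ to $N_{\mathrm{tot}}$ and $0$ to $N_{\mathrm{odd}}$; and by the symmetry observation, the presence of any non-real zero forces a pair contributing $\ge 2$ to $N_{\mathrm{tot}}$ and $0$ to $N_{\mathrm{odd}}$. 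In each scenario we would have $N_{\mathrm{tot}}-N_{\mathrm{odd}}\ge 2$, a contradiction. Hence every zero of $G_{n,k}$ in $\mathbb{C}\setminus([a_{k-1},b_{k-1}]\cup\{0\})$ is real and simple, establishing the corollary.

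I do not anticipate a serious obstacle, since essentially all the analytical work is already packaged in the two preceding lemmas; the only point requiring care is the exhaustive bookkeeping above, ensuring that every geometric possibility (complex conjugate pair, multiple real zero in or outside $(a_k,b_k)$) is accounted for and excluded by the same slack-$1$ inequality.
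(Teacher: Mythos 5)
Your argument is correct and is exactly the counting argument the paper has in mind: the paper states Corollary~\ref{cor:interl} as an ``immediate consequence'' of Lemmas~\ref{lemma:interlac:1} and \ref{lemma:interlac:2} without writing out the details, and your slack-$1$ bookkeeping (each pathology --- conjugate pair, even-order zero, odd order $\geq 3$, or multiple zero off $(a_k,b_k)$ --- adds at least $2$ to $N_{\mathrm{tot}}-N_{\mathrm{odd}}$) is the intended justification, with the conjugate-symmetry $G_{n,k}(\overline{z})=\overline{G_{n,k}(z)}$ already noted inside the proof of Lemma~\ref{lemma:interlac:1}. No gaps.
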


\subsection{Proof of Theorem~\ref{theo:interl}}

By definition, the zeros of $P_{n,k}$ are the zeros of $\psi_{n,k}$ in $(a_{k},b_{k})$, see Definition~\ref{Def:polyPnk}. By Proposition~\ref{prop:lpsinksummary}, these zeros are all simple.

First, let us show that the functions $\psi_{n,k}$ and $\psi_{n+1,k}$ cannot have a common zero in $(a_{k},b_{k})$. Assume the contrary, and let $x_{0}\in(a_{k},b_{k})$ satisfy $\psi_{n,k}(x_{0})=\psi_{n+1,k}(x_{0})=0$. Then we have $\psi_{n,k}'(x_{0})\neq 0$, $\psi_{n+1,k}'(x_{0})\neq 0$. Now take
\[
A=\begin{cases}
1, & \mathrm{if}\,\ell(n)\neq p,\\[0.5em]
1/x_{0}, & \mathrm{if}\,\ell(n)=p,
\end{cases}\qquad B=-\frac{\psi_{n,k}'(x_{0})}{\psi_{n+1,k}'(x_{0})},
\]
and consider the function $G_{n,k}$ given by \eqref{def:Gnk}. With this choice of $A$ and $B$ we obtain $G_{n,k}(x_{0})=G_{n,k}'(x_{0})=0$, which contradicts Corollary~\ref{cor:interl}.

Assume now that $\ell(n)\neq p$, and let $y\in(a_{k},b_{k})$ be arbitrary but fixed. Taking $A=\psi_{n+1,k}(y)$, $B=-\psi_{n,k}(y)$, we know by the argument in the previous paragraph that $|A|+|B|>0$. Since
\[
\psi_{n+1,k}(y)\,\psi_{n,k}(y)-\psi_{n,k}(y)\,\psi_{n+1,k}(y)=0,
\]
and the zeros on $(a_{k},b_{k})$ of $G_{n,k}(x)=\psi_{n+1,k}(y)\,\psi_{n,k}(x)-\psi_{n,k}(y)\,\psi_{n+1,k}(x)$ are simple, it follows that
\[
\psi_{n+1,k}(y)\,\psi_{n,k}'(y)-\psi_{n,k}(y)\,\psi_{n+1,k}'(y)\neq 0.
\]
But $\psi_{n+1,k}(y)\,\psi_{n,k}'(y)-\psi_{n,k}(y)\,\psi_{n+1,k}'(y)$ is a continuous real function on $(a_{k},b_{k})$, so it must have constant sign on this interval. Evaluating this function at two consecutive zeros of $\psi_{n+1,k}$, since the sign of $\psi_{n+1,k}'$ at these two points changes, the sign of $\psi_{n,k}$ must also change. By Bolzano's theorem we deduce that there must be an intermediate zero of $\psi_{n,k}$. Similarly, one proves that between two consecutive zeros of $\psi_{n,k}$ on $(a_{k},b_{k})$ there is one of $\psi_{n+1,k}$.

The argument in the case $\ell(n)=p$ is analogous, so we leave the analysis to the reader.

\section{Ratio asymptotics}\label{sec:ratioasymp}

\subsection{Main ideas}

Let us first outline the main ideas in the proof of ratio asymptotics for the polynomials $P_{n,k}$. The method we use for obtaining the ratio asymptotic results was first employed in \cite{AptLopRocha}. The argument goes as follows. Let $\rho\in\{0,\ldots,p(p+1)-1\}$ be fixed but arbitrary. We consider the $p$ families of ratios
\begin{equation}\label{eq:familyratios}
\left\{\frac{P_{\lambda p(p+1)+\rho+1,k}(z)}{P_{\lambda p(p+1)+\rho,k}(z)}\right\}_{\lambda\in\mathbb{N}},\qquad k=0,\ldots,p-1.
\end{equation}
For each $k$ fixed, the sequence \eqref{eq:familyratios} is uniformly bounded on compact subsets of $\mathbb{C}\setminus[a_{k},b_{k}]$, due to the interlacing property of the zeros of the polynomials $P_{n,k}$.

By Montel's theorem, there exists a subsequence $\Lambda\subset\mathbb{N}$ such that for each $k=0,\ldots,p-1$, the limit
\begin{equation}\label{eq:limratios}
\lim_{\lambda\in\Lambda}\frac{P_{\lambda p(p+1)+\rho+1,k}(z)}{P_{\lambda p(p+1)+\rho,k}(z)}=\widetilde{F}_{k}^{(\rho)}(z),\qquad z\in\mathbb{C}\setminus[a_{k},b_{k}]
\end{equation}
holds, uniformly on compact subsets of the indicated region. In principle, the limiting functions $\widetilde{F}_{k}^{(\rho)}(z)$ depend on the subsequence $\Lambda$, but it will be our main goal to show that in fact they are independent of $\Lambda$, proving this way the existence of the limits
\[
\lim_{\lambda\rightarrow\infty}\frac{P_{\lambda p(p+1)+\rho+1,k}(z)}{P_{\lambda p(p+1)+\rho,k}(z)}=\widetilde{F}_{k}^{(\rho)}(z),\qquad z\in\mathbb{C}\setminus[a_{k},b_{k}],
\]
for every $\rho$ and $k$ fixed as before.

In order to prove the independence of the functions $\widetilde{F}_{k}^{(\rho)}$ from $\Lambda$, we first identify these functions as Szeg\H{o} functions or Szeg\H{o} functions multiplied by certain conformal mappings, the Szeg\H{o} functions being associated with weights that can be expressed themselves in terms of the functions $\widetilde{F}^{(\rho)}_{k}$. This identification is accomplished using results on ratio and relative asymptotics of orthogonal polynomials with respect to varying measures that were obtained in \cite{BarCalLop}. Here we also apply the asymptotic formulas \eqref{converghnk}.

Using the boundary value properties of the Szeg\H{o} functions, we then show that a certain normalization $F_{k}^{(\rho)}$ of the functions $\widetilde{F}_{k}^{(\rho)}$ satisfies a system of boundary value problems. Then, to conclude the proof of the uniqueness of the limiting functions $\widetilde{F}_{k}^{(\rho)}$, it is enough to show that this boundary value problem has a unique solution.

\subsection{Asymptotics of the functions $h_{n,k}$}

A first step in the asymptotic analysis is to obtain the asymptotic behavior of the functions $h_{n,k}$. This is gathered in the following result.

\begin{proposition}
Assume that for each $k=0,\ldots,p-1$, the measure $\sigma_{k}^{*}$ has positive Radon-Nikodym derivative with respect to Lebesgue measure a.e. on $[a_{k},b_{k}]$. Then for all $k=0,\ldots,p-1$ and $\ell=0,\ldots,p,$ fixed,
\begin{equation}\label{convergtoequil}
p_{m(p+1)+\ell,k}^{2}(\tau)\,d|\nu_{m(p+1)+\ell,k}|(\tau)\xrightarrow[m\rightarrow\infty]{*} \frac{1}{\pi}\,\frac{d\tau}{\sqrt{(b_{k}-\tau)(\tau-a_{k})}}.
\end{equation}
Consequently, for each $k=1,\ldots,p$ and $\ell=0,\ldots,p$ fixed,
\begin{equation}\label{converghnk}
\lim_{m\rightarrow\infty}\varepsilon_{m(p+1)+\ell,k-1}\,h_{m(p+1)+\ell,k}(z)=\begin{cases}
\frac{z}{\sqrt{(z-b_{k-1})(z-a_{k-1})}} & \mathrm{if}\ \ell<k,\\[1em]
\frac{1}{\sqrt{(z-b_{k-1})(z-a_{k-1})}} & \mathrm{if}\ k\leq \ell,
\end{cases}
\end{equation}
uniformly on compact subsets of $\mathbb{C}\setminus[a_{k-1},b_{k-1}]$, where we take the branch of the square root such that $\sqrt{z}>0$ for $z$ real, $z>0$.
\end{proposition}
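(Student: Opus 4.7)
The plan is to establish the two parts of the proposition in tandem by induction on $k$, exploiting the fact that \eqref{intrep:hnk} expresses $h_{n,k}$ as a Cauchy-type transform of exactly the measure that appears in \eqref{convergtoequil} at index $k-1$. First I would observe that once \eqref{convergtoequil} is established for $k-1$ (along the arithmetic subsequence $n=m(p+1)+\ell$, with $\ell$ fixed), relation \eqref{converghnk} at index $k$ follows by applying it inside \eqref{intrep:hnk} after rewriting that identity in terms of the normalized quantities $p_{n,k-1}$, $|\nu_{n,k-1}|$ and the sign $\varepsilon_{n,k-1}$ via \eqref{eq:def:lpnk}--\eqref{eq:def:hnk} and \eqref{positivenunk}. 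Weak-$*$ convergence of $p_{n,k-1}^{2}\,d|\nu_{n,k-1}|$ to the Chebyshev measure on $[a_{k-1},b_{k-1}]$ immediately yields uniform convergence of the Cauchy transforms on compact subsets of $\mathbb{C}\setminus[a_{k-1},b_{k-1}]$, and the limit is computed from the classical identity
\[
\int_{a_{k-1}}^{b_{k-1}} \frac{1}{z-\tau}\,\frac{d\tau/\pi}{\sqrt{(b_{k-1}-\tau)(\tau-a_{k-1})}} = \frac{1}{\sqrt{(z-a_{k-1})(z-b_{k-1})}},
\]
with the branch specified in the statement. The factor $z$ or $1$ in front is inherited directly from the two cases in \eqref{intrep:hnk}.

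For the harder part, \eqref{convergtoequil}, I would rely on a Rakhmanov-type theorem for orthonormal polynomials with respect to varying measures, such as the one in \cite{BarCalLop}. The polynomial $p_{n,k}$ is orthonormal of degree $Z(n,k)$ with respect to $|\nu_{n,k}|$, whose density on $[a_k,b_k]$ equals
\[
\frac{|h_{n,k}(\tau)|}{|P_{n,k-1}(\tau)\,P_{n,k+1}(\tau)|}\,\frac{d|\sigma_{n,k}|}{d\tau}(\tau).
\]
The hypothesis that $\sigma_k^*$ has a positive Radon--Nikodym derivative a.e., together with the fact that on $[a_k,b_k]$ the factor $\tau$ distinguishing $d\sigma_{n,k}$ from $d\sigma_k^*$ keeps constant nonzero sign (by \eqref{relpositint}), ensures that the absolutely continuous part of $|\nu_{n,k}|$ is strictly positive a.e. on $[a_k,b_k]$; the polynomial denominators $|P_{n,k\pm 1}|$ are smooth and strictly positive on $[a_k,b_k]$ since their zeros lie in the disjoint intervals $(a_{k\pm 1},b_{k\pm 1})$, and $|h_{n,k}|$ is smooth and nonvanishing on the open interval $(a_k,b_k)$ by the integral representation \eqref{intrep:hnk} and the positivity established in the definition of $\varepsilon_{n,k-1}$. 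These are precisely the ingredients needed to invoke the varying-measure analogue of Rakhmanov's theorem, which then gives \eqref{convergtoequil}.

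The induction is then set up as follows. The base case $k=0$ is the simplest: $h_{n,0}\equiv 1$ and $P_{n,-1}\equiv 1$, so $|\nu_{n,0}|$ has density $|P_{n,1}|^{-1}\,d|\sigma_{n,0}|/d\tau$, and the ratio asymptotic result for varying measures applies directly under the positivity assumption on $\sigma_0^*$. This produces \eqref{convergtoequil} for $k=0$, which through the Cauchy-transform argument of the first paragraph produces \eqref{converghnk} for $k=1$. The inductive step repeats the pattern: having \eqref{converghnk} at index $k$, together with the already-known interlacing of the zeros of $P_{n,k\pm 1}$ from Theorem~\ref{theo:interl} (which controls the ratios of $|P_{n+1,k\pm 1}|/|P_{n,k\pm 1}|$ on $[a_k,b_k]$), verifies the hypotheses of the varying-measure theorem needed to deduce \eqref{convergtoequil} at index $k$, hence \eqref{converghnk} at index $k+1$, and so on. The periodicity modulo $p+1$ of the residue $\ell(n)$ is what forces us to restrict to the subsequence $n=m(p+1)+\ell$ rather than treat all $n$ uniformly.

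The main obstacle, as is typical in this scheme, is verifying the precise hypotheses of the varying-measure theorem from \cite{BarCalLop}, since the varying weight involves the very polynomials $P_{n,k\pm 1}$ and functions $h_{n,k}$ whose asymptotics we are trying to determine; the circularity is broken by the inductive ordering outlined above, together with the zero-interlacing results from Section~\ref{sec:interlacing} which supply the required control of $P_{n+1,k\pm 1}/P_{n,k\pm 1}$ near $[a_k,b_k]$ independently of the ratio asymptotics being proved.
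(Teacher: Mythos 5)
Your overall strategy coincides with the paper's: a tandem induction on $k$ in which \eqref{convergtoequil} at level $k-1$ yields \eqref{converghnk} at level $k$ through the Cauchy-transform identity applied to \eqref{intrep:hnk} (rewritten via \eqref{intrep:hnk} in its normalized form), and the latter feeds the varying-measure results of \cite{BarCalLop} to produce \eqref{convergtoequil} at level $k$; the base case $k=0$ is handled the same way. One correction is in order, however. The hypothesis that must be verified in order to apply Corollary 3 of \cite{BarCalLop} is \emph{strong admissibility} of the pair consisting of the continuous part of the varying measure and the polynomial denominators $P_{n,k-1}P_{n,k+1}$. This requires (i) that the zeros of $P_{n,k-1}P_{n,k+1}$ stay at a positive distance from $[a_{k},b_{k}]$, which is automatic since they lie in the disjoint intervals $[a_{k\pm1},b_{k\pm1}]$; (ii) the degree balance $\deg(P_{n,k-1})+\deg(P_{n,k+1})-2\deg(P_{n,k})=O(1)$, which follows from \eqref{eq:ineqZnk} in the base case and from \eqref{asympZnk} in general, and which your write-up omits entirely; and (iii) uniform convergence on $[a_{k},b_{k}]$ of the factor $|h_{n,k}|$, which is exactly what the induction hypothesis supplies via \eqref{converghnk}. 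It does \emph{not} require any control of the consecutive ratios $P_{n+1,k\pm1}/P_{n,k\pm1}$, so your appeal to Theorem~\ref{theo:interl} is out of place here: interlacing only yields normality of those ratio families (and is used for that purpose later, in the proof of Theorem~\ref{ratioP}), not convergence, and the supposed circularity never arises because the weak-$*$ statement \eqref{convergtoequil} concerns one index $n$ at a time and never compares $n$ with $n+1$. Relatedly, the pointwise positivity of $|h_{n,k}|$ for each fixed $n$ invoked in your second paragraph is not an adequate substitute for the uniform control in (iii).
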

\begin{proof}
Taking $f(\tau)=\frac{1}{z-\tau}$ and using formula \eqref{intrep:hnk}, \eqref{converghnk} for $k+1$ follows directly from \eqref{convergtoequil} for $k$ and the well known identity
\[
\label{convghnk^*}
\frac{1}{\pi}\int_{a_{k}}^{b_{k}}\frac{1}{z-\tau}\,\frac{d\tau}{\sqrt{(b_{k}-\tau)(\tau-a_{k})}}=\frac{1}{\sqrt{(z-b_{k})(z-a_{k})}},\qquad z\in\mathbb{C}\setminus[a_{k},b_{k}].
\]
Therefore, we limit ourselves to proving \eqref{convergtoequil}. This is done by induction on $k$.

Take $n=m(p+1)+\ell$ with $\ell$ fixed, and so the measure $\sigma_{n,k}$ remains fixed as we let $m\rightarrow\infty$. In particular, for $k=0$, we have that
\[
d\sigma_{n,0}(\tau)=\begin{cases}
d\sigma_{0}^{*}(\tau), & \ell=0,\\
\tau\,d\sigma_{0}^{*}(\tau), & \ell>0,
\end{cases}
\qquad h_{n,0} \equiv 1,
\]
and according to \eqref{positivenunk},
\[
p_{m(p+1)+\ell,0}^{2}(\tau)\,d|\nu_{m(p+1)+\ell,0}|(\tau)=p_{m(p+1)+\ell,0}^{2}(\tau)\,\frac{d\sigma_{m(p+1)+\ell,0}(\tau)}{|P_{m(p+1)+\ell,1}
(\tau)|}.
 \]
Note that $\varepsilon_{m(p+1)+\ell,0}=1$. Since the zeros of the polynomials $P_{m(p+1)+\ell,1}$ are bounded away from $[a_{0},b_{0}]$ (the support of the measure $\sigma_{n,0}$) and $\deg(P_{m(p+1)+\ell,1})-2\deg(p_{m(p+1)+\ell,0})\leq 0$ (cf. \eqref{eq:ineqZnk}), it is straightforward to check that $(\{\sigma_{m(p+1)+\ell,0}\},\{P_{m(p+1)+\ell,1}\},l)$ is strongly admissible for every $l\in\mathbb{Z}$, in the sense of Definition 2 in \cite{BarCalLop}. As a consequence, by Corollary 3 in \cite{BarCalLop} we obtain that for every continuous function $f$ on $[a_{0},b_{0}]$,
\[
\lim_{m\rightarrow\infty}\int_{a_{0}}^{b_{0}}f(\tau)\,p_{m(p+1)+\ell,0}^{2}(\tau)\,\frac{d\sigma_{m(p+1)+\ell,0}(\tau)}{|P_{m(p+1)+\ell,1}
(\tau)|}=\frac{1}{\pi}\int_{a_{0}}^{b_{0}}\frac{f(\tau)\,d\tau}{\sqrt{(b_{0}-\tau)(\tau-a_{0})}},
\]
which is \eqref{convergtoequil} for $k=0$.

The basis of induction has been settled. Let us assume that \eqref{convergtoequil} holds for some $k-1, 0\leq k-1 \leq p-2$. We must prove that the same is true if $k-1$ is replaced by $k$.

From the definition we have
\[d|\nu_{m(p+1)+\ell,k}|(\tau) = \frac{|h_{m(p+1)+\ell,k}(\tau)| d|\sigma_{m(p+1)+\ell,k}|(\tau)}{|P_{m(p+1)+\ell,k-1}(\tau)P_{m(p+1)+\ell,k+1}(\tau)|}\]
where according to \eqref{varymeas:sigma}
\[
d|\sigma_{m(p+1)+\ell,k}|(\tau):=\begin{cases}
d\sigma_{k}^{*}(\tau), & \ell\leq k,\\
|\tau|\,d\sigma_{k}^{*}(\tau), & k<\ell.
\end{cases} \]
Since $\ell$ remains fixed this measure is one and the same for all $m$. On the other hand, as indicated in the first sentence of the proof, the induction hypothesis implies that \eqref{converghnk} takes place for $k$. In particular,
\[
\lim_{m\rightarrow\infty}|h_{m(p+1)+\ell,k}(\tau)|=\begin{cases}
\frac{|\tau|}{\sqrt{|(\tau-b_{k-1})(\tau-a_{k-1})|}} & \mathrm{if}\ \ell<k,\\[1em]
\frac{1}{\sqrt{|(\tau-b_{k-1})(\tau-a_{k-1})|}} & \mathrm{if}\ k\leq \ell,
\end{cases}
\]
uniformly on $[a_k,b_k]$.

Now, the zeros of the polynomials $P_{m(p+1)+\ell,k-1}P_{m(p+1)+\ell,k+1}$ are bounded away from $[a_{k},b_{k}]$ (the support of the measure $\sigma_{m(p+1)+\ell,k}$) and according to \eqref{asympZnk}
\[ \deg(P_{m(p+1)+\ell,k-1}) + \deg(P_{m(p+1)+\ell,k+1}) - 2 \deg(P_{m(p+1)+\ell,k-1}) = O(1), \qquad m \to \infty.
\]
Consequently,
$(\{|h_{m(p+1)+\ell,k}| |\sigma_{m(p+1)+\ell,k}|\},\{|P_{m(p+1)+\ell,k-1}P_{m(p+1)+\ell,k-1}|\},l)$ is strongly admissible for every $l\in\mathbb{Z}$, in the sense of Definition 2 in \cite{BarCalLop}. Therefore, by Corollary 3 in \cite{BarCalLop} we obtain \eqref{convergtoequil} for $k$ as needed.
\end{proof}

\begin{remark}
Since \eqref{convergtoequil} is valid for every fixed $\ell=0,\ldots,p,$ we have the weak limits
\[
p_{n,k}^{2}(\tau)\,d|\nu_{n,k}|(\tau)\xrightarrow[n\rightarrow\infty]{*} \frac{1}{\pi}\,\frac{d\tau}{\sqrt{(b_{k}-\tau)(\tau-a_{k})}}
\]
for each $k=0,\ldots,p-1$.
\end{remark}

\subsection{Preliminary analysis} \label{preliminary}

Throughout this section we assume that for all $k=0,\ldots,p-1$, the measure $\sigma_{k}^{*}$ has positive Radon-Nikodym derivative with respect to Lebesgue measure a.e. on $[a_{k},b_{k}]$. If $\{f_{n}\}_{n\in\widetilde{\Lambda}}$ is a sequence of analytic functions on an open domain $\Omega\subset\overline{\mathbb{C}}$, the notation
\[
\lim_{n\in\widetilde{\Lambda}} f_{n}(z)=F(z),\qquad z\in\Omega,
\]
will stand for the uniform convergence of $f_{n}$ to $F$ on each compact subset of $\Omega$. Recall that for a pair of integers $n\leq m$, the notation $[n:m]$ indicates the set of all integers $l$ satisfying $n\leq l\leq m$. Below we will continue using the notations $n=\lambda p(p+1)+\rho$ and \eqref{eq:decomp:rho}.

For a measurable function $f\geq 0$ defined on $[0,2\pi]$ such that $\log f\in L^{1}([0,2\pi],d\tau)$, let
\[
D(f;z):=\exp\left\{\frac{1}{4\pi}\int_{0}^{2\pi}\frac{e^{i\tau}+z}{e^{i\tau}-z}\,\log f(\tau)\,d\tau\right\},\qquad |z|\neq 1.
\]
If $w\geq 0$ is now a measurable function on an interval $[a,b]$ that satisfies the Szeg\H{o} condition
\[
\frac{\log w(t)}{\sqrt{(b-t)(t-a)}}\in L^{1}([a,b], dt),
\]
we denote with
\[
S(w;z):=\frac{1}{D(\widetilde{w};1/\Phi(z))}=D(\widetilde{w};\Phi(z)),
\]
the Szeg\H{o} function on $\overline{\mathbb{C}}\setminus[a,b]$ associated with $w$. In this formula, the function $\Phi=\Phi_{[a,b]}$ is the conformal mapping of $\overline{\mathbb{C}}\setminus[a,b]$ onto $\{|z|>1\}$ such that $\Phi(\infty)=\infty$ and $\Phi'(\infty)>0$, and $\widetilde{w}$ is the function on $[0,2\pi]$ given by
\[
\widetilde{w}(\theta)=w(l_{[a,b]}(\cos\theta)),
\]
where $l_{[a,b]}$ denotes the linear map that transforms $[-1,1]$ onto $[a,b]$, i.e.,
\[
l_{[a,b]}(x)=\frac{b-a}{2}\,x+\frac{b+a}{2}.
\]
A well-known property of the function $S(w;z)$ is that if $w$ is continuous at $x\in[a,b]$ and $w(x)>0$, then the limit
\begin{equation}\label{limbound:Szego}
\lim_{z\rightarrow x}|S(w;z)|^2=\frac{1}{w(x)}
\end{equation}
holds. This can be easily deduced for example from \cite[Theorem 1.2.4]{Ran}. With a slight abuse of notation, in this paper we will indicate \eqref{limbound:Szego} by writing $|S(w;x)|^{2}w(x)=1$ or an equivalent expression.

In the preliminary analysis that we will perform in this section, we fix $\rho$ in the expression $n=\lambda p(p+1)+\rho$, and let $\lambda$ tend to infinity along a certain subsequence. In particular, the quantities $\eta$ and $\ell$ in \eqref{eq:decomp:rho} will also remain fixed. The subsequence that we consider is a sequence $\Lambda\subset\mathbb{N}$ such that \eqref{eq:limratios} holds for each $k=0,\ldots,p-1$. Note that the functions $\widetilde{F}_{k}^{(\rho)}$ do not vanish in $\mathbb{C}\setminus [a_{k}, b_{k}]$. By convention, we define $\widetilde{F}_{-1}^{(\rho)} \equiv \widetilde{F}_{p}^{(\rho)}\equiv 1$.

The starting point of the analysis is the set of orthogonality conditions
\begin{align}
\int_{a_{k}}^{b_{k}} P_{n,k}(\tau)\,\tau^{s}\,d|\nu_{n,k}|(\tau) & =0,\qquad s=0,\ldots,Z(n,k)-1,\label{eq:orthogvaryPnk:1}\\
\int_{a_{k}}^{b_{k}} P_{n+1,k}(\tau)\,\tau^{s}\,d|\nu_{n+1,k}|(\tau) & =0,\qquad s=0,\ldots,Z(n+1,k)-1,\label{eq:orthogvaryPnk:2}
\end{align}
which follow from \eqref{orthog:lpnk}. Recall that the measures $|\nu_{n,k}|$, $|\nu_{n+1,k}|$ are given in \eqref{positivenunk}. Recall also that by convention $P_{n,-1} \equiv P_{n,p} \equiv 1$, see Definition \ref{Def:polyPnk}.

We subdivide the analysis into several cases, namely
\begin{itemize}
\item[Case 1)] $\ell(n)\in[0:k-2]$,
\item[Case 2)] $\ell(n)=k-1$,
\item[Case 3)] $\ell(n)=k$,
\item[Case 4)] $\ell(n)\in[k+1:p-1]$,
\item[Case 5)] $\ell(n)=p$.
\end{itemize}
Below we analyze these cases separately:

\smallskip

\noindent Case 1)  $\ell(n)\in[0:k-2]$. We have $\ell(n+1)=\ell(n)+1<k$; therefore, from \eqref{varymeas:sigma} we obtain that
\begin{equation}\label{relsigma}
d\sigma_{n,k}(\tau)=d\sigma_{n+1,k}(\tau)=d\sigma_{k}^{*}(\tau),
\end{equation}
and according to \eqref{positivenunk} we can write
\begin{equation}\label{eq:perturborthogmeas:1}
d|\nu_{n+1,k}|(\tau)=g_{n,k}(\tau)\,d|\nu_{n,k}|(\tau),
\end{equation}
where
\begin{equation}\label{def:gnk}
g_{n,k}(\tau):=\frac{|h_{n+1,k}(\tau)|}{|h_{n,k}(\tau)|}\frac{|P_{n,k-1}(\tau)|}{|P_{n+1,k-1}(\tau)|}\,\frac{|P_{n,k+1}(\tau)|}{|P_{n+1,k+1}(\tau)|}.
\end{equation}
Hence in \eqref{eq:perturborthogmeas:1} we have written the orthogonality measure in \eqref{eq:orthogvaryPnk:2} as a perturbation of the orthogonality measure in \eqref{eq:orthogvaryPnk:1}. If we now let $\lambda\rightarrow\infty$ along the sequence $\Lambda\subset\mathbb{N}$ and we keep $\rho$ fixed in $n=\lambda p(p+1)+\rho$, in virtue of \eqref{converghnk} and \eqref{eq:limratios} we have
\begin{equation}\label{eq:asympgnk:1}
\lim_{\lambda\in\Lambda} g_{n,k}(\tau)=\frac{1}{|\widetilde{F}_{k-1}^{(\rho)}(\tau)||\widetilde{F}_{k+1}^{(\rho)}(\tau)|},
\end{equation}
uniformly on $[a_{k},b_{k}]$.

Suppose now that $\eta(n)+\ell(n)\notin[k:p-1]$. Then, it follows from \eqref{eq:values:diffZnk:1} that $\deg(P_{n,k})=\deg(P_{n+1,k})$.  Applying Theorem 2 from \cite{BarCalLop} (result on relative asymptotics of polynomials orthogonal with respect to varying measures), from \eqref{eq:perturborthogmeas:1} and \eqref{eq:asympgnk:1} we deduce that
\begin{equation}\label{lim:Pnk:rel:1}
\lim_{\lambda\in\Lambda}\frac{P_{n+1,k}(z)}{P_{n,k}(z)}=\Fkr(z)=\frac{\Skr(z)}{\Skr(\infty)},\qquad z\in\overline{\mathbb{C}}\setminus[a_{k},b_{k}],
\end{equation}
where $S_{k}^{(\rho)}$ is the Szeg\H{o} function on $\overline{\mathbb{C}}\setminus[a_{k},b_{k}]$ associated with the weight $(|\Fkmr(\tau)||\Fkpr(\tau)|)^{-1}$, $\tau\in[a_{k},b_{k}]$. Due to \eqref{limbound:Szego} and \eqref{lim:Pnk:rel:1}, taking limit as $z\to \tau, \tau \in [a_k,b_k]$, we obtain
\begin{equation}\label{ecuac1}
 \frac{|\Fkr(\tau)|^2}{|\widetilde{F}_{k-1}^{(\rho)}(\tau)||\widetilde{F}_{k+1}^{(\rho)}(\tau)|}=
\frac{|\Skr(\tau)|^2}{|\Skr(\infty)|^2|\widetilde{F}_{k-1}^{(\rho)}(\tau)||\widetilde{F}_{k+1}^{(\rho)}(\tau)|} = \frac{1}{w_k^{(\rho)}},\qquad \tau\in [a_{k},b_{k}],
\end{equation}
where $w_k^{(\rho)} = |\Skr(\infty)|^2 > 0$.

If we assume that $\eta(n)+\ell(n)\in[k:p-1]$, then we have $\deg(P_{n+1,k})=\deg(P_{n,k})+1$, cf. \eqref{eq:values:diffZnk:1}. In order to analyze the ratio $P_{n+1,k}/P_{n,k}$ in this case,  we introduce an auxiliary polynomial $P_{n,k}^{*}$, defined as the monic polynomial of degree $\deg(P_{n+1,k})=\deg(P_{n,k})+1$ that is orthogonal with respect to the measure $d|\nu_{n,k}|(\tau)$.

Then, by Theorem 1 from \cite{BarCalLop} (result on ratio asymptotics of polynomials orthogonal with respect to varying measures), we obtain that
\[
\lim_{\lambda\in\Lambda}\frac{P_{n,k}^{*}(z)}{P_{n,k}(z)}=\frac{\phi_{k}(z)}{\phi_{k}'(\infty)},\qquad z\in\mathbb{C}\setminus[a_{k},b_{k}],
\]
where $\phi_{k}$ denotes the conformal mapping from $\overline{\mathbb{C}}\setminus[a_{k},b_{k}]$ onto the exterior of the unit circle which satisfies $\phi_{k}(\infty)=\infty$ and $\phi_{k}'(\infty)>0$. Now $P_{n,k}^{*}$ and $P_{n+1,k}$ have the same degree, the first polynomial is orthogonal with respect to $|\nu_{n,k}|$ and the second one is orthogonal with respect to $g_{n,k}\, d|\nu_{n,k}|$, so by the relative asymptotic result mentioned above we have
\[
\lim_{\lambda\in\Lambda}\frac{P_{n+1,k}(z)}{P_{n,k}^{*}(z)}=\frac{S_{k}^{(\rho)}(z)}{S_{k}^{(\rho)}(\infty)},\qquad z\in\overline{\mathbb{C}}\setminus[a_{k},b_{k}],
\]
where $S_{k}^{(\rho)}$ is again the Szeg\H{o} function on $\overline{\mathbb{C}}\setminus[a_{k},b_{k}]$ associated with the weight $(|\widetilde{F}_{k-1}^{(\rho)}(\tau)||\widetilde{F}_{k+1}^{(\rho)}(\tau)|)^{-1}$, $\tau\in[a_{k},b_{k}]$. So it follows that if $\eta(n)+\ell(n)\in[k:p-1]$ then
\begin{equation}\label{lim:Pnk:ratio:rel:1}
\lim_{\lambda\in\Lambda}\frac{P_{n+1,k}(z)}{P_{n,k}(z)}
=\widetilde{F}_{k}^{(\rho)}(z)=\frac{S_{k}^{(\rho)}(z)}{S_{k}^{(\rho)}(\infty)}\,\frac{\phi_{k}(z)}{\phi_{k}'(\infty)},\qquad z\in\mathbb{C}\setminus[a_{k},b_{k}].
\end{equation}
Taking account of \eqref{limbound:Szego} and \eqref{lim:Pnk:ratio:rel:1}, we conclude that
\begin{equation}\label{ecuac2}
 \frac{|\Fkr(\tau)|^2}{|\widetilde{F}_{k-1}^{(\rho)}(\tau)||\widetilde{F}_{k+1}^{(\rho)}(\tau)|}=
\frac{|\Skr(\tau)|^2}{|\Skr(\infty)\phi_k^\prime(\infty)|^2|\widetilde{F}_{k-1}^{(\rho)}(\tau)||\widetilde{F}_{k+1}^{(\rho)}(\tau)|} = \frac{1}{w_k^{(\rho)}},\qquad \tau\in [a_{k},b_{k}],
\end{equation}
with $w_k^{(\rho)} = |\Skr(\infty)\phi_k^\prime(\infty)|^2 > 0$.
This finishes the analysis of Case 1). Since Cases 4) and 1) are rather similar, we now analyze

\smallskip

\noindent Case 4)  $\ell(n)\in[k+1:p-1]$. We have $\ell(n+1)=\ell(n)+1$ and so $\ell(n+1)>k$; therefore, we deduce from \eqref{varymeas:sigma} that
\[
d\sigma_{n,k}(\tau)=d\sigma_{n+1,k}(\tau)=\tau d\sigma_{k}^{*}(\tau),
\]
and this implies that \eqref{eq:perturborthogmeas:1}--\eqref{def:gnk} hold. Applying \eqref{eq:limratios} and \eqref{converghnk} we also obtain \eqref{eq:asympgnk:1}, uniformly on $[a_{k},b_{k}]$.

Assume that $\eta(n)+\ell(n)\notin[p:p+k-1]$. Then, it follows from \eqref{eq:values:diffZnk:1} that $\deg(P_{n,k})=\deg(P_{n+1,k})$. Applying Theorem 2 from \cite{BarCalLop} as in Case 1), from \eqref{eq:perturborthogmeas:1} and \eqref{eq:asympgnk:1} we obtain
\begin{equation}\label{lim:Pnk:rel:2}
\lim_{\lambda\in\Lambda}\frac{P_{n+1,k}(z)}{P_{n,k}(z)}=\widetilde{F}_{k}^{(\rho)}(z)
=\frac{S_{k}^{(\rho)}(z)}{S_{k}^{(\rho)}(\infty)},\qquad z\in\overline{\mathbb{C}}\setminus[a_{k},b_{k}],
\end{equation}
where $S_{k}^{(\rho)}$ is the Szeg\H{o} function on $\overline{\mathbb{C}}\setminus[a_{k},b_{k}]$ associated with the weight $(|\widetilde{F}_{k-1}^{(\rho)}(\tau)||\widetilde{F}_{k+1}^{(\rho)}(\tau)|)^{-1}$, $\tau\in[a_{k},b_{k}]$. In this situation, \eqref{limbound:Szego} and \eqref{lim:Pnk:rel:2} imply
\begin{equation}\label{ecuac3}
 \frac{|\Fkr(\tau)|^2}{|\widetilde{F}_{k-1}^{(\rho)}(\tau)||\widetilde{F}_{k+1}^{(\rho)}(\tau)|}=
\frac{|\Skr(\tau)|^2}{|\Skr(\infty)|^2|\widetilde{F}_{k-1}^{(\rho)}(\tau)||\widetilde{F}_{k+1}^{(\rho)}(\tau)|} = \frac{1}{w_k^{(\rho)}},\qquad \tau\in [a_{k},b_{k}],
\end{equation}
where $w_k^{(\rho)} = |\Skr(\infty)|^2 > 0$ (the same as in \eqref{ecuac1}).

If $\eta(n)+\ell(n)\in[p:p+k-1]$, then according to \eqref{eq:values:diffZnk:1} we have $\deg(P_{n+1,k})=\deg(P_{n,k})-1$, i.e., $P_{n+1,k}$ is of degree one unit less than the degree of $P_{n,k}$. Arguing as in Case 1) with the help of an auxiliary polynomial $P_{n,k}^{*}$ (of degree $\deg(P_{n+1,k})=\deg(P_{n,k})-1$ and orthogonal with respect to $|\nu_{n,k}|$), we obtain that in this case
\begin{equation}\label{lim:Pnk:ratio:rel:2}
\lim_{\lambda\in\Lambda}\frac{P_{n+1,k}(z)}{P_{n,k}(z)}
=\widetilde{F}_{k}^{(\rho)}(z)=\frac{S_{k}^{(\rho)}(z)}{S_{k}^{(\rho)}(\infty)}\,\frac{\phi_{k}'(\infty)}{\phi_{k}(z)},\qquad z\in\overline{\mathbb{C}}\setminus[a_{k},b_{k}],
\end{equation}
where $\phi_{k}$ is exactly as before and $\Skr$ is again the  Szeg\H{o} function associated with the weight $(|\widetilde{F}_{k-1}^{(\rho)}(\tau)||\widetilde{F}_{k+1}^{(\rho)}(\tau)|)^{-1}$, $\tau\in[a_{k},b_{k}]$.
Now, \eqref{limbound:Szego} and \eqref{lim:Pnk:ratio:rel:2} imply
\begin{equation}\label{ecuac4}
 \frac{|\Fkr(\tau)|^2}{|\widetilde{F}_{k-1}^{(\rho)}(\tau)||\widetilde{F}_{k+1}^{(\rho)}(\tau)|}=
\frac{|\Skr(\tau)\phi_k^{\prime}(\infty)|^2}{|\Skr(\infty)|^2|\widetilde{F}_{k-1}^{(\rho)}(\tau)||\widetilde{F}_{k+1}^{(\rho)}(\tau)|} = \frac{1}{w_k^{(\rho)}},\qquad \tau\in [a_{k},b_{k}],
\end{equation}
where $w_k^{(\rho)} = |\Skr(\infty)/\phi_k^{\prime}(\infty)|^2 > 0$.
This concludes the analysis of Case $4)$.

\smallskip

\noindent Case 2) $\ell(n)=k-1$. Here, $\ell(n+1)=\ell(n)+1=k$, hence \eqref{relsigma} and \eqref{eq:perturborthogmeas:1}--\eqref{def:gnk} hold. However, in this case it is convenient to write
\[
g_{n,k}(\tau)=\frac{1}{|\tau|}\,\widetilde{g}_{n,k}(\tau),\qquad \widetilde{g}_{n,k}(\tau):=|\tau|\frac{|h_{n+1,k}(\tau)|}{|h_{n,k}(\tau)|}\frac{|P_{n,k-1}(\tau)|}{|P_{n+1,k-1}(\tau)|}\,\frac{|P_{n,k+1}(\tau)|}{|P_{n+1,k+1}(\tau)|}.
\]
From \eqref{intrep:hnk}, \eqref{eq:limratios}, and \eqref{converghnk} we obtain
\begin{equation}
\label{eq:asympgnk:3}
\lim_{\lambda\in\Lambda}\widetilde{g}_{n,k}(\tau)=\frac{1}{|\Fkmr(\tau)||\Fkpr(\tau)|},
\end{equation}
uniformly on $[a_{k},b_{k}]$.

In view of this asymptotic behavior of $\widetilde{g}_{n,k}$, using the auxiliary asymptotic results from \cite{BarCalLop} we conclude that if $\eta(n)+\ell(n)\notin[k:p-1]$, then
\begin{equation}\label{lim:Pnk:rel:3}
\lim_{\lambda\in\Lambda}\frac{P_{n+1,k}(z)}{P_{n,k}(z)}=\widetilde{F}_{k}^{(\rho)}(z)
=\frac{S_{k}^{(\rho)}(z)}{S_{k}^{(\rho)}(\infty)},\qquad z\in\overline{\mathbb{C}}\setminus[a_{k},b_{k}],
\end{equation}
where $S_{k}^{(\rho)}$ is the Szeg\H{o} function on $\overline{\mathbb{C}}\setminus[a_{k},b_{k}]$ associated with the weight $(|\tau||\widetilde{F}_{k-1}^{(\rho)}(\tau)||\widetilde{F}_{k+1}^{(\rho)}(\tau)|)^{-1}$, $\tau\in[a_{k},b_{k}]$, and if $\eta(n)+\ell(n)\in[k:p-1]$, then
\begin{equation}\label{lim:Pnk:ratio:rel:3}
\lim_{\lambda\in\Lambda}\frac{P_{n+1,k}(z)}{P_{n,k}(z)}
=\widetilde{F}_{k}^{(\rho)}(z)=\frac{S_{k}^{(\rho)}(z)}{S_{k}^{(\rho)}(\infty)}\,\frac{\phi_{k}(z)}{\phi_{k}'(\infty)},\qquad z\in\overline{\mathbb{C}}\setminus[a_{k},b_{k}],
\end{equation}
with the same definition of $\Skr$ as in \eqref{lim:Pnk:rel:3}. From \eqref{limbound:Szego} and \eqref{lim:Pnk:rel:3}--\eqref{lim:Pnk:ratio:rel:3}, we conclude that
\begin{equation}\label{ecuac5}
 \frac{|\Fkr(\tau)|^2}{|\tau||\widetilde{F}_{k-1}^{(\rho)}(\tau)||\widetilde{F}_{k+1}^{(\rho)}(\tau)|}=\frac{1}{w_k^{(\rho)}},\qquad \tau\in [a_{k},b_{k}] \setminus \{0\},
\end{equation}
where
\[
w_k^{(\rho)}=\begin{cases}|\Skr(\infty)|^2>0, & \ell(n)=k-1,\quad \eta(n)+\ell(n)\notin[k:p-1],\\[0.5em]
|\Skr(\infty)\phi_k^{\prime}(\infty)|^2>0, & \ell(n)=k-1,\quad \eta(n)+\ell(n)\in[k:p-1].
\end{cases}
\]

\smallskip

\noindent Case 3) $\ell(n)=k$. Since $0\leq k\leq p-1$, now we have $\ell(n+1)=\ell(n)+1>k$, and so
\begin{align*}
d\sigma_{n,k}(\tau) & =d\sigma_{k}^{*}(\tau),\\
d\sigma_{n+1,k}(\tau) & =\tau\,d\sigma_{k}^{*}(\tau).
\end{align*}
Then,
\[
d|\nu_{n+1,k}|(\tau)=g_{n,k}(\tau)\,|\tau|\,d|\nu_{n,k}|(\tau),
\]
with $g_{n,k}$ as in \eqref{def:gnk}, and we also have
\[
\lim_{\lambda\in\Lambda} g_{n,k}(\tau)\,|\tau|=\frac{|\tau|}{|\Fkmr(\tau)||\Fkpr(\tau)|},
\]
uniformly on $[a_{k},b_{k}]$.

If $\eta(n)+\ell(n)\notin[p:p+k-1]$, then $\deg(P_{n+1,k})=\deg(P_{n,k})$, and as in Case 1), we obtain
\begin{equation}\label{lim:Pnk:rel:4}
\lim_{\lambda\in\Lambda}\frac{P_{n+1,k}(z)}{P_{n,k}(z)}
=\widetilde{F}_{k}^{(\rho)}(z)=\frac{S_{k}^{(\rho)}(z)}{S_{k}^{(\rho)}(\infty)},\qquad z\in\overline{\mathbb{C}}\setminus[a_{k},b_{k}],
\end{equation}
where $S_{k}^{(\rho)}$ is the Szeg\H{o} function on $\overline{\mathbb{C}}\setminus[a_{k},b_{k}]$ associated with the weight $|\tau|(|\widetilde{F}_{k-1}^{(\rho)}(\tau)||\widetilde{F}_{k+1}^{(\rho)}(\tau)|)^{-1}$, $\tau\in[a_{k},b_{k}]$.

If $\eta(n)+\ell(n)\in [p:p+k-1]$ then $\deg(P_{n,k+1})=\deg(P_{n,k})-1$, and as in Case 4) we obtain the asymptotic formula
\begin{equation}\label{lim:Pnk:ratio:rel:4}
\lim_{\lambda\in\Lambda}\frac{P_{n+1,k}(z)}{P_{n,k}(z)}
=\widetilde{F}_{k}^{(\rho)}(z)=\frac{S_{k}^{(\rho)}(z)}{S_{k}^{(\rho)}(\infty)}\,\frac{\phi_{k}'(\infty)}{\phi_{k}(z)},\qquad z\in\overline{\mathbb{C}}\setminus[a_{k},b_{k}],
\end{equation}
where $\Skr$ is defined as in \eqref{lim:Pnk:rel:4}. From \eqref{limbound:Szego}, \eqref{lim:Pnk:rel:4}, and \eqref{lim:Pnk:ratio:rel:4}, if follows that
\begin{equation}\label{ecuac6}
 \frac{|\tau||\Fkr(\tau)|^2}{|\widetilde{F}_{k-1}^{(\rho)}(\tau)||\widetilde{F}_{k+1}^{(\rho)}(\tau)|}  = \frac{1}{w_k^{(\rho)}},\qquad \tau\in [a_{k},b_{k}] \setminus \{0\},
\end{equation}
where
\[
w_k^{(\rho)} =\begin{cases}
|\Skr(\infty)|^2 > 0, & \ell(n) = k,\quad \eta(n)+\ell(n) \notin [p:p+k-1], \\[0.5em]
|\Skr(\infty)/\phi_k^{\prime}(\infty)|^2 > 0, & \ell(n) = k, \quad \eta(n)+\ell(n) \in [p:p+k-1].
\end{cases}
\]

\smallskip

\noindent Case 5)  $\ell(n)=p$. In particular we have $\ell>k$ and $k\geq \ell(n+1)=0$. Hence in this case
\begin{align*}
d\sigma_{n,k}(\tau) & =\tau\,d\sigma_{k}^{*}(\tau),\\
d\sigma_{n+1,k}(\tau) & =d\sigma_{k}^{*}(\tau),
\end{align*}
and so
\[
d|\nu_{n+1,k}|(\tau)=\frac{g_{n,k}(\tau)}{|\tau|}\,d|\nu_{n,k}|(\tau),
\]
where $g_{n,k}$ is again given by \eqref{def:gnk}. In virtue of \eqref{converghnk} we have
\begin{equation}\label{lim:hnk:new}
\lim_{\lambda\in\Lambda}\frac{|h_{n+1,k}(\tau)|}{|h_{n,k}(\tau)|}=\begin{cases}
|\tau| & \mbox{if}\,\,k\geq 1,\\[0.5em]
1 & \mbox{if}\,\,k=0,
\end{cases}
\end{equation}
uniformly on $[a_{k},b_{k}]$. According to \eqref{eq:values:diffZnk:2} we have
\[
\deg(P_{n+1,k})-\deg(P_{n,k})=\begin{cases}
0 & \mbox{if}\,\,\eta(n)\in[0:k-1],\\[0.5em]
1 & \mbox{if}\,\,\eta(n)\in [k:p-1].
\end{cases}
\]
If we assume that $\eta(n)\in[0:k-1]$, then in particular $k\geq 1$ and from \eqref{eq:limratios} and \eqref{lim:hnk:new} we get
\[
\lim_{\lambda\in\Lambda}\frac{g_{n,k}(\tau)}{|\tau|}=\frac{1}{|\widetilde{F}_{k-1}^{(\rho)}(\tau)||\widetilde{F}_{k+1}^{(\rho)}(\tau)|},\qquad \tau\in[a_{k},b_{k}],
\]
and since $\deg(P_{n+1,k})=\deg(P_{n,k})$, we deduce that the asymptotic formula \eqref{lim:Pnk:rel:1} holds with $S_{k}^{(\rho)}$ defined in the same way as in that formula.

Suppose now that $\eta(n)\in[k:p-1]$. Then we have
\begin{equation}\label{eq:asympgnk:2}
\lim_{\lambda\in\Lambda}\frac{g_{n,k}(\tau)}{|\tau|}=\begin{cases}\frac{1}{|\tau||\widetilde{F}_{1}^{(\rho)}(\tau)|} & \mbox{if}\,\,k=0,\\[0.5em]
\frac{1}{|\widetilde{F}_{k-1}^{(\rho)}(\tau)||\widetilde{F}_{k+1}^{(\rho)}(\tau)|} & \mbox{if}\,\,k\geq 1,
\end{cases}\qquad \tau\in[a_{k},b_{k}],
\end{equation}
and $\deg(P_{n+1,k})=1+\deg(P_{n,k})$. Arguing as before we obtain that the asymptotic formula \eqref{lim:Pnk:ratio:rel:1} is valid with $S_{k}^{(\rho)}(z)$ being now the Szeg\H{o} function associated with the weight indicated on the right-hand side of \eqref{eq:asympgnk:2}. On account of \eqref{limbound:Szego} and the structure of $\widetilde{F}_k^{(\rho)}$ for the different values of $k$, now we have
\begin{equation}
\label{ecuac7}
\begin{array}{ll}
\displaystyle{\frac{|\widetilde{F}_{0}^{(\rho)}(\tau)|^{2}}{|\tau||\widetilde{F}_{1}^{(\rho)}(\tau)|} =\frac{1}{w_0^{(\rho)}}}, & \tau\in [a_{0},b_{0}]  \setminus \{0\},\\
\displaystyle{\frac{|\widetilde{F}_{k}^{(\rho)}(\tau)|^{2}}{|\widetilde{F}_{k-1}^{(\rho)}(\tau)||\widetilde{F}_{k+1}^{(\rho)}(\tau)|} =\frac{1}{w_k^{(\rho)}}},&  \tau\in[a_{k},b_{k}], \,\,\,\, k\in [1:p-1],
\end{array}
\end{equation}
where $w_k^{(\rho)} = |S_k^{(\rho)}(\infty)|^2, \eta(n) + 1 \leq k \leq p-1$ and $w_k^{(\rho)} = |S_k^{(\rho)}(\infty)\phi_k^{\prime}(\infty)|^2, 0 \leq k \leq \eta(n)$.

\subsection{Boundary value problem for the functions $\Fkr$}

The preliminary analysis carried out in the previous section leads to the following boundary value relations between the functions $\Fkr$.

\begin{lemma}\label{boundary}
Let $\rho\in[0:p(p+1)-1]$ be fixed, and let $\ell\in[0:p]$ be the remainder in the division of $\rho$ by $p+1$. Let $\Fkr$, $0\leq k\leq p-1$, be any collection of functions obtained through the asymptotic formula \eqref{eq:limratios} for some subsequence $\Lambda\subset\mathbb{N}$. Then there exist positive constants $c_{k}^{(\rho)}$ so that the collection of functions $F_{k}^{(\rho)}(z)=c_{k}^{(\rho)} \Fkr(z), 0\leq k \leq p-1$ satisfies the following systems of boundary value equations:
\begin{itemize}
\item[1)] When $\ell \in [0:p-1]$ (here, $[0:-1], [p:p-1]$ and $[p+1:p-1]$ denote the empty set for the corresponding values of $\ell$)
\[
\begin{array}{lll}
\displaystyle{\frac{|\Fk(\tau)|^{2}}{|\Fkm(\tau)||\Fkp(\tau)|}=1}, & \tau\in[a_{k},b_{k}], & k \in [0:\ell -1] \cup [\ell +2:p-1], \\
\displaystyle{\frac{|\Fk(\tau)|^{2}\,|\tau|}{|\Fkm(\tau)||\Fkp(\tau)|}=1}, & \tau\in[a_{k},b_{k}] \setminus \{0\}, & k = \ell, \\
\displaystyle{\frac{|\Fk(\tau)|^{2}}{|\tau||\Fkm(\tau)||\Fkp(\tau)|}=1}, & \tau\in[a_{k},b_{k}] \setminus \{0\}, & k = \ell +1.
\end{array}
\]
(The last equation is dropped if $\ell = p-1$.)
\item[2)] For $\ell=p$, the system is
\[
\begin{array}{ll}
\displaystyle{\frac{|F_{0}^{(\rho)}(\tau)|^{2}}{|\tau||F_{1}^{(\rho)}(\tau)|} =1}, & \tau\in [a_{0},b_{0}] \setminus \{0\},\\
\displaystyle{\frac{|\Fk(\tau)|^{2}}{|\Fkm(\tau)||\Fkp(\tau)|} =1},&  \tau\in[a_{k},b_{k}],\quad k\in [1:p-1].
\end{array}
\]
\end{itemize}
Moreover, for each $\rho$ fixed, the functions $F_{k}^{(\rho)}(z), 0\leq k \leq p-1$ satisfy:
\begin{itemize}
\item[i)] $(F_k^{(\rho)})^{\pm 1} \in \mathcal{H}(\mathbb{C}\setminus [a_k,b_k])$.
\item[ii)] The leading coefficient (corresponding to the highest power of $z$) of the Laurent expansion of $F_k^{(\rho)}$  at $\infty$ is positive.
\item[iii)] $F_k^{(\rho)}$ either has a simple pole, a simple zero, or takes a finite positive value at $\infty$. For a given $\rho \in [0:p(p+1)-1]$ and $k \in [0:p-1]$, only one of these situations occur independently of $\Lambda$.
 \end{itemize}
\end{lemma}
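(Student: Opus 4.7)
The plan is to combine the boundary value identities already derived in the preliminary analysis of Section~\ref{preliminary} with a single normalization step that absorbs the proportionality constants appearing there. In Cases~1)--5) of Section~\ref{preliminary}, for each $k\in[0:p-1]$ and each fixed $\rho$, a boundary value relation of one of the three shapes
\[
\frac{|\widetilde{F}_k^{(\rho)}(\tau)|^{2}}{|\widetilde{F}_{k-1}^{(\rho)}(\tau)||\widetilde{F}_{k+1}^{(\rho)}(\tau)|}=\frac{1}{w_k^{(\rho)}},\quad
\frac{|\tau|\,|\widetilde{F}_k^{(\rho)}(\tau)|^{2}}{|\widetilde{F}_{k-1}^{(\rho)}(\tau)||\widetilde{F}_{k+1}^{(\rho)}(\tau)|}=\frac{1}{w_k^{(\rho)}},\quad
\frac{|\widetilde{F}_k^{(\rho)}(\tau)|^{2}}{|\tau|\,|\widetilde{F}_{k-1}^{(\rho)}(\tau)||\widetilde{F}_{k+1}^{(\rho)}(\tau)|}=\frac{1}{w_k^{(\rho)}}
\]
has already been established on $[a_k,b_k]$ with an explicit constant $w_k^{(\rho)}>0$. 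Since $\ell(n)=\ell$ and $\eta(n)=\eta$ depend only on $\rho$ when $n=\lambda p(p+1)+\rho$, the assignment of Cases~1)--5) is governed by $\rho$ (and $k$) alone. The first step is therefore to organize these identities by the value of $\ell\in[0:p]$ and verify that, after stripping the positive factors $w_k^{(\rho)}$, they agree with the dichotomy stated in parts 1) and 2) of the lemma.

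Next I would carry out the normalization. Setting $F_k^{(\rho)}:=c_k^{(\rho)}\widetilde{F}_k^{(\rho)}$ with $c_{-1}^{(\rho)}=c_p^{(\rho)}=1$ (consistent with $\widetilde{F}_{-1}^{(\rho)}\equiv\widetilde{F}_p^{(\rho)}\equiv 1$), direct substitution into each of the three shapes above shows that the normalized boundary relations in the lemma reduce, uniformly across all cases, to the single scalar system
\[
(c_k^{(\rho)})^{2}=w_k^{(\rho)}\,c_{k-1}^{(\rho)}\,c_{k+1}^{(\rho)},\qquad k\in[0:p-1].
\]
Taking logarithms $y_k:=\log c_k^{(\rho)}$ converts this into the discrete Dirichlet problem $2y_k-y_{k-1}-y_{k+1}=\log w_k^{(\rho)}$ with boundary values $y_{-1}=y_p=0$, whose $p\times p$ coefficient matrix is the symmetric positive definite tridiagonal Laplacian; the system therefore has a unique real solution, giving a unique $p$-tuple of strictly positive $c_k^{(\rho)}$. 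This is the system \eqref{eq:cs} referenced in Corollary~\ref{cor:ratioQnan}.

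The last step is to verify properties i)--iii) by reading off the three possible explicit forms of $\widetilde{F}_k^{(\rho)}$ exhibited in Section~\ref{preliminary}: either $\widetilde{F}_k^{(\rho)}=S_k^{(\rho)}/S_k^{(\rho)}(\infty)$; or the same expression multiplied by $\phi_k/\phi_k'(\infty)$ when $\deg P_{n+1,k}=\deg P_{n,k}+1$; or by $\phi_k'(\infty)/\phi_k$ when $\deg P_{n+1,k}=\deg P_{n,k}-1$. Since Szeg\H{o} functions and the conformal map $\phi_k$ are non-vanishing and analytic on $\mathbb{C}\setminus[a_k,b_k]$, property i) holds for $\widetilde{F}_k^{(\rho)}$ and is preserved under multiplication by the positive constant $c_k^{(\rho)}$. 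In each of the three cases the leading coefficient at infinity of $\widetilde{F}_k^{(\rho)}$ equals $1$ (using $S_k^{(\rho)}(z)/S_k^{(\rho)}(\infty)\to 1$ together with $\phi_k(z)\sim\phi_k'(\infty)z$ and hence $\phi_k'(\infty)/\phi_k(z)\sim 1/z$ at infinity), so after scaling it becomes $c_k^{(\rho)}>0$, giving ii). For iii), which of the three behaviors (finite positive value, simple pole, or simple zero) at infinity occurs is dictated by $\deg P_{n+1,k}-\deg P_{n,k}$, and by Lemma~\ref{lem:valuesdiffZnk} this degree difference is periodic in $n$ with period $p(p+1)$ and so is determined by $\rho$ and $k$ alone, independently of the subsequence $\Lambda$. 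I expect the main obstacle to be the bookkeeping required to check that the five cases of Section~\ref{preliminary}, each with its own $|\tau|$-pattern, aggregate into the single system $(c_k^{(\rho)})^{2}=w_k^{(\rho)} c_{k-1}^{(\rho)}c_{k+1}^{(\rho)}$.
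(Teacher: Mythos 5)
Your proposal is correct and follows essentially the same route as the paper: collect the boundary relations \eqref{ecuac1}--\eqref{ecuac7} from the preliminary analysis, normalize by constants $c_k^{(\rho)}$ solving the log-linear system \eqref{eq:cs}, and read off properties i)--iii) from the explicit Szeg\H{o}-function/$\phi_k$ representations together with the fact that the case assignment depends only on $\ell$ and $\eta$, hence only on $\rho$. The only cosmetic difference is that the paper derives the non-vanishing in i) via Hurwitz's theorem applied to the limit of the polynomial ratios, while you read it off from the explicit representations; both are valid.
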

\begin{proof} Let $\Fkr$, $0\leq k\leq p-1$, be any collection of functions obtained through the asymptotic formula \eqref{eq:limratios} for some subsequence $\Lambda\subset\mathbb{N}$.
From \eqref{ecuac1}, \eqref{ecuac2}, \eqref{ecuac3}, \eqref{ecuac4}, \eqref{ecuac5}, \eqref{ecuac6}, and \eqref{ecuac7}, we obtain the following systems of boundary value problems for each fixed $\rho$ making $k$ range from $0$ to $p-1$.
\begin{itemize}
\item[1)] If $\ell \in [0:p-1]$,
\[
\begin{array}{lll}
\displaystyle{\frac{|\widetilde{F}_{k}^{(\rho)}(\tau)|^{2}}{|\widetilde{F}_{k-1}^{(\rho)}(\tau)||\widetilde{F}_{k+1}^{(\rho)}(\tau)|}=\frac{1}{w_k^{(\rho)}}}, & \tau\in[a_{k},b_{k}], & k \in [0:\ell -1] \cup [\ell +2:p-1], \\
\displaystyle{\frac{|\widetilde{F}_{k}^{(\rho)}(\tau)|^{2}\,|\tau|}{|\widetilde{F}_{k-1}^{(\rho)}(\tau)||\widetilde{F}_{k+1}^{(\rho)}(\tau)|}=\frac{1}{w_k^{(\rho)}}}, & \tau\in[a_{k},b_{k}] \setminus \{0\}, & k = \ell, \\
\displaystyle{\frac{|\widetilde{F}_{k}^{(\rho)}(\tau)|^{2}}{|\tau||\widetilde{F}_{k-1}^{(\rho)}(\tau)||\widetilde{F}_{k+1}^{(\rho)}(\tau)|}=\frac{1}{w_k^{(\rho)}}}, & \tau\in[a_{k},b_{k}] \setminus \{0\}, & k = \ell +1.
\end{array}
\]
\item[2)] For $\ell=p$, the system is
\[
\begin{array}{ll}
\displaystyle{\frac{|\widetilde{F}_{0}^{(\rho)}(\tau)|^{2}}{|\tau||\widetilde{F}_{1}^{(\rho)}(\tau)|} =\frac{1}{w_0^{(\rho)}}}, & \tau\in [a_{0},b_{0}]  \setminus \{0\},\\
\displaystyle{\frac{|\widetilde{F}_{k}^{(\rho)}(\tau)|^{2}}{|\widetilde{F}_{k-1}^{(\rho)}(\tau)||\widetilde{F}_{k+1}^{(\rho)}(\tau)|} =\frac{1}{w_k^{(\rho)}}},&  \tau\in[a_{k},b_{k}],\,\,\,\,  k\in [1:p-1] .
\end{array}
\]
\end{itemize}
The values $w_k^{(\rho)}, 0\leq k\leq p-1$, which depend on $\rho$ (i.e., on $\ell$ and $\eta$) were specified in the preliminary analysis. They are all positive.

Set $F_{k}^{(\rho)}(z)=c_{k}^{(\rho)} \Fkr(z), c_k^{(\rho)} > 0$ and substitute in the previous systems. Our problem reduces to determining if there exist positive constants $c_{k}^{(\rho)}, k=0,\ldots,p-1\,\, (c_{-1}^{(\rho)} = c_{p}^{(\rho)} =1 )$ such that
\[\frac{(c_{k}^{(\rho)})^2}{c_{k-1}^{(\rho)}c_{k+1}^{(\rho)}w_k^{(\rho)}} = 1, \qquad k=0,\ldots,p-1.\]
Taking logarithm, this is equivalent to determining if the nonhomogeneous linear system of equations on $\log c_k^{(\rho)}$ is determinate
\begin{equation}
\label{eq:cs}
 2 \log c_k^{(\rho)} - \log c_{k-1}^{(\rho)} - \log c_{k+1}^{(\rho)} = \log w_k^{(\rho)}, \qquad k=0,\ldots,p-1.
\end{equation}
Of course this is the case.

Since ${F}_k^{(\rho)} = c_k^{(\rho)}\widetilde{F}_k^{(\rho)}, c_k^{(\rho)} > 0$, the properties i)-iii) are inherited from analogous ones for  $\widetilde{F}_k^{(\rho)}, 0\leq k\leq p-1$. Therefore, it is sufficient to verify them for the $\widetilde{F}_k^{(\rho)}$. First, $\widetilde{F}_k^{(\rho)}  \in \mathcal{H}(\mathbb{C}\setminus [a_k,b_k])$ because it is the uniform limit on compact subsets of $\mathbb{C}\setminus [a_k,b_k]$ of holomorphic functions. Since the zeros of these  functions all lie in $[a_k,b_k]$, by  Hurwitz' theorem $\widetilde{F}_k^{(\rho)}$ has no zero in $\mathbb{C}\setminus [a_k,b_k]$; therefore, its reciprocal is also in $\mathcal{H}(\mathbb{C}\setminus [a_k,b_k])$. Regarding ii) and iii),  notice that from the preliminary analysis it follows that $\widetilde{F}_k^{(\rho)}$ is a normalized Szeg\H{o} function in the complement of $[a_k,b_k]$, or a normalized Szeg\H{o} function multiplied or divided by $\phi_k$ which has a simple pole at $\infty$. In each case the normalization is taken so that the leading coefficient of the Laurent expansion of $\widetilde{F}_k^{(\rho)}$ at $\infty$ is equal to $1$. Finally, as we have seen, the existence of a pole, a zero, or a finite value of $\widetilde{F}_k^{(\rho)}$ at $\infty$ only depends on $\ell(n)$ and $\eta(n)$ in the decomposition \eqref{eq:decomp:rho} of $\rho$. Since $\rho$ is fixed so are $\ell(n)$ and $\eta(n)$ on the set of indices $\Lambda_\rho = \{n: n = \lambda p(p+1) + \rho, \lambda \in \mathbb{Z}_+ \}.$ We have concluded the proof.
\end{proof}

\begin{lemma}\label{unique}
Let $0\leq \rho\leq p(p+1)-1$ be fixed, and let $\widetilde{F}_k^{(\rho)}$, $0\leq k\leq p-1$ be any collection of functions obtained through the asymptotic formula \eqref{eq:limratios} for some subsequence $\Lambda\subset\mathbb{N}$. Let $F_{k}^{(\rho)}=c_{k}^{(\rho)} \widetilde{F}_{k}^{(\rho)}$, $0\leq k\leq p-1$, be the corresponding collection of functions obtained as in Lemma \ref{boundary}. Then, the collection $F_k^{(\rho)}$, $0\leq k\leq p-1$ is uniquely determined.
\end{lemma}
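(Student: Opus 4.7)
My plan is to suppose two collections $\{F_k^{(\rho)}\}_{k=0}^{p-1}$ and $\{\hat{F}_k^{(\rho)}\}_{k=0}^{p-1}$ both satisfy the conclusion of Lemma~\ref{boundary} together with properties i)--iii), and to show that their quotients must all equal $1$. Define $R_k:=F_k^{(\rho)}/\hat{F}_k^{(\rho)}$ for $0\leq k\leq p-1$ with the convention $R_{-1}\equiv R_p\equiv 1$. Property i) gives that each $R_k$ is holomorphic and nonvanishing on $\mathbb{C}\setminus[a_k,b_k]$. Because properties ii) and iii) prescribe the same type of behavior at $\infty$ (simple pole, simple zero, or finite positive value, with positive leading coefficient) to both $F_k^{(\rho)}$ and $\hat{F}_k^{(\rho)}$, the ratio $R_k$ extends continuously to $\infty$ with $R_k(\infty)\in(0,\infty)$.

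Next I divide the boundary equations of Lemma~\ref{boundary} for $F_k^{(\rho)}$ by those for $\hat{F}_k^{(\rho)}$; since the multiplicative factors $|\tau|^{\pm 1}$ are the same on both sides, they cancel, and I obtain the homogeneous system
\[
|R_k(\tau)|^2=|R_{k-1}(\tau)|\,|R_{k+1}(\tau)|,\qquad \tau\in[a_k,b_k],\qquad k=0,\ldots,p-1.
\]
Setting $v_k:=\log|R_k|$, I first verify that $|R_k|$ is bounded above and below by positive constants on $\overline{\mathbb{C}}\setminus[a_k,b_k]$, including on the boundary segment $[a_k,b_k]$: this uses that $F_k^{(\rho)}$ and $\hat{F}_k^{(\rho)}$ are built from Szeg\H{o} functions (possibly multiplied or divided by $\phi_k$) whose moduli have the same endpoint behavior on $[a_k,b_k]$, so the quotient stays bounded. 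Then $v_k$ is a bounded harmonic function on $\mathbb{C}\setminus[a_k,b_k]$ extending to a harmonic function on $\overline{\mathbb{C}}\setminus[a_k,b_k]$ (by removability of the singularity at $\infty$), and the boundary equation reads $2v_k=v_{k-1}+v_{k+1}$ on $[a_k,b_k]$ with $v_{-1}\equiv v_p\equiv 0$.

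The crux of the argument is then a coupling of the maximum principle with a discrete convexity estimate. Let $M_k:=\max_{[a_k,b_k]}v_k$ and $m_k:=\min_{[a_k,b_k]}v_k$. Since $[a_k,b_k]\cap[a_{k\pm 1},b_{k\pm 1}]=\emptyset$, applying the maximum principle to $v_{k\pm 1}$ on $\overline{\mathbb{C}}\setminus[a_{k\pm 1},b_{k\pm 1}]$ gives $m_{k\pm 1}\leq v_{k\pm 1}\leq M_{k\pm 1}$ on $[a_k,b_k]$, and combining this with the boundary relation yields
\[
2M_k\leq M_{k-1}+M_{k+1},\qquad 2m_k\geq m_{k-1}+m_{k+1},
\]
with $M_{-1}=M_p=m_{-1}=m_p=0$. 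Discrete convexity (resp.\ concavity) with vanishing endpoint values forces $M_k\leq 0\leq m_k$, and combined with $m_k\leq M_k$ this gives $v_k\equiv 0$ on $[a_k,b_k]$. A final application of the maximum principle shows $v_k\equiv 0$ on all of $\overline{\mathbb{C}}$, hence $|R_k|\equiv 1$; the open mapping theorem forces $R_k$ to be a constant, and since $R_k(\infty)>0$ and $|R_k(\infty)|=1$ we conclude $R_k\equiv 1$, giving $F_k^{(\rho)}=\hat{F}_k^{(\rho)}$. I expect the principal technical obstacle to be the verification that $v_k$ is uniformly bounded up to the endpoints of $[a_k,b_k]$, which depends on the explicit Szeg\H{o}-type structure of $F_k^{(\rho)}$ and $\hat{F}_k^{(\rho)}$ established in Section~\ref{preliminary}.
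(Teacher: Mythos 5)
Your proposal is correct, and its overall skeleton is the same as the paper's: form the quotient $R_k=F_k^{(\rho)}/\hat F_k^{(\rho)}$ (the paper calls it $H_k^{(\rho)}$), observe from i)--iii) that it is holomorphic, zero-free and positive at $\infty$ on $\overline{\mathbb{C}}\setminus[a_k,b_k]$, cancel the $|\tau|^{\pm1}$ factors to obtain the homogeneous system $2\log|R_k|=\log|R_{k-1}|+\log|R_{k+1}|$ on $[a_k,b_k]$, and conclude that the only solution is trivial. The one genuine difference is in the key step: the paper simply invokes Lemma 4.1 of \cite{AptLopRocha} for the triviality of the homogeneous system of harmonic functions, whereas you prove it from scratch via the maximum principle on $\overline{\mathbb{C}}\setminus[a_{k\pm1},b_{k\pm1}]$ combined with the discrete convexity/concavity of $M_k=\max_{[a_k,b_k]}v_k$ and $m_k=\min_{[a_k,b_k]}v_k$ with vanishing boundary data $M_{-1}=M_p=m_{-1}=m_p=0$. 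That argument is sound (the disjointness of consecutive intervals, which you use to evaluate $v_{k\pm1}$ on $[a_k,b_k]$, is guaranteed by the standing hypothesis $\Gamma_j\cap\Gamma_{j+1}=\emptyset$), and it makes the proof self-contained at the cost of some length; the paper's citation buys brevity. Two smaller divergences: the paper justifies that the boundary relation holds on all of $[a_k,b_k]$, including a possible endpoint at $0$, by noting that $H_k^{(\rho)}$ is up to a constant the Szeg\H{o} function of the continuous nonvanishing weight $|H_{k-1}^{(\rho)}H_{k+1}^{(\rho)}|^{-1}$ --- this is exactly the boundedness-up-to-the-boundary point you flag as the main technical obstacle, so your instinct there is right; and at the end the paper passes from $u_k\equiv 0$ to $H_k\equiv 1$ via the vanishing harmonic conjugate rather than your open-mapping argument, but both are valid.
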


\begin{proof} Let $F_{k}^{(\rho)}$, $k=0,\ldots,p-1$ and $G_{k}^{(\rho)}$, $k=0,\ldots,p-1,$ be two collections of functions satisfying the hypotheses of the Lemma. Then, by Lemma~\ref{boundary} we know that both collections satisfy properties i)-iii) (with a pole, a zero, or a finite value at $\infty$ for the same values of $k$) and the same system of boundary value equations. Let us construct a third collection of functions $H_k^{(\rho)}=F_k^{(\rho)}/G_k^{(\rho)}, k=0,\ldots,p-1$. From i)-iii) it follows that   $H_k^{(\rho)} \in \mathcal{H}(\overline{\mathbb{C}} \setminus [a_k,b_k]),  k=0,\ldots,p-1$ and the leading coefficients of these functions are positive. The collection $H_k^{(\rho)} , k=0,\ldots,p-1$ verifies the system of boundary value equations
\begin{equation}
\label{boundaryH}
\frac{|H_{k}^{(\rho)}(\tau)|^{2}}{|H_{k-1}^{(\rho)}(\tau)||H_{k+1}^{(\rho)}(\tau)|}=1 , \qquad \tau\in[a_{k},b_{k}], \qquad k = 0,\ldots,p-1,
\end{equation}
where $H_{-1}^{(\rho)} = H_p^{(\rho)} \equiv 1$. Here, we have included the point $0$ in the boundary value condition, even if it is an extreme point of $[a_k,b_k]$, because $H_{k}^{(\rho)}$ is, up to a constant factor, the Szeg\H{o} function of $|H_{k-1}^{(\rho)}H_{k+1}^{(\rho)}|^{-1}$ which is continuous and different from zero on all $[a_k,b_k]$ (recall that the Szeg\H{o} function is multiplicative).

Taking logarithm in \eqref{boundaryH}, we obtain the functional homogeneous system of equations
\begin{equation}
\label{boundarylogH}
 2 \log |H_{k}^{(\rho)}(\tau)| - \log |H_{k-1}^{(\rho)}(\tau)| - \log |H_{k+1}^{(\rho)}(\tau)|= 0 , \quad \tau\in[a_{k},b_{k}], \quad k = 0,\ldots,p-1.
\end{equation}
For each $0\leq k\leq p-1$, the function $u_k^{(\rho)}(z) = \log |H_{k}^{(\rho)}(z)|, k=0,\ldots,p-1$ is harmonic in $\overline{\mathbb{C}} \setminus [a_k,b_k]$. It is the real part of $\log H_k^{(\rho)},k=0,\ldots,p-1$ which is holomorphic in $\overline{\mathbb{C}} \setminus [a_k,b_k]$. Thus, $\log H_k^{(\rho)}$ is uniquely determined by $u_k$, taking into consideration that $H_k^{(\rho)}(\infty) > 0$ so we must find the harmonic conjugate $v_k^{(\rho)}$ of $u_k^{(\rho)}$ which equals zero at $\infty$. According to \cite[Lemma 4.1]{AptLopRocha},  the system \eqref{boundarylogH} only has the trivial solution (in the space of harmonic functions); that is, $u_k^{(\rho)} \equiv 0, k=0,\ldots,p-1.$ Consequently, $v_k^{(\rho)} \equiv 0, k=0,\ldots,p-1$ and $H^{(\rho)}_k= \exp(u^{(\rho)}_k + iv^{(\rho)}_k) \equiv 1, k=0,\ldots,p-1$. This means that $F^{(\rho)}_k \equiv G^{(\rho)}_k, k=0,\ldots,p-1$ which is what we needed to prove.
\end{proof}

\subsection{Proof of Theorem~\ref{ratioP}}

To prove the existence of limit in \eqref{limitP} it is sufficient to show that \eqref{eq:limratios} does not depend on $\Lambda$. The collection of functions $(\widetilde{F}_k^{(\rho)}), k=0,\ldots,p-1$ verifying \eqref{eq:limratios} gives rise to a collection $(F_k^{(\rho)}), k=0,\ldots,p-1$ which satisfies a system of boundary value equations as indicated in Lemma \ref{boundary} and fulfills i)-iii).

Now, according to Lemma \ref{unique} there is only one collection of functions $(F_k^{(\rho)}), k=0,\ldots,p-1$ with these properties. Since $\widetilde{F}_k^{(\rho)}$ (which must have leading coefficient equal to one) is obtained dividing $F_k^{(\rho)}$ by its leading coefficient the proof of \eqref{limitP} is settled.

Notice that
\[\frac{P_{n + p(p+1) ,k}(z)}{P_{n,k}(z)} = \prod_{\rho =0}^{p(p+1) -1} \frac{P_{n + \rho +1 ,k}(z)}{P_{n + \rho,k}(z)}. \]
On the right hand side, we have representatives of  ratios of consecutive polynomials $P_{n,k}$ for all the residue classes of $n$ modulo $p(p+1)$. Fix $\bar{\rho} \in [0:p(p+1) -1]$, using \eqref{limitP} it follows that
\[\lim_{n = \bar\rho \mod p(p+1)} \frac{P_{n + p(p+1) ,k}(z)}{P_{n,k}(z)} = \prod_{\rho =0}^{p(p+1)-1}\widetilde{F}_{k}^{(\rho)}(z),\qquad z\in\mathbb{C}\setminus[a_{k},b_{k}],\]
where the right hand does not depend on $\bar{\rho} \in [0:p(p+1) -1]$.  Therefore, \eqref{limitP2} takes place.

\subsection{Proof of Corollary~\ref{cor:ratioQnan}}

Let $\rho\in[0:p(p+1)-1]$ be fixed. Replacing $z$ by $z^{p+1}$ in \eqref{limitP} for $k=0$, we obtain
\begin{equation}\label{eq:auxratioPn0}
\lim_{\lambda\rightarrow\infty}\frac{P_{\lambda p(p+1)+\rho+1,0}(z^{p+1})}{P_{\lambda p(p+1)+\rho,0}(z^{p+1})}=\widetilde{F}_{0}^{(\rho)}(z^{p+1}),\qquad z\in\mathbb{C}\setminus\Gamma_{0}.
\end{equation}
According to \eqref{eq:decompQn} we have $Q_{n}(z)=z^{\ell} P_{n,0}(z^{p+1})$ if $n \equiv \ell \mod (p+1)$, $0\leq \ell\leq p$ (see also the lines that follow Definiton~\ref{Def:polyPnk}). Hence from \eqref{eq:auxratioPn0} we deduce that if $\rho\equiv \ell \mod (p+1)$ and $\ell\in[0:p-1]$, then
\[
\lim_{\lambda\rightarrow\infty}\frac{z^{\ell+1}P_{\lambda p(p+1)+\rho+1,0}(z^{p+1})}{z^{\ell}P_{\lambda p(p+1)+\rho,0}(z^{p+1})}=\lim_{\lambda\rightarrow 0}\frac{Q_{\lambda p(p+1)+\rho+1}(z)}{Q_{\lambda p(p+1)+\rho}(z)}=z\widetilde{F}_{0}^{(\rho)}(z^{p+1}),\qquad z\in\mathbb{C}\setminus\Gamma_{0},
\]
and if $\rho\equiv p \mod (p+1)$, then
\[
\lim_{\lambda\rightarrow\infty}\frac{P_{\lambda p(p+1)+\rho+1,0}(z^{p+1})}{z^{p} P_{\lambda p(p+1)+\rho,0}(z^{p+1})}=\lim_{\lambda\rightarrow 0}\frac{Q_{\lambda p(p+1)+\rho+1}(z)}{Q_{\lambda p(p+1)+\rho}(z)}=\frac{\widetilde{F}_{0}^{(\rho)}(z^{p+1})}{z^{p}},\qquad z\in\mathbb{C}\setminus(\Gamma_{0}\cup\{0\}).
\]
This justifies \eqref{eq:ratioQn:1}--\eqref{eq:ratioQn:2}.

In the following argument we use the cyclic notation $\widetilde{F}_{0}^{(i)}\equiv\widetilde{F}_{0}^{(i+p(p+1))}$ for any $i$. Applying \eqref{threetermrecsecondkindpequena1} for $k=0$ we get that for $\rho\not\equiv p \mod (p+1)$,
\[
a_{\lambda p(p+1)+\rho}=\frac{P_{\lambda p(p+1)+\rho,0}(z)}{P_{\lambda p(p+1)+\rho-p,0}(z)}-\frac{P_{\lambda p(p+1)+\rho+1,0}(z)}{P_{\lambda p(p+1)+\rho-p,0}(z)},\qquad z\in\mathbb{C}\setminus [a_{0},b_{0}].
\]
Letting $\lambda\rightarrow\infty$ we obtain
\[
\lim_{\lambda\rightarrow\infty} a_{\lambda p(p+1)+\rho}=a^{(\rho)}=(1-\widetilde{F}_{0}^{(\rho)}(z)) \prod_{i=\rho-p}^{\rho-1} \widetilde{F}_{0}^{(i)}(z),\qquad z\in\mathbb{C}\setminus[a_{0},b_{0}].
\]
Similarly, using \eqref{threetermrecsecondkindpequena2} we obtain that for $\rho\equiv p \mod (p+1)$,
\[
\lim_{\lambda\rightarrow\infty} a_{\lambda p(p+1)+\rho}=a^{(\rho)}=(z-\widetilde{F}_{0}^{(\rho)}(z)) \prod_{i=\rho-p}^{\rho-1} \widetilde{F}_{0}^{(i)}(z),\qquad z\in\mathbb{C}\setminus[a_{0},b_{0}].
\]
Hence \eqref{eq:limreccoeff} is justified.

Since
\[
\deg(P_{n+1,0})-\deg(P_{n,0})=Z(n+1,0)-Z(n,0)=\begin{cases}
0, & \mbox{if} \ n\not\equiv p \mod (p+1),\\
1, & \mbox{if} \ n\equiv p \mod (p+1),
\end{cases}
\]
from \eqref{limitP} we deduce that $\widetilde{F}_{0}^{(\rho)}$ has the Laurent expansion
\[
\widetilde{F}_{0}^{(\rho)}(z)=\begin{cases}
1+C_{1}^{(\rho)} z^{-1}+O(z^{-2}), & \mbox{if} \ \rho\not\equiv p \mod (p+1)\\
z+C_{0}^{(\rho)}+O(z^{-1}), & \mbox{if} \ \rho\equiv p \mod (p+1)
\end{cases}
\]
at infinity. Therefore, for $\rho\not\equiv p \mod (p+1)$, we have
\begin{align*}
a^{(\rho)}=(1-\widetilde{F}_{0}^{(\rho)}(z)) \prod_{i=\rho-p}^{\rho-1} \widetilde{F}_{0}^{(i)}(z)
=(-C_{1}^{(\rho)} z^{-1}+O(z^{-2}))(z+O(1))=-C_{1}^{(\rho)},
\end{align*}
for among the functions in $\prod_{i=\rho-p}^{\rho-1} \widetilde{F}_{0}^{(i)}(z)$ there is exactly one of the form $z+O(1)$ and the rest are of the form $1+O(z^{-1})$. Similarly one shows that if $\rho\equiv p \mod (p+1)$ then $a^{(\rho)}=-C_{0}^{(\rho)}$.

\subsection{Proof of Corollary~\ref{ratioP3}}

By Theorem \ref{ratioP}, the limits in \eqref{eq:asympgnk:1}, \eqref{eq:asympgnk:3}, and \eqref{eq:asympgnk:2} hold as $\lambda \to \infty$. Reasoning as in Subsection \ref{preliminary}, but now in connection with orthonormal polynomials, from Theorems 1 and 2 in \cite{BarCalLop} it follows that
\begin{equation}\label{limitP4+}
\lim_{\lambda \to \infty}\frac{p_{\lambda p(p+1)+\rho+1,k}(z)}{p_{\lambda p(p+1)+\rho,k}(z)}= \left(w_k^{(\rho)}\right)^{1/2} \widetilde{F}_{k}^{(\rho)}(z),\qquad z\in\mathbb{C}\setminus[a_{k},b_{k}], \qquad k=0,\ldots,p-1.
\end{equation}
The numbers $w_k^{(\rho)}, k=0,\ldots,p-1$ are defined as in Subsection \ref{preliminary} but now we know that they will not depend on the subsequence of indices $\Lambda$. Dividing \eqref{limitP4+} by \eqref{limitP}, it follows that
\[
\lim_{\lambda \to \infty}\frac{\kappa_{\lambda p(p+1)+\rho+1,k}}{\kappa_{\lambda p(p+1)+\rho,k}}= \left(w_k^{(\rho)}\right)^{1/2}, \qquad k=0,\ldots,p-1.
\]
Now, \eqref{limitP3}, \eqref{limitP4}, and \eqref{kappa} are a consequence of \eqref{eq:cs}.

From the definition of $\kappa_{\lambda p(p+1)+\rho,k}$ we have
\[K_{\lambda p(p+1)+\rho,k} = \kappa_{\lambda p(p+1)+\rho,0}\cdots \kappa_{\lambda p(p+1)+\rho,k}.\]
Therefore, \eqref{limitP3+} follows directly from \eqref{limitP3}.

Using formulas \eqref{eq:def:Hnk} and \eqref{eq:def:hnk} for two consecutive indices $n, n+1$, we have
\[\frac{\psi_{n+1,k}(z)}{\psi_{n,k}(z)} =\frac{K_{n,k-1}^2}{K_{n+1,k-1}^2}\frac{P_{n,k-1}(z)}{P_{n+1,k-1}(z)} \frac{P_{n+1,k}(z)}{P_{n,k}(z)} \frac{h_{n+1,k}(z)}{h_{n,k}(z)}.
\]
Taking $n=\lambda p(p+1)+\rho$ with $\rho$ fixed and $\lambda\rightarrow\infty$, from \eqref{limitP} and \eqref{limitP3+} we obtain
\[
\frac{K_{n,k-1}^2}{K_{n+1,k-1}^2}\frac{P_{n,k-1}(z)}{P_{n+1,k-1}(z)} \frac{P_{n+1,k}(z)}{P_{n,k}(z)}\longrightarrow \frac{1}{(\kappa_{0}^{(\rho)}\cdots\kappa_{k-1}^{(\rho)})^{2}}\frac{\widetilde{F}_{k}^{(\rho)}(z)}{\widetilde{F}_{k-1}^{(\rho)}(z)},
\]
uniformly on compact subsets of $\mathbb{C}\setminus([a_{k-1},b_{k-1}]\cup[a_{k},b_{k}])$. Now we write
\[
\frac{h_{n+1,k}(z)}{h_{n,k}(z)}=\frac{\varepsilon^{2}_{n+1,k-1}\, h_{n+1,k}(z)}{\varepsilon^{2}_{n,k-1}\, h_{n,k}(z)}
\]
and analyze the expressions
\[
\frac{\varepsilon_{n+1,k-1}}{\varepsilon_{n,k-1}},\qquad \frac{\varepsilon_{n+1,k-1}\, h_{n+1,k}(z)}{\varepsilon_{n,k-1}\, h_{n,k}(z)},
\]
separately.

Taking $n=\lambda p(p+1)+\rho$ with $\rho$ fixed, $\lambda\rightarrow\infty$, and applying \eqref{converghnk}, we easily see that
\[
\frac{\varepsilon_{\lambda p(p+1)+\rho+1,k-1}\, h_{\lambda p(p+1)+\rho+1,k}(z)}{\varepsilon_{\lambda p(p+1)+\rho,k-1}\, h_{\lambda p(p+1)+\rho,k}(z)}\longrightarrow h_{k}^{(\rho)}(z),
\]
uniformly on compact subsets of $\mathbb{C}\setminus([a_{k-1},b_{k-1}]\cup\{0\})$, where $h_{k}^{(\rho)}(z)$ is defined in \eqref{def:hkrho}.

We claim that the ratio
\begin{equation}\label{expressionepskr}
\frac{\varepsilon_{\lambda p(p+1)+\rho+1,k-1}}{\varepsilon_{\lambda p(p+1)+\rho,k-1}}=\varepsilon_{k}^{(\rho)}
\end{equation}
is independent of $\lambda$, and so it remains constant as $\lambda\rightarrow\infty$. This follows immediately from Lemma~ \ref{lem:ratioepsilon} and Lemma~\ref{lem:valuesdiffZnk}. With this we finish the proof of \eqref{second}.

Finally, the asymptotic formula \eqref{asymp:Psink} is obtained immediately from \eqref{second} and \eqref{modifiedPsink}. We leave the details to the reader.

\begin{remark}\label{rmksignepsilon}
We can give an explicit formula for the constant $\varepsilon_{k}^{(\rho)}$ in \eqref{second}. For a given $\rho\in[0:p(p+1)-1]$, let $\eta$, $\ell$ be the integers satisfying \eqref{eq:decomp:rho}. Let $\zeta(\rho,k)$ and $\theta(\rho,k)$ be the functions defined as follows:
\begin{align*}
\zeta(\rho,k) & :=\begin{cases}
\mbox{RHS of} \ \eqref{eq:values:diffZnk:1} & \mbox{if}\,\,\ell\in[0:p-1],\\
\mbox{RHS of} \ \eqref{eq:values:diffZnk:2} & \mbox{if}\,\,\ell=p,
\end{cases}\\
\theta(\rho,k) & :=\mbox{RHS of} \ \eqref{def:thetank}.\\
\end{align*}
Then from \eqref{expressionepskr} and \eqref{eq:ratioepsilon} we deduce that
\[
\varepsilon_{k}^{(\rho)}=(-1)^{\zeta(\rho, 2\lc \frac{k-1}{2}\rc)+\theta(\rho,k-1)},\qquad k=1,\ldots,p.
\]
\end{remark}

\end{document}